\newtheorem{theorem}{Theorem}[section]
\newtheorem{corollary}[theorem]{Corollary}
\newtheorem{definition}[theorem]{Definition}
\newtheorem{example}[theorem]{Example}
\newtheorem{lemma}[theorem]{Lemma}
\newtheorem{proposition}[theorem]{Proposition}
\newtheorem{remark}[theorem]{Remark}
\newenvironment{proof}[1][Proof]{\noindent \textbf{#1.} }{\  \rule{0.5em}{0.5em}}
\numberwithin{equation}{section}
\def\Tr{\mathrm{Tr}}
\def\var{\mathrm{var}}
\begin{document}

\title{Reflected Backward Stochastic Difference Equations and Optimal Stopping
Problems under $g$-expectation}
\author{Lifen An \thanks{School of Mathematics, Shandong University.}
\and Samuel N. Cohen\thanks{Mathematical Institute, University of Oxford, samuel.cohen@maths.ox.ac.uk. Research supported by the Oxford--Man Institute for Quantitative Finance.}
\and Shaolin Ji\thanks{Qilu Institute of Finance, Shandong University, jsl@sdu.edu.cn.}}
\maketitle
\date{}

\textbf{Abstract}: In this paper, we study reflected backward stochastic
difference equations (RBSDEs for short) with finitely many states in discrete
time. The general existence and uniqueness result, as well as comparison
theorems for the solutions, are established under mild assumptions. The
connections between RBSDEs and optimal stopping problems are also given. Then
we apply the obtained results to explore optimal stopping problems under
$g$-expectation. Finally, we study the pricing of American contingent claims
in our context.

\bigskip

\textbf{AMS subject classifications:} 60H10, 60G42

\bigskip

{ \textbf{Keywords}: backward stochastic difference equations
(BSDEs), reflected BSDEs (RBSDEs), optimal stopping, g-expectation, American
contingent claims \newline}

\section{Introduction}

The theory of nonlinear backward stochastic differential equations was first
introduced by Pardoux and Peng \cite{r17}. Over the past twenty years,
backward stochastic differential equations have been widely used in
mathematical finance, stochastic control and other fields. By analogy with the
equations in continuous time, Cohen and Elliott \cite{r8} consider backward
stochastic difference equations {\ (BSDEs)} on spaces related to discrete
time, finite state processes, establishing fundamental results including the
comparison theorem etc. These are studied as entities in their own right, not
as approximations to continuous BSDEs, as in \cite{r1,r2,r15, CherSta}. For
deeper discussion, the readers may refer to \cite{r8,CE,r9,CH,CH1}.

{\ The general theory of reflected backward stochastic differential equations
was studied by El Karoui et al. \cite{r10}. They considered the case where the
solution is forced to stay above a given stochastic process (called the
obstacle) and introduced an increasing process which pushes the solution to
satisfy this condition. This important theory has been applied to optimal
stopping problems (see \cite{r11}) as well as to problems in financial markets
and other related fields (see \cite{BHM,CK,H,HLM1997,HT,LX,LM,Matoussi}). For
this reason, it is interesting to explore reflected backward stochastic
difference equations (RBSDEs for short) in the framework of \cite{r8}, as well
as some applications in optimal stopping problems in discrete time. }

The RBSDE is formulated in detail in section 2. To associate the RBSDE
solution with the classical Skorohod problem, as is done in the continuous
time case (see \cite{r10}), we first prove that the Skorohod lemma remains
valid in our framework. Using the Skorohod lemma, the increasing process which
forces the solution is expressed as a supremum. Then we give the main results
of this paper including the comparison theorem and the existence and
uniqueness theorem. The proof of the comparison theorem is similar to that for
nonreflected BSDE's in \cite{r8}. Existence of solutions is established by
penalization of the constraints. Moreover we show that the solution of an
RBSDE corresponds to the value of an optimal stopping problem. We also show
that the solution of the RBSDE in which the coefficient $f$ is a concave (or
convex) function corresponds to the value function of a mixed optimal
stopping--optimal stochastic control problem.

With some limitations on the generator $g$, a BSDE can be used to define a
nonlinear expectation $\mathcal{E}^{g}[\xi]:=Y_{0}$, which is called
$g$-expectation (see \cite{r18}). A notable property of $g$-expectation is its
time consistency, namely the property that the conditional expectation
$\mathcal{E}^{g}[\xi|\mathcal{F}_{t}]$ can be well-defined. Furthermore, it
was proved that a dominated and time-consistent nonlinear expectation can be
represented as the solution of a BSDE (see \cite{CHMP, SCGenDom} in continuous
time, \cite{r8} in discrete time). So it is interesting to study optimal
stopping problems under $g$-expectation. In section 3, we first study the
$g$-expectation theory on spaces related to discrete time, finite state
processes. The Doob-Mayer decomposition theorem and optional sampling theorem
are obtained. Then we associate $g$-martingales with multiple prior
martingales which were introduced by Riedel in \cite{r22}. We finally show
that RBSDE is a convenient tool to solve some optimal stopping problems under
$g$-expectation.

In section 4, we apply the obtained results to study the pricing of American
contingent claims in an incomplete financial market.

\section{ RBSDEs}

Following \cite{r8}, we consider an underlying discrete time,
finite state process ${X}$ {which takes values in the
standard basis vectors of $\mathbb{R}^{m}$, where $m$ is the number of states of the
process $X$ . In more detail, for each
$t\in\mathcal{N}\triangleq\{0,1,2,...,T\}$, $X_{t}\in\{e_{1},...,e_{m}\}$,
where $T>0$  is a finite deterministic terminal time,
 $e_{i}=(0,0,...,0,1,0,...,0)^{\ast}\in \mathbb{R}^{m}$, and $[\cdot
]^{\ast}$ denotes vector transposition.  We note in passing that $E[X_t]$ is a vector containing the probabilities of $X_t$ being in each of its states.

Consider  a filtered probability space $(\Omega,\mathcal{F}%
,\{{\mathcal{F}_{t}}\}_{0\leq t\leq T},P),$ where $\mathcal{F}_{t}$ is the
completion of the $\sigma$-algebra generated by the process $X$
 up to time ${t}$ and $\mathcal{F}=\mathcal{F}_{T}$.

Define
\[
 M_{t}=X_{t}-E[X_{t}\mid\mathcal{F}_{t-1}],\ t=1,...,T.
\]
 $M$ is a martingale difference process taking values in $\mathbb{R}^m$, and  we define the following equivalence relation.
\begin{definition}\label{def:simM}
We define $Z^{1}\sim_{M}Z^{2}$  whenever $\Vert Z^{1}-Z^{2}\Vert_{M}^{2}=0$ where
\begin{align*}
\Vert Z\Vert_{M}^{2}  &  \triangleq E\Tr\Big[\sum_{0\leq u<T}Z_{u}^{\ast}\cdot
E[M_{u+1}M_{u+1}^{\ast}|\mathcal{F}_{u}]\cdot Z_{u}\Big]\\
&  =\sum_{0\leq u<T}\Tr E\Big[(Z_{u}^{\ast}M_{u+1})(Z_{u}^{\ast}M_{u+1})^{\ast}\Big].
\end{align*}
\end{definition}
From \cite[Theorem 1]{r8}, we have the following martingale representation theorem.

\begin{theorem}\label{MRT}
 For any $\{\mathcal{F}_t\}$-adapted $\mathbb{R}^K$-valued martingale $L$, there exists an adapted $\mathbb{R}^{K\times N}$ valued process $Z$ such that
\[L_t = L_0 + \sum_{0\leq u <t} Z_u M_{u+1},\]
Moreover, this process is unique up to equivalence $\sim_{M}$.
\end{theorem}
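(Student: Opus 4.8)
The plan is to prove the result one time-step at a time, exploiting the fact that in the finite-state setting every $\mathcal{F}_{u+1}$-measurable random variable is a linear function of $X_{u+1}$ with $\mathcal{F}_u$-measurable coefficients. First I would reduce the statement to a representation of the martingale increments. Writing $\Delta L_{u+1} = L_{u+1} - L_u$, the martingale property gives $E[\Delta L_{u+1} \mid \mathcal{F}_u] = 0$ and the telescoping identity $L_t = L_0 + \sum_{0 \leq u < t} \Delta L_{u+1}$, so it suffices to produce, for each $u$, an $\mathcal{F}_u$-measurable matrix $Z_u$ with $\Delta L_{u+1} = Z_u M_{u+1}$.

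The key step is the following structural observation. Since $\mathcal{F}_{u+1}$ is generated by $\mathcal{F}_u$ together with $X_{u+1}$, and $X_{u+1}$ takes only the values $e_1,\dots,e_m$, the $\mathcal{F}_{u+1}$-measurable vector $\Delta L_{u+1}$ can be written as $\Delta L_{u+1} = \sum_{i=1}^{m} \xi_u^i \, \mathbf{1}_{\{X_{u+1}=e_i\}}$ for suitable $\mathcal{F}_u$-measurable $\mathbb{R}^K$-valued random variables $\xi_u^1,\dots,\xi_u^m$. Because the indicator $\mathbf{1}_{\{X_{u+1}=e_i\}}$ equals the $i$-th coordinate of $X_{u+1}$, this is precisely $\Delta L_{u+1} = \Phi_u X_{u+1}$, where $\Phi_u$ is the $\mathcal{F}_u$-measurable $K\times m$ matrix whose columns are $\xi_u^1,\dots,\xi_u^m$. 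Substituting the definition $M_{u+1} = X_{u+1} - E[X_{u+1} \mid \mathcal{F}_u]$ yields $\Delta L_{u+1} = \Phi_u M_{u+1} + \Phi_u E[X_{u+1} \mid \mathcal{F}_u]$. Conditioning on $\mathcal{F}_u$ and using $E[\Delta L_{u+1}\mid\mathcal{F}_u]=0$ together with $E[M_{u+1}\mid\mathcal{F}_u]=0$ forces the second term to vanish, so $\Delta L_{u+1} = \Phi_u M_{u+1}$. Setting $Z_u := \Phi_u$ then gives the desired representation with $Z$ adapted.

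For uniqueness I would take two representing processes $Z^1, Z^2$ and compare them increment by increment: the identity $\sum_{0\leq u<t}(Z^1_u - Z^2_u)M_{u+1}=0$ for all $t$ forces $(Z^1_u - Z^2_u)M_{u+1}=0$ almost surely for every $u$, which is exactly the statement that $\Vert Z^1 - Z^2\Vert_{M}^2 = 0$, i.e. $Z^1 \sim_{M} Z^2$ in the sense of Definition \ref{def:simM}.

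The part requiring the most care is the structural observation in the second step: translating $\mathcal{F}_{u+1}$-measurability into a genuine linear dependence on $X_{u+1}$, and in particular verifying that the coefficients $\xi_u^i$ can be chosen $\mathcal{F}_u$-measurable (which holds because, on the atom $\{X_{u+1}=e_i\}$, the information in $\mathcal{F}_{u+1}$ collapses to that in $\mathcal{F}_u$). Everything else is formal: the reduction to increments is a telescoping sum, and uniqueness is a direct unwinding of the norm $\Vert\cdot\Vert_{M}$. The finiteness of the state space is exactly what makes this martingale representation elementary, since it removes any need for the density or approximation arguments required in the continuous-time setting.
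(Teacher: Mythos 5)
Your argument is correct. Note that the paper itself gives no proof of this theorem --- it is quoted directly from Theorem 1 of Cohen and Elliott \cite{r8} --- but your construction is exactly the standard one used there: since $X_{u+1}$ takes values in the basis vectors, any $\mathcal{F}_{u+1}$-measurable increment is linear in $X_{u+1}$ with $\mathcal{F}_u$-measurable matrix coefficient $\Phi_u$, the martingale property kills the $\Phi_u E[X_{u+1}\mid\mathcal{F}_u]$ term, and uniqueness up to $\sim_M$ follows by differencing consecutive partial sums and unwinding $\Vert\cdot\Vert_M$ (the residual freedom in $\xi_u^i$ on $\{P(X_{u+1}=e_i\mid\mathcal{F}_u)=0\}$ being precisely what the equivalence $\sim_M$ absorbs).
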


The general form of a backward stochastic difference equation in \cite{r8} is
for any $0\leq t\leq T,$
\begin{equation}
Y_{t}=\xi+\sum_{t\leq u<T}{f(u,Y_{u},Z_{u})}-\sum_{t\leq u<T}{Z_{u}^{\ast
}M_{u+1}},\qquad{ P-a.s.} \label{bsde-1}%
\end{equation}
where $\xi$ is an $\mathbb{R}$-valued $\mathcal{F}_{T}$-measurable terminal condition
and $f$ an adapted map $f:\Omega\times\{0,1,...,T\} \times \mathbb{R}\times
\mathbb{R}^{m}\rightarrow \mathbb{R}.$ The solution $(Y_{t},Z_{t})$ is adapted to the filtration
$\{{\mathcal{F}_{t}}\}$ and takes values in $\mathbb{R}\times \mathbb{R}^{m}$. We also assume\footnote{Note that since $X_{t}$ only takes finite states, it is clear that $L^{1}(\mathcal{F}_{t};\mathbb{R}^{m})=L^{\infty}(\mathcal{F}_{t};\mathbb{R}^{m})$.}
that $(Y_{t},Z_{t})\in L^{1}(\mathcal{F}_{t};\mathbb{R})\times L^{1}(\mathcal{F}_{t};\mathbb{R}^{m})$ for all $t,$ $\xi\in L^{1}(\mathcal{F}_{T};\mathbb{R})$ and $f(t,y,z)\in L^{1}(\mathcal{F}_{t};\mathbb{R})$ for all $t$ and $(y,z)\in \mathbb{R}\times \mathbb{R}^{m}$.

Theorem 2 in  \cite{r8} gives the following general existence result.

\begin{theorem}[BSDE existence and uniqueness]\label{thm:BSDEexist}
The BSDE (\ref{bsde-1}) has a unique adapted
solution $(Y_{t},Z_{t})$ if and only if $f$ satisfies the following two assumptions
\begin{enumerate}[(i)]
\item For any $Y$, if $Z^{1}\sim_{M}Z^{2}$, then $f(t,Y_{t},Z_{t}^{1})=f(t,Y_{t},Z_{t}^{2})$ $P$-$a.s.$
for all $t$;
\item For any $z\in \mathbb{R}^{m}$, all $t$ and $P$-almost all $\omega$,the map
\[y{ \mapsto y-f(t,y,z)}\]
 is a bijection $\mathbb{R}\rightarrow \mathbb{R}$.
\end{enumerate}
\end{theorem}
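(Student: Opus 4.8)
The plan is to prove sufficiency of (i)--(ii) by a backward induction on $t$, working with the one-step recursive form $Y_t = Y_{t+1} + f(t,Y_t,Z_t) - Z_t^\ast M_{t+1}$ subject to $Y_T=\xi$, and to prove necessity by exhibiting explicit failures of existence or uniqueness whenever one hypothesis is dropped. For the induction step, suppose $Y_{t+1}$ has been built as an $\mathcal{F}_{t+1}$-measurable integrable random variable. The key observation is that $Z_t$ is forced by $Y_{t+1}$ alone, independently of $f$: rearranging the recursion gives $Z_t^\ast M_{t+1} = Y_{t+1} - (Y_t - f(t,Y_t,Z_t))$, and since the bracketed term is $\mathcal{F}_t$-measurable while $E[M_{t+1}\mid\mathcal{F}_t]=0$, conditioning on $\mathcal{F}_t$ yields $Z_t^\ast M_{t+1} = Y_{t+1} - E[Y_{t+1}\mid\mathcal{F}_t]$. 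I would therefore apply the martingale representation Theorem \ref{MRT} to the one-step martingale $s\mapsto E[Y_{t+1}\mid\mathcal{F}_s]$ over $s\in\{t,t+1\}$ to produce an $\mathcal{F}_t$-measurable $Z_t$, unique up to $\sim_M$, realising this identity. Writing $c:=E[Y_{t+1}\mid\mathcal{F}_t]$, the remaining equation is the pointwise scalar relation $Y_t - f(t,Y_t,Z_t)=c$; assumption (ii) guarantees a unique solution $Y_t(\omega)$ for each $\omega$, and assumption (i) ensures the value $f(t,Y_t,Z_t)$, hence $Y_t$, is insensitive to the choice of $\sim_M$-representative $Z_t$, so $Y_t$ is well-defined.

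Two routine points close the step. Measurability of $Y_t$ is immediate because $\mathcal{F}_t$ is generated by the finite-state vectors $X_0,\dots,X_t$ and is thus a finite $\sigma$-algebra: on each of its atoms $c$ and $Z_t$ are constant, so $Y_t$ is constant there. Integrability is automatic for the same reason---as the footnote records, finiteness of the state space forces $L^1=L^\infty$, so $Y_t$ takes finitely many finite values. Running this recursion from $t=T-1$ down to $t=0$ produces $(Y,Z)$ with $Z$ unique up to $\sim_M$ and $Y$ unique, which is the asserted existence and uniqueness.

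For necessity I would argue contrapositively on the finite space, constructing terminal conditions $\xi$ that isolate a single time step. If (ii) fails at some $(t,z)$ by non-surjectivity of $y\mapsto y-f(t,y,z)$, choose $\xi$ so that the induced $c=E[Y_{t+1}\mid\mathcal{F}_t]$ takes an unattainable value on an atom where the representation forces that $z$, giving non-existence; if it fails by non-injectivity, the same data admit two distinct $Y_t$, breaking uniqueness. If (i) fails, then on a set of positive probability there are $\sim_M$-equivalent $Z^1,Z^2$ with $(Z^1-Z^2)^\ast M_{t+1}=0$ a.s. (since $\|Z^1-Z^2\|_M^2=0$ forces each increment to vanish) yet $f(t,Y_t,Z^1)\neq f(t,Y_t,Z^2)$, so the $Y_t$ extracted from the recursion is not determined by the data, again contradicting a unique solution.

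The main obstacle I anticipate is the necessity direction rather than the induction: one must check that on the given filtered space the atoms of $\mathcal{F}_t$ and the structure of $M_{t+1}$ are rich enough to realise prescribed pairs $(E[Y_{t+1}\mid\mathcal{F}_t],Z_t)$ through an actual terminal condition $\xi$, so that a local failure of (i) or (ii) can be promoted to a genuine global failure of existence or uniqueness. The sufficiency argument, by contrast, is a clean recursion whose only delicate feature is the separation between the determination of $Z_t$ (from $Y_{t+1}$ via martingale representation) and that of $Y_t$ (via the bijection), together with the compatibility condition (i) that renders $f(t,Y_t,Z_t)$ independent of the representative.
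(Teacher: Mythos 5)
The paper does not actually prove this theorem: it is quoted from \cite{r8} (Theorem 2 there), so there is no internal proof to compare against. Your argument is essentially the standard one from that reference: conditioning the one-step recursion on $\mathcal{F}_{t}$ decouples it into the martingale-representation problem $Z_{t}^{\ast}M_{t+1}=Y_{t+1}-E[Y_{t+1}\mid\mathcal{F}_{t}]$, which fixes $Z_{t}$ up to $\sim_{M}$, and the pointwise scalar equation $Y_{t}-f(t,Y_{t},Z_{t})=E[Y_{t+1}\mid\mathcal{F}_{t}]$, which condition (ii) inverts and condition (i) renders independent of the chosen representative, with measurability and integrability free because $\mathcal{F}_{t}$ has finitely many atoms. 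The realisability issue you flag in the necessity direction is genuine but resolvable here: assuming unique solvability on $\{t+1,\dots,T\}$, any target $\eta\in L^{1}(\mathcal{F}_{t+1};\mathbb{R})$ arises as $Y_{t+1}$ for the terminal condition obtained by running the zero-$Z$ recursion $\tilde{Y}_{s+1}=\tilde{Y}_{s}-f(s,\tilde{Y}_{s},0)$ forward from $\eta$, and taking $\eta=c\,I_{A}+z^{\ast}M_{t+1}I_{A}$ on a bad atom $A\in\mathcal{F}_{t}$ then localises the failure of (i) or (ii) into a failure of existence or uniqueness, so I see no gap.
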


We now consider RBSDEs in this setting.

\begin{definition}[Reflected BSDE]
A triple $(\xi,f,S)$ is called `standard data' for an RBSDE if

\begin{enumerate}[(i)]
\item $\xi\in L^{1}(\mathcal{F}_{T};\mathbb{R})$ ;
\item The map $f(\cdot,y,z)$ is an adapted process for any $(y,z)\in \mathbb{R}\times \mathbb{R}^{m}$;
\item The obstacle process $\{S_{t},0\leq t\leq T\}$ is real-valued, adapted and such that $S_{T}\leq\xi$ $P$-a.s.
\end{enumerate}
\end{definition}

\begin{definition}\label{FS_BSDE}

 A solution of RBSDE with standard data $(\xi,f,S)$ is a triple
$\{(Y_{t},Z_{t},K_{t})\}_{0\leq t\leq T}$ of adapted processes taking values in
$\mathbb{R}\times \mathbb{R}^{m}\times \mathbb{R}$ such that, for all $0\leq t\leq T,$
\begin{enumerate}[(i)]
 \item
\begin{equation}
{ Y_{t}=\xi+\sum_{t\leq u<T}{f(u,Y_{u},Z_{u})}+K_{T}-K_{t}%
-\sum_{t\leq u<T}{Z_{u}^{\ast}M_{u+1}}},\; P\text{-a.s.};
\label{fs-bsde1}
\end{equation}

\item ${ Y_{t}\in L^{1}(\mathcal{F}%
_{t};\mathbb{R}),Z_{t}\in L^{1}(\mathcal{F}_{t};\mathbb{R}^{m})\ and\ K_{t}\in}L^{1}%
(\mathcal{F}_{t};\mathbb{R})$;

\item $Y_{t}\geq S_{t}$  $P$-a.s.

\item $\{K_{t}\}$ is increasing in $t$, $K_{0}=0$ and
\[
\sum_{0\leq t\leq T}(Y_{t}-S_{t})(K_{t+1}-K_{t})=0,\;P-a.s.
\]
\end{enumerate}

\end{definition}

\subsection{Skorohod lemma and a priori estimate}

To determine solutions to an RBSDE, it is first useful to consider how it is connected to the Skorohod problem (see \cite{r21}) in the
discrete time case. This will then allow us to obtain a priori estimates on the behaviour of solutions to our RBSDEs.

\begin{lemma}[Skorohod problem]
\label{skorohod}Let $y$ be a real-valued function on $\{0,1,...,T\}$ such that
$y(0)\geq0$. There exists a unique pair $(v,g)$ of functions on
$\{0,1,...,T\}$ such that, for all $t\in\{0,1,...,T\},$
\begin{enumerate}[(i)]
\item $v(t)=y(t)+g(t)$;
\item  $v(t)$ is non-negative;
\item $g(t)$ is increasing, vanishing at zero and
\[
\sum_{1\leq t\leq T}v(t)(g(t)-g(t-1))=0.
\]
\end{enumerate}

The function $g$ is moreover given by
\[
g(t)=\sup_{s\leq t}(-y(s)\vee 0).
\]

\end{lemma}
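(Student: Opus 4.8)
The plan is to prove both the existence/uniqueness claim and the explicit formula simultaneously by showing that the candidate $\bar g(t):=\sup_{s\leq t}(-y(s)\vee 0)$ is forced on any solution, and then checking that the pair $(y+\bar g,\bar g)$ genuinely satisfies (i)--(iii). So I would split the argument into an existence half (the candidate works) and a uniqueness half (every solution coincides with the candidate), the latter also confirming the stated representation of $g$.

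For existence, I would take $g:=\bar g$ and $v:=y+g$ and verify the three conditions in turn. Monotonicity of $g$ is immediate, since it is a supremum over the nested sets $\{s:s\leq t\}$, and $g(0)=(-y(0))\vee 0=0$ because $y(0)\geq 0$. Nonnegativity of $v$ follows from $g(t)\geq(-y(t))\vee 0\geq -y(t)$, giving $v(t)=y(t)+g(t)\geq 0$. The flat-off condition is the only point needing care: each summand $v(t)(g(t)-g(t-1))$ is a product of nonnegative factors, so the sum vanishes iff $v(t)=0$ whenever $g(t)>g(t-1)$. But $g(t)=\max\big(g(t-1),(-y(t))\vee 0\big)$, so a strict increase forces $g(t)=-y(t)>g(t-1)\geq 0$, whence $v(t)=y(t)+g(t)=0$.

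For uniqueness, let $(v,g)$ be any solution and fix $t$. The lower bound $g(t)\geq\bar g(t)$ is easy: nonnegativity of $v$ gives $g(s)=v(s)-y(s)\geq -y(s)$, and $g(s)\geq g(0)=0$, so $g(s)\geq(-y(s))\vee 0$; monotonicity then yields $g(t)\geq g(s)\geq(-y(s))\vee 0$ for every $s\leq t$, and taking the supremum gives the claim. For the reverse inequality, if $g$ is constant on $\{0,1,\dots,t\}$ then $g(t)=g(0)=0\leq\bar g(t)$; otherwise let $r\leq t$ be the last time at which $g$ increases, so that $g(t)=g(r)$. The flat-off condition at $r$ forces $v(r)=0$, hence $g(r)=-y(r)$, and since $g(r)>g(r-1)\geq 0$ we have $-y(r)=(-y(r))\vee 0$, giving $g(t)=g(r)=(-y(r))\vee 0\leq\bar g(t)$. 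Combining the two bounds yields $g=\bar g$, and then $v=y+g$ is determined as well.

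I expect the main obstacle to be the upper bound $g(t)\leq\bar g(t)$: the lower bound uses only positivity and monotonicity, but the upper bound genuinely needs the complementarity (flat-off) relation, which I exploit through the ``last increase time'' $r$, at which the minimality of the pushing term pins $v(r)$ to zero. Everything else is routine verification.
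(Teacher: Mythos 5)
Your proof is correct, and your uniqueness argument takes a genuinely different route from the paper's. The paper proves uniqueness by a discrete integration-by-parts (Abel summation) applied to $(v(t)-\hat v(t))^2$ for two putative solutions, mirroring the classical continuous-time Skorohod argument: after telescoping and using the complementarity condition for both solutions, the squared difference is bounded above by a nonpositive quantity, forcing $v=\hat v$. You instead show directly that \emph{any} solution $g$ is sandwiched between $\bar g$ and $\bar g$: the lower bound $g\geq\bar g$ uses only nonnegativity of $v$ and monotonicity of $g$, while the upper bound uses the flat-off condition at the last increase time $r$ to pin $v(r)=0$ and hence $g(t)=g(r)=-y(r)\leq\bar g(t)$. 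Your approach is more elementary (no quadratic manipulation), exploits the discreteness of time in an essential way (the ``last increase time'' has no clean continuous analogue), and delivers the explicit formula $g=\bar g$ as an integral part of the uniqueness proof rather than as a by-product of exhibiting one particular solution. It is also worth noting that you actually verify the existence half --- in particular the complementarity condition, via the recursion $g(t)=\max\bigl(g(t-1),(-y(t))\vee 0\bigr)$ --- whereas the paper merely asserts that the candidate pair satisfies (i)--(iii). The paper's quadratic method has the advantage of generalising to settings where no closed-form candidate is available, but for this finite, one-dimensional problem your argument is arguably the more transparent of the two.
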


\begin{proof}
We first claim that the pair $(g,v)$ defined by
\[
g(t)=\sup_{s\leq t}(-y(s)\vee0),\qquad v(t)=y(t)+g(t)
\]
satisfies properties (\emph{i}) through (\emph{iii}).

To prove the uniqueness of the pair $(g,v)$, we suppose that $(\hat{g},\hat
{v})$ is another pair which satisfies (\emph{i}) through (\emph{iii}). Then
$v-\hat{v}=g-\hat{g}.$ Note that $g(0)=\hat{g}(0)=0$ and consequently
$v(0)-\hat{v}(0)=0$. Thus,
\begin{align*}
&(v(t)-\hat{v}(t))^{2}  \\
&  =\sum_{1\leq s\leq t}[(v(s)-\hat{v}(s))^{2} -(v(s-1)-\hat{v}(s-1))^{2}]\\
&  =\sum_{1\leq s\leq t}[(v(s)-\hat{v}(s))+(v(s-1)-\hat{v}(s-1))][(g(s)-\hat{g}(s))  -(g(s-1)-\hat{g}(s-1))]\\
&  =\sum_{1\leq s\leq t}(v(s)-\hat{v}(s))[(g(s)-\hat{g}(s))-(g(s-1)-\hat{g}(s-1))]\\
&  \qquad +\sum_{1\leq s\leq t}(g(s-1)-\hat{g}(s-1))(g(s)-\hat{g}(s))-\sum_{1\leq s\leq t}(g(s-1)-\hat{g}(s-1))^{2}\\
&  =-\sum_{1\leq s\leq t}v(s)(\hat{g}(s)-\hat{g}(s-1))-\sum_{1\leq s\leq t}\hat{v}(s)(g(s)-g(s-1))\\
&  \qquad -\sum_{1\leq s\leq t}(g(s-1)-\hat{g}(s-1))^{2}+\sum_{1\leq s\leq t}(g(s)-\hat{g}(s))(g(s-1)-\hat{g}(s-1))\\
&  \leq-\sum_{1\leq s\leq t}v(s)(\hat{g}(s)-\hat{g}(s-1))-\sum_{1\leq s\leq t}\hat{v}(s)(g(s)-g(s-1))\\
&  \qquad -\sum_{1\leq s\leq t}\frac{(g(s-1)-\hat{g}(s-1))^{2}}{2}+\sum_{1\leq s\leq t}\frac{(g(s)-\hat{g}(s))^{2}}{2}.
\end{align*}

As $v-\hat{v}=g-\hat{g}$, we have that
\[
\frac{(v(t)-\hat{v}(t))^{2}}{2}\leq-\sum_{1\leq s\leq t}v(s)(\hat{g}%
(s)-\hat{g}(s-1))-\sum_{1\leq s\leq t}\hat{v}(s)(g(s)-g(s-1))\leq0.
\]
Hence $v(t)=\hat{v}(t)$ and consequently $g(t)=\hat{g}(t)$.
\end{proof}

Using Lemma \ref{skorohod}, we obtain the following estimate for solutions of RBSDEs.

\begin{proposition}
Let $\{(Y_{t},Z_{t},K_{t}),0\leq t\leq T\}$ be a solution of the RBSDE
(\ref{fs-bsde1}). Then for each $t\in\{0,1,...,T\}$,
\[K_{T}-K_{t}=\underset{t\leq u\leq T}{\sup}\Big(\xi+\sum_{u\leq s<T}f(s,Y_{s},Z_{s})-\sum_{u\leq s<T}Z_{s}^{\ast}M_{s+1}-S_{u}\Big)^{-}.
\]
\end{proposition}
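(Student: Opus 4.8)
The plan is to recognise the reflection term $K_{T}-K_{\cdot}$ as the solution of a Skorohod problem and then invoke Lemma \ref{skorohod}. Since conditions (i)--(iv) of Definition \ref{FS_BSDE} hold $P$-almost surely, I would fix such an $\omega$ and argue pathwise, reading the resulting identity as an almost sure equality of random variables. The one structural mismatch to overcome is that the natural candidate for the increasing function runs \emph{backwards} in $u$, so a time reversal is needed before Lemma \ref{skorohod} can be applied.

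First I would isolate the pushing term. Evaluating (\ref{fs-bsde1}) at a generic time $u$ and subtracting $S_{u}$ gives
\[
Y_{u}-S_{u}=\Big(\xi+\sum_{u\leq s<T}f(s,Y_{s},Z_{s})-\sum_{u\leq s<T}Z_{s}^{\ast}M_{s+1}-S_{u}\Big)+(K_{T}-K_{u}).
\]
Writing $y(u)$ for the bracketed quantity and setting $v(u)=Y_{u}-S_{u}$ and $h(u)=K_{T}-K_{u}$, we have $v=y+h$ with $v\geq0$ by (iii). However $h$ is decreasing in $u$ and vanishes at $u=T$, the opposite orientation to that required by Lemma \ref{skorohod}, which is the crux of the argument.

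Next I would reverse time by setting $j=T-u$ and defining $\bar{v}(j)=v(T-j)$, $\bar{y}(j)=y(T-j)$ and $\bar{g}(j)=h(T-j)=K_{T}-K_{T-j}$ for $j\in\{0,1,\dots,T\}$. I would then verify the hypotheses of Lemma \ref{skorohod}: the decomposition $\bar{v}=\bar{y}+\bar{g}$ holds by construction; $\bar{v}(j)\geq0$ by (iii); and $\bar{g}$ is increasing with $\bar{g}(0)=K_{T}-K_{T}=0$ because $K$ is increasing. The flat-off condition $\sum_{1\leq j\leq T}\bar{v}(j)(\bar{g}(j)-\bar{g}(j-1))=0$ follows because $\bar{g}(j)-\bar{g}(j-1)=K_{(T-j)+1}-K_{T-j}$, so each summand equals $(Y_{T-j}-S_{T-j})(K_{(T-j)+1}-K_{T-j})$, which vanishes by the minimality condition in (iv) (every term there being non-negative, hence individually zero). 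Finally, the initial positivity demanded by the lemma, $\bar{y}(0)=\xi-S_{T}\geq0$, is exactly the standing assumption $S_{T}\leq\xi$ from the standard data.

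The verification complete, the uniqueness and explicit formula in Lemma \ref{skorohod} give $\bar{g}(j)=\sup_{i\leq j}\big(-\bar{y}(i)\vee0\big)=\sup_{i\leq j}(\bar{y}(i))^{-}$. Evaluating at $j=T-t$ and undoing the substitution $u=T-i$ (so that $i\leq T-t$ becomes $t\leq u\leq T$) yields $K_{T}-K_{t}=\sup_{t\leq u\leq T}(y(u))^{-}$, which is the asserted identity. I expect the only delicate point to be the bookkeeping of the time reversal, in particular checking that the flat-off and monotonicity conditions transfer with the correct indices; the verification of each individual Skorohod hypothesis is then routine.
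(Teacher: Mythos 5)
Your proposal is correct and follows essentially the same route as the paper: both identify $\big(Y_{T-j}-S_{T-j},\,K_{T}-K_{T-j}\big)$ as the solution of the time-reversed Skorohod problem with input $\bar y(j)=y(T-j)$, check the hypotheses (non-negativity from (iii), the flat-off condition from (iv), and $\bar y(0)=\xi-S_{T}\geq 0$ from the standard data), and then read off the formula from the uniqueness and explicit expression in Lemma \ref{skorohod}. Your write-up is in fact somewhat more explicit than the paper's about verifying each hypothesis after the time reversal.
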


\begin{proof}
Set
\[
y_{t}=\xi+\sum_{T-t\leq s<T}f(s,Y_{s},Z_{s})-\sum_{T-t\leq s<T}Z_{s}^{\ast
}M_{s+1}-S_{T-t}.
\]
 Then $y_{0}=\xi-S_{T}\geq0.$ Note that
\[
 Y_{T-t}(\omega)-S_{T-t}(\omega)=y_{t}+K_{T}(\omega)-K_{T-t}(\omega).
\]
From the properties of the RBSDE, we can see that
\[(v(t),g(t))=(Y_{T-t}(\omega)-S_{T-t}(\omega),K_{T}(\omega)-K_{T-t}(\omega))\qquad 0\leq t\leq T,\]
 is a solution of the above Skorohod problem.  By Lemma \ref{skorohod}, this solution is unique, and we can write
\[
K_{T}-K_{T-t}=\underset{0\leq u\leq t}{\sup}\Big(\xi+\sum_{T-u\leq s<T}f(s,Y_{s},Z_{s})-\sum_{T-u\leq s<T}Z_{s}^{\ast}M_{s+1}-S_{T-u}\Big)^{-}.
\]
This completes the proof.
\end{proof}

\subsection{Comparison theorem}

We now present a comparison theorem for RBSDEs. Given $\mathcal{F}_{t}$, let $\mathcal{Q}_{t}$ denote the $\mathcal{F}_{t}$-measurable set of indices of possible values of $X_{t+1}$,
 i.e.
\begin{equation}\label{Qtset}
\mathcal{Q}_{t}\triangleq\{i:P(X_{t+1}=e_{i}\mid\mathcal{F}_{t})>0\}.
\end{equation}

\begin{theorem}[Comparison Theorem]\label{Comparison Theorem}
Consider two RBSDEs
with standard data $(\xi^{1},f^{1},S^{1})$ and $(\xi^{2},f^{2},S^{2})$
respectively. Let $(Y^{1},Z^{1},K^{1})$ and $(Y^{2},Z^{2},K^{2})$ be the
associated solutions. Suppose the following conditions hold $P$-a.s. for all $t$
\begin{enumerate}[(i)]
 \item $\xi^{1}\geq\xi^{2}$,
 \item $f^{1}(t,Y_{t}^{2},Z_{t}^{2})\geq f^{2}(t,Y_{t}^{2},Z_{t}^{2})$
 \item $S_{t}^{1}\geq S_{t}^{2}$,
 \item $f^{1}(t,Y_{t}^{2},Z_{t}^{1})-f^{1}(t,Y_{t}^{2} ,Z_{t}^{2})\geq\underset{i\in\mathcal{Q}_{t}}{\min}\{(Z_{t}^{1}-Z_{t}^{2})^{\ast}(e_{i}-E[X_{t+1}\mid\mathcal{F}_{t}])\}$,
 \item if $Y_{t}^{1}-f^{1}(t,Y_{t}^{1},Z_{t}^{1})\geq Y_{t}^{2}-f^{1}(t,Y_{t}^{2},Z_{t}^{1})$, then $Y_{t}^{1}\geq Y_{t}^{2}$.
\end{enumerate}
Then it is true that, for all $t$,
\[
Y_{t}^{1}\geq Y_{t}^{2}\quad P\text{-a.s.}
\]

\end{theorem}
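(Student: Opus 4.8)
The plan is to argue by backward induction on $t$, from $t=T$ down to $t=0$, establishing $Y_t^1 \ge Y_t^2$ $P$-a.s. at each step. The base case $t=T$ is immediate, since (\ref{fs-bsde1}) gives $Y_T^i=\xi^i$, so $Y_T^1=\xi^1\ge\xi^2=Y_T^2$ by hypothesis (i). For the inductive step I assume $Y_{t+1}^1\ge Y_{t+1}^2$ and seek $Y_t^1\ge Y_t^2$. Writing (\ref{fs-bsde1}) at times $t$ and $t+1$ and subtracting produces the one-step form
\[
Y_t^i = Y_{t+1}^i + f^i(t,Y_t^i,Z_t^i) + (K_{t+1}^i-K_t^i) - (Z_t^i)^* M_{t+1},\qquad i=1,2,
\]
with each increment $\Delta K_{t+1}^i:=K_{t+1}^i-K_t^i\ge 0$ by monotonicity of $K^i$.

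First I would try to reduce the target to condition (v): it would suffice to show the $\mathcal{F}_t$-measurable quantity
\[
A-B := \big[Y_t^1-f^1(t,Y_t^1,Z_t^1)\big] - \big[Y_t^2-f^1(t,Y_t^2,Z_t^1)\big]
\]
is nonnegative, since then (v) delivers $Y_t^1\ge Y_t^2$. Substituting the one-step equations (together with $Y_t^2-f^2(t,Y_t^2,Z_t^2)=Y_{t+1}^2+\Delta K_{t+1}^2-(Z_t^2)^* M_{t+1}$) gives
\[
A-B = (Y_{t+1}^1-Y_{t+1}^2) + (\Delta K_{t+1}^1-\Delta K_{t+1}^2) - (Z_t^1-Z_t^2)^* M_{t+1} + f^1(t,Y_t^2,Z_t^1) - f^2(t,Y_t^2,Z_t^2).
\]
The central device, borrowed from the nonreflected comparison theorem in \cite{r8}, is that although $A-B$ is $\mathcal{F}_t$-measurable its right-hand side contains the genuinely $\mathcal{F}_{t+1}$-measurable term $(Z_t^1-Z_t^2)^* M_{t+1}$. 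I would therefore evaluate this identity on the positive-probability event $\{X_{t+1}=e_{i^*}\}$, where $i^*\in\mathcal{Q}_t$ is an $\mathcal{F}_t$-measurable selection attaining $\min_{i\in\mathcal{Q}_t}(Z_t^1-Z_t^2)^*(e_i-E[X_{t+1}\mid\mathcal{F}_t])$. On that event $M_{t+1}=e_{i^*}-E[X_{t+1}\mid\mathcal{F}_t]$, so $-(Z_t^1-Z_t^2)^* M_{t+1}$ equals minus that minimum, which hypothesis (iv) together with (ii) cancels against $f^1(t,Y_t^2,Z_t^1)-f^2(t,Y_t^2,Z_t^2)$. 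Since $A-B$ is constant on each $\mathcal{F}_t$-atom, the resulting bound $A-B\ge (Y_{t+1}^1-Y_{t+1}^2)+(\Delta K_{t+1}^1-\Delta K_{t+1}^2)$, read in the state $i^*$, propagates from the event back to the whole atom.

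The hard part is the increasing processes: the induction hypothesis makes $Y_{t+1}^1-Y_{t+1}^2\ge 0$ in every state and $\Delta K_{t+1}^1\ge 0$, but the term $-\Delta K_{t+1}^2$ has the wrong sign, so the min-state device alone does not close the argument. To handle this I would split on the $\mathcal{F}_t$-measurable event recording whether the second obstacle is active. On $\{Y_t^2=S_t^2\}$ I would bypass $A-B$ entirely and argue directly: $Y_t^1\ge S_t^1\ge S_t^2=Y_t^2$, using (iii) and the constraint $Y^1\ge S^1$. On the complementary event $\{Y_t^2>S_t^2\}$ the Skorohod complementarity condition $(Y_t^2-S_t^2)\Delta K_{t+1}^2=0$ forces $\Delta K_{t+1}^2=0$ there; then the bound above reduces to $A-B\ge (Y_{t+1}^1-Y_{t+1}^2)+\Delta K_{t+1}^1\ge 0$, and (v) yields $Y_t^1\ge Y_t^2$. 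Combining the two events completes the inductive step, and hence the theorem. I expect the only delicate bookkeeping to be the measurable selection of $i^*$ and the verification that every case distinction is genuinely $\mathcal{F}_t$-measurable, so that the pointwise implication in (v) may be applied on each piece.
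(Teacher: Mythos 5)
Your proof is correct and follows essentially the same route as the paper's: backward induction, evaluating the one-step identity in the state attaining the minimum in (iv) to dispose of the martingale increment, using the obstacle comparison (iii) and the complementarity condition to kill $K^2_{t+1}-K^2_t$, and closing with condition (v). The only cosmetic difference is that the paper argues by contradiction on $\{Y_t^1<Y_t^2\}$ (where $S_t^2\leq S_t^1\leq Y_t^1<Y_t^2$ forces the $K^2$-increment to vanish), whereas you split directly on whether $Y_t^2=S_t^2$; your version is, if anything, slightly more careful about the $\mathcal{F}_t$-measurability of the propagation from the event $\{X_{t+1}=e_{i^*}\}$ back to the whole atom.
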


\begin{proof}
It is clear that $Y_{T}^{1}-Y_{T}^{2}=\xi^{1}-\xi^{2}\geq0$ $P$-a.s.  For an arbitrary $0\leq t<T$, suppose that $Y_{t+1}^{1}-Y_{t+1}^{2}\geq0$ $ P$-a.s. We then have
\begin{equation}\label{CT-bsde1}
\begin{split}
 Y_{t+1}^{1}-Y_{t+1}^{2} &=  Y_{t}^{1}-Y_{t}^{2}-f^{1}(t,Y_{t}^{1},Z_{t}^{1})+f^{2}(t,Y_{t}^{2},Z_{t}^{2})+(Z_{t}^{1}-Z_{t}^{2})^{\ast}M_{t+1}\\
& \qquad -(K_{t+1}^{1}-K_{t}^{1})+(K_{t+1}^{2}-K_{t}^{2})\\
&\geq  0.
\end{split}
\end{equation}
Since $M_{t+1}=X_{t+1}-E[X_{t+1}\mid\mathcal{F}_{t}]$ and $X_{t+1}$ almost surely takes values in $\mathcal{Q}_{t}$,
\[
\begin{split}
& Y_{t}^{1}-Y_{t}^{2}-(K_{t+1}^{1}-K_{t}^{1})+(K_{t+1}^{2}-K_{t}^{2})\\
&\geq  f^{1}(t,Y_{t}^{1},Z_{t}^{1})-f^{2}(t,Y_{t}^{2},Z_{t}^{2})-\underset{i\in\mathcal{Q}_{t}}{\min}\{(Z_{t}^{1}-Z_{t}^{2})^{\ast}
(e_{i}-E[X_{t+1}\mid\mathcal{F}_{t})\}.
\end{split}
\]
By assumptions (ii) and (iv), we obtain
\begin{equation}
\begin{split}
& Y_{t}^{1}-Y_{t}^{2}-f^{1}(t,Y_{t}^{1},Z_{t}^{1})+f^{1}(t,Y_{t}^{2},Z_{t}^{1})-(K_{t+1}^{1}-K_{t}^{1})+(K_{t+1}^{2}-K_{t}^{2})\\
& \geq  f^{1}(t,Y_{t}^{2},Z_{t}^{2})-f^{2}(t,Y_{t}^{2},Z_{t}^{2})+f^{1}(t,Y_{t}^{2},Z_{t}^{1})-f^{1}(t,Y_{t}^{2},Z_{t}^{2})\\
& \qquad -\underset{i\in\mathcal{Q}_{t}}{\min}\{(Z_{t}^{1}-Z_{t}^{2})^{\ast}(e_{i}-E[X_{t+1}\mid\mathcal{F}_{t}])\}\\
&\geq  0.
\end{split}
\label{CT-bsde2}
\end{equation}

Set
\[
\mathcal{A}\triangleq\{ \omega\mid Y_{t}^{1}(\omega)<Y_{t}^{2}(\omega)\}.
\]
We know that $S_{t}^{2}\leq S_{t}^{1}\leq Y_{t}^{1}<Y_{t}^{2}$ on
$\mathcal{A}$, which yields that $K_{t+1}^{2}-K_{t}^{2}=0$ on
$\mathcal{A}$. Therefore,
\[
Y_{t}^{1}-Y_{t}^{2}-f^{1}(t,Y_{t}^{1},Z_{t}^{1})+f^{1}(t,Y_{t}^{2},Z_{t}%
^{1})\geq0\; \text{on}\ \mathcal{A}.
\]
But, by assumption (v), the above inequality implies
$Y_{t}^{1}\geq Y_{t}^{2}$ on $\mathcal{A}$. Thus, we deduce that $P(\mathcal{A})=0$ and
\[
Y_{t}^{1}\geq Y_{t}^{2}\quad P-a.s.
\]
This completes the proof.
\end{proof}

\begin{remark}
If the map $y\mapsto y-f(\omega, t,y,z)$ is strictly increasing in $y$ for all $t$ and $z$ and $P$-almost all $\omega$, then assumption (v) holds.
\end{remark}

\begin{corollary}
\label{Corollary Comparison Theorem} Suppose the assumptions of Theorem \ref%
{Comparison Theorem} hold. Set $t\in \{0,1,...,T\}$. If we also know that $%
Y_{s}^{1}=Y_{s}^{2}$ for all $s\in \{0,1,...,t\}$, then $K_{s}^{1}\leq
K_{s}^{2}$ $P$-a.s. for all $s\in \{0,1,...,(t+1)\wedge T\}$ and $%
K_{s}^{1}-K_{s}^{2}$ is decreasing in $s$. Moreover, if $\xi ^{1}=\xi ^{2}$
and $f^{1}=f^{2}$ $P$-a.s., then $K_{t}^{1}=K_{t}^{2}$ $P$-a.s. for all $%
t\in \{0,1,...,T\}$.
\end{corollary}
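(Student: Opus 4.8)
The plan is to read off the behaviour of the two increasing processes directly from the one-step inequality (\ref{CT-bsde2}), which was already obtained in the course of proving Theorem \ref{Comparison Theorem}. Since the conclusion of that theorem guarantees $Y^1_{s+1}\geq Y^2_{s+1}$ for every index, the inductive hypothesis underlying (\ref{CT-bsde2}) is met at each step, so that inequality is available for all $s\in\{0,1,\dots,T-1\}$ without any further work.

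First I would specialise (\ref{CT-bsde2}) to an index $s\in\{0,\dots,t\}$. There $Y^1_s=Y^2_s$, so the two generator terms cancel, $-f^1(s,Y^1_s,Z^1_s)+f^1(s,Y^2_s,Z^1_s)=0$, and (\ref{CT-bsde2}) collapses to
\[
(K^2_{s+1}-K^2_s)-(K^1_{s+1}-K^1_s)\geq 0,
\]
i.e. the increment of $K^1$ is dominated by that of $K^2$ on $\{0,\dots,t\}$. Consequently $s\mapsto K^1_s-K^2_s$ has non-positive increments, hence is decreasing, on $\{0,\dots,(t+1)\wedge T\}$; since $K^1_0-K^2_0=0$, it stays non-positive there, which is exactly $K^1_s\leq K^2_s$. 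This settles the first two assertions.

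For the last assertion I would take the stronger case in which the $Y$'s agree throughout (i.e.\ $t=T$), add $\xi^1=\xi^2$ and $f^1=f^2$, and run the same argument with the two solutions interchanged. With $Y^1=Y^2$ everywhere the reversed form of hypothesis (v) is trivially satisfied (its premise becomes an equality that holds, and its conclusion $Y^2_s\geq Y^1_s$ reads $Y_s\geq Y_s$), while hypotheses (i) and (ii) hold with equality. The reversed version of (\ref{CT-bsde2}) then gives $(K^1_{s+1}-K^1_s)-(K^2_{s+1}-K^2_s)\geq 0$, the opposite increment inequality; combined with the bound just established this forces $K^1_{s+1}-K^1_s=K^2_{s+1}-K^2_s$ for all $s$, and since both processes start at $0$ we conclude $K^1_t=K^2_t$ for every $t$.

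The step that needs care — and the only real obstacle — is justifying the reversed instance of hypothesis (iv). The comparison assumptions are stated for the ordered pair $(Z^1,Z^2)$, whereas the interchange requires the analogous bound for $(Z^2,Z^1)$. Because $f^1=f^2=:f$ and $Y^1=Y^2$, both instances are inequalities on the single generator $f$ of the form $f(s,Y_s,z^1)-f(s,Y_s,z^2)\geq \min_{i\in\mathcal{Q}_s}\{(z^1-z^2)^{\ast}(e_i-E[X_{s+1}\mid\mathcal{F}_s])\}$, and swapping $z^1\leftrightarrow z^2$ sends one into the other. Thus the reversed bound is available precisely when this $z$-regularity of $f$ is understood to hold for all pairs of arguments, as is standard; I would make this symmetry explicit before invoking the interchanged inequality, since it is what legitimises passing from $K^1\leq K^2$ to the equality $K^1=K^2$.
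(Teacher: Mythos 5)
Your argument is correct and follows essentially the same route as the paper: read the increment inequality $K^1_{s+1}-K^1_s\leq K^2_{s+1}-K^2_s$ off (\ref{CT-bsde2}) using $Y^1_s=Y^2_s$, telescope from $K^1_0=K^2_0=0$, and obtain the reverse bound by interchanging the roles of the two solutions when the data coincide. The only difference is that you make explicit the point the paper leaves at ``it is easy to see,'' namely that the interchange needs hypothesis (iv) in the swapped order $(Z^2,Z^1)$, which holds once that condition is read as a property of $f$ for all pairs of arguments rather than for the single ordered pair --- a reasonable and arguably necessary clarification.
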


\begin{proof}
By (\ref{CT-bsde2}), we have
\begin{equation*}
K_{s+1}^{1}-K_{s}^{1}\leq K_{s+1}^{2}-K_{s}^{2}.
\end{equation*}%
Then
\begin{equation*}
K_{s+1}^{1}-K_{s+1}^{2}\leq K_{s}^{1}-K_{s}^{2},
\end{equation*}%
that is, $K_{s}^{1}-K_{s}^{2}$ is decreasing in $s.$ Since $%
K_{0}^{1}=K_{0}^{2}=0$, we obtain that $K_{s}^{1}\leq K_{s}^{2}$ $P$-a.s.

Moreover, if we also have $\xi^{1}=\xi^{2}$ and $f^{1}=f^{2}$ $P$-a.s., then it is easy to see that
\[
K_{t}^{2}\leq K_{t}^{1}\quad P\text{-a.s.}
\]
which completes the proof.
\end{proof}

The following example shows that Theorem \ref{Comparison Theorem}
fails when assumption (iv) does not hold.

\begin{example}
For simplicity, suppose $T=1$. Consider two RBSDEs with standard data
$(\xi^{1},f^{1},S^{1})$ and $(\xi^{2},f^{2},S^{2})$ respectively which satisfy
the assumptions of Theorem \ref{Existence Uniqueness} in the following
section. Let $\xi^{1}=\xi^{2}$, $f^{1}=f^{2}=f$ and $S^{1}=S^{2}$ and the map
$y-f(y,z)$ be strictly increasing in $y$. By Theorem
\ref{Existence Uniqueness}, we have $Y_{0}^{1}=Y_{0}^{2}$, $K_{0}^{1}%
=K_{0}^{2}$ and $K_{1}^{1}=K_{1}^{2}$ $P-a.s.$

Suppose that assumption (iv) of Theorem \ref{Comparison Theorem} does
not hold. In particular, we have
\[
f(0,Y_{0}^{2},Z_{0}^{1})-f(0,Y_{0}^{2},Z_{0}^{2})<\underset{i\in
\mathcal{Q}_{t}}{\min}\{(Z_{0}^{1}-Z_{0}^{2})^{\ast}(e_{i}-E[X_{1}%
\mid\mathcal{F}_{0}])\}.
\]
Then we have
\[
\begin{split}
0&=  Y_{1}^{1}-Y_{1}^{2}\\
&= Y_{0}^{1}-Y_{0}^{2}-f(0,Y_{0}^{1},Z_{0}^{1})+f(0,Y_{0}^{2},Z_{0}%
^{2})+(Z_{0}^{1}-Z_{0}^{2})^{\ast}M_{1}\\
&\qquad -(K_{1}^{1}-K_{0}^{1})+(K_{1}^{2}-K_{0}^{2})\\
&>  Y_{0}^{1}-Y_{0}^{2}-f(0,Y_{0}^{1},Z_{0}^{1})+f(0,Y_{0}^{2},Z_{0}%
^{1})-(K_{1}^{1}-K_{0}^{1})+(K_{1}^{2}-K_{0}^{2}).
\end{split}
\]
{ It follows that}%
\[\begin{split}
0&=  (K_{1}^{1}-K_{0}^{1})-(K_{1}^{2}-K_{0}^{2})\\
&>  { Y_{0}^{1}-Y_{0}^{2}-f(0,Y_{0}^{1},Z_{0}^{1})+f(0,Y_{0}^{2},Z_{0}^{1}).}
\end{split}
\]
As the map $y\mapsto y-f(y,z)$ is strictly increasing, we deduce $Y_{0}^{1}<Y_{0}^{2}$, contradicting the conclusion of Theorem
\ref{Comparison Theorem}.
\end{example}

\subsection{Existence and uniqueness}
In this subsection, we will explore the existence and uniqueness
of solutions of RBSDE basing on approximation via penalization in \cite{r10}
as well as the comparison theorem obtained in \cite{r8}.

First, we recall the comparison theorem in \cite{r8}.

\begin{theorem}
\label{Comparison Theorem1} Consider two BSDEs (\ref{bsde-1}) with standard
data $(\xi^{1},f^{1})$ and $(\xi^{2},f^{2})$ respectively. Suppose
$(Y^{1},Z^{1})$ and $(Y^{2},Z^{2})$ are the associated solutions, and the
following conditions also hold $P$-a.s. for all $t\in\{0,1,...,T\}$,
\begin{enumerate}[(i)]
 \item $\xi^{1}\geq\xi^{2}$,
 \item $f^{1}(t,Y_{t}^{2},Z_{t}^{2})\geq f^{2}(t,Y_{t}%
^{2},Z_{t}^{2})$,
\item $f^{1}(t,Y_{t}^{2},Z_{t}^{1})-f^{1}(t,Y_{t}%
^{2},Z_{t}^{2})\geq\underset{i\in\mathcal{Q}_{t}}{\min}\{[Z_{t}^{1}-Z_{t}%
^{2}]^{\ast}(e_{i}-E[X_{t+1}\mid\mathcal{F}_{t}])\}$,
\item if $Y_{t}^{1}-f^{1}(t,Y_{t}^{1},Z_{t}^{1})\geq
Y_{t}^{2}-f^{1}(t,Y_{t}^{2},Z_{t}^{1})$ then $Y_{t}^{1}\geq Y_{t}^{2}.$
\end{enumerate}
Then it is true that, for all $t$,
\[
Y_{t}^{1}\geq Y_{t}^{2}\quad P\text{-a.s.}
\]
\end{theorem}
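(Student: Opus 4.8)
The plan is to argue by backward induction on $t$, following the template of the RBSDE comparison theorem (Theorem \ref{Comparison Theorem}) but with the increasing processes $K^{1},K^{2}$ absent. Indeed, the present statement is the special case obtained by driving an RBSDE's obstacle so low that it never binds (so that $K\equiv0$), and the argument is correspondingly shorter. First I would dispose of the base case: at $t=T$ we have $Y_{T}^{1}-Y_{T}^{2}=\xi^{1}-\xi^{2}\geq0$ $P$-a.s. by (i). For the inductive step I assume $Y_{t+1}^{1}-Y_{t+1}^{2}\geq0$ $P$-a.s. and aim to propagate this to time $t$.

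Rewriting (\ref{bsde-1}) in its one-step form for each equation and subtracting gives the analogue of (\ref{CT-bsde1}) without the $K$-terms:
\[
Y_{t+1}^{1}-Y_{t+1}^{2}=Y_{t}^{1}-Y_{t}^{2}-f^{1}(t,Y_{t}^{1},Z_{t}^{1})+f^{2}(t,Y_{t}^{2},Z_{t}^{2})+(Z_{t}^{1}-Z_{t}^{2})^{\ast}M_{t+1}.
\]
The key step, which I expect to be the main obstacle, is to extract usable information from the hypothesis that the left-hand side is nonnegative $P$-a.s. Every term on the right except $M_{t+1}=X_{t+1}-E[X_{t+1}\mid\mathcal{F}_{t}]$ is $\mathcal{F}_{t}$-measurable, and conditionally on $\mathcal{F}_{t}$ the variable $X_{t+1}$ realizes each $e_{i}$ with $i\in\mathcal{Q}_{t}$ with positive probability; hence the a.s. inequality must in fact hold simultaneously for every admissible realization of $X_{t+1}$. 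Minimizing over $i\in\mathcal{Q}_{t}$ therefore yields
\[
Y_{t}^{1}-Y_{t}^{2}-f^{1}(t,Y_{t}^{1},Z_{t}^{1})+f^{2}(t,Y_{t}^{2},Z_{t}^{2})+\underset{i\in\mathcal{Q}_{t}}{\min}\{(Z_{t}^{1}-Z_{t}^{2})^{\ast}(e_{i}-E[X_{t+1}\mid\mathcal{F}_{t}])\}\geq0.
\]
This discrete minimum trick plays the role occupied by the linearization/change-of-measure argument in the continuous-time comparison theorem, and carefully justifying that the a.s. inequality transfers to every state in $\mathcal{Q}_{t}$ is the crux of the proof.

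Finally I would add and subtract $f^{1}(t,Y_{t}^{2},Z_{t}^{1})$ and invoke (ii) and (iii): these combine to show
\[
f^{1}(t,Y_{t}^{2},Z_{t}^{1})-f^{2}(t,Y_{t}^{2},Z_{t}^{2})-\underset{i\in\mathcal{Q}_{t}}{\min}\{(Z_{t}^{1}-Z_{t}^{2})^{\ast}(e_{i}-E[X_{t+1}\mid\mathcal{F}_{t}])\}\geq0,
\]
since by (iii) the first difference dominates the minimum and by (ii) the cross term $f^{1}(t,Y_{t}^{2},Z_{t}^{2})-f^{2}(t,Y_{t}^{2},Z_{t}^{2})$ is nonnegative. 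Substituting back gives $Y_{t}^{1}-f^{1}(t,Y_{t}^{1},Z_{t}^{1})\geq Y_{t}^{2}-f^{1}(t,Y_{t}^{2},Z_{t}^{1})$ $P$-a.s. Applying assumption (iv) $\omega$-by-$\omega$ then delivers $Y_{t}^{1}\geq Y_{t}^{2}$ $P$-a.s., which closes the induction. Unlike in Theorem \ref{Comparison Theorem}, no auxiliary set $\mathcal{A}$ and no argument about where the obstacle binds is required, precisely because the $K$-terms have vanished.
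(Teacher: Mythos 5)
Your proof is correct and takes essentially the same approach as the paper: although the paper only cites this theorem from \cite{r8} rather than reproving it, its own proof of the reflected analogue (Theorem \ref{Comparison Theorem}) is exactly your backward induction with the state-by-state minimization over $\mathcal{Q}_t$ followed by assumptions (ii), (iii) and (iv), plus the extra handling of the $K$ processes that you correctly observe is not needed here.
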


\begin{corollary}
\label{Comparison Theorem11} Suppose Theorem \ref{Comparison Theorem1} holds, and furthermore
\begin{itemize}
\item at least one of inequalities (i) and (ii) is strict,
 \item inequality (iii) is strict unless both sides are zero, and
 \item the map $y\mapsto y-f(t, y, Z^1)$ is strictly increasing (guaranteeing inequality (iv)).
\end{itemize}
Then we have $Y_{t}^{1}>Y_{t}^{2}$ $P$-a.s. for all $t$.
\end{corollary}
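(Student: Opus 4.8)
The plan is to run the backward induction from the proof of Theorem~\ref{Comparison Theorem}, specialised to BSDEs (i.e.\ with $K^1\equiv K^2\equiv 0$), but to carry the strict inequalities through. Theorem~\ref{Comparison Theorem1} already gives $Y^1_t\ge Y^2_t$ $P$-a.s.\ for every $t$, so the only task is to exclude equality. Setting $g_t(y):=y-f^1(t,y,Z^1_t)$, which is strictly increasing by hypothesis, it suffices to produce $g_t(Y^1_t)>g_t(Y^2_t)$ at each time, since strict monotonicity of $g_t$ then yields $Y^1_t>Y^2_t$.

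For the inductive step I would reuse the inequality chain leading to (\ref{CT-bsde2}). Replacing $M_{t+1}$ by $e_i-E[X_{t+1}\mid\mathcal{F}_t]$ on $\{X_{t+1}=e_i\}$ and minimising over the finite index set $\mathcal{Q}_t$, the dynamics together with the inductive hypothesis $Y^1_{t+1}\ge Y^2_{t+1}$ give
\[
g_t(Y^1_t)-g_t(Y^2_t)\ \ge\ \Big[f^1(t,Y^2_t,Z^1_t)-f^1(t,Y^2_t,Z^2_t)-m_t\Big]+\Big[f^1(t,Y^2_t,Z^2_t)-f^2(t,Y^2_t,Z^2_t)\Big],
\]
where $m_t:=\min_{i\in\mathcal{Q}_t}\{(Z^1_t-Z^2_t)^{\ast}(e_i-E[X_{t+1}\mid\mathcal{F}_t])\}$. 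The first bracket is nonnegative by assumption (iii) and the second by assumption (ii).

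The key is then to track two complementary sources of strict positivity on the right-hand side. If (ii) is strict at $t$ the second bracket is positive, and if (iii) is strict there (its two sides not both zero) the first bracket is positive; either one forces $g_t(Y^1_t)>g_t(Y^2_t)$ using only the weak hypothesis $Y^1_{t+1}\ge Y^2_{t+1}$. The second, propagating, source is the observation that lets a single strict inequality descend the whole way: if $Y^1_{t+1}>Y^2_{t+1}$ $P$-a.s., then for each $i\in\mathcal{Q}_t$ the $\mathcal{F}_t$-measurable quantity $R_i:=Y^1_t-Y^2_t-f^1(t,Y^1_t,Z^1_t)+f^2(t,Y^2_t,Z^2_t)+(Z^1_t-Z^2_t)^{\ast}(e_i-E[X_{t+1}\mid\mathcal{F}_t])$ coincides with $Y^1_{t+1}-Y^2_{t+1}>0$ on $\{X_{t+1}=e_i\}$, an event of positive conditional probability wherever $i\in\mathcal{Q}_t$; since $\mathcal{Q}_t$ is finite this forces $\min_{i\in\mathcal{Q}_t}R_i>0$ $P$-a.s., and hence again $g_t(Y^1_t)>g_t(Y^2_t)$. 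Finiteness of the state space is precisely what legitimises the step ``a minimum of strictly positive terms is strictly positive,'' and making this measure-theoretic point rigorous, via $P(X_{t+1}=e_i\mid\mathcal{F}_t)>0$ on $\{i\in\mathcal{Q}_t\}$ together with $\mathcal{F}_t$-measurability of each $R_i$, is where I would be most careful.

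Finally I would assemble the induction. The base case is $Y^1_T-Y^2_T=\xi^1-\xi^2$, which is strict when (i) is strict; the propagation mechanism then carries strictness down to every $t$. When strictness instead originates from (ii), the injection mechanism produces $Y^1_t>Y^2_t$ at an interior time, which again propagates backward. The one delicate piece of bookkeeping is ensuring that a strict inequality is genuinely present somewhere (at $T$ through (i), or at an interior time through (ii) or (iii)) before it can be propagated; organising the induction so that the combined hypothesis ``at least one of (i),(ii) strict and (iii) strict unless both sides vanish'' forces strictness at every $t$ is the main obstacle, rather than any individual estimate.
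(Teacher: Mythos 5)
Your argument is correct and takes essentially the same route as the paper: the paper's proof invokes Theorem \ref{Comparison Theorem1} to get $Y^1_t\geq Y^2_t$ and then asserts, ``by the same arguments as in Theorem \ref{Comparison Theorem},'' the strict inequality $Y_{t}^{1}-Y_{t}^{2}-f^{1}(t,Y_{t}^{1},Z_{t}^{1})+f^{1}(t,Y_{t}^{2},Z_{t}^{1})>0$ --- precisely your $g_t(Y^1_t)>g_t(Y^2_t)$ --- before concluding via strict monotonicity of $y\mapsto y-f^1(t,y,Z^1_t)$. Your write-up simply supplies the details the paper leaves implicit, namely the finite-state justification that the minimum over $i\in\mathcal{Q}_t$ of strictly positive $\mathcal{F}_t$-measurable terms is strictly positive, and the bookkeeping of where strictness originates (at $T$ via (i), or at each step via (ii)/(iii)) and how it propagates backward.
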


\begin{proof}
By Theorem \ref{Comparison Theorem1}, we have $Y_{t}^{1}\geq Y_{t}^{2}$
$P$-a.s. Then, by the same arguments as in Theorem \ref{Comparison Theorem},
we obtain
\[
Y_{t}^{1}-Y_{t}^{2}-f^{1}(t,Y_{t}^{1},Z_{t}^{1})+f^{1}(t,Y_{t}^{2},Z_{t}^{1})>0.
\]
It follows that $Y_{t}^{1}>Y_{t}^{2}$ $P$-a.s. This completes the proof.
\end{proof}

\begin{theorem}
\label{Existence Uniqueness} Consider a RBSDE {(\ref{fs-bsde1})}
with standard data $(\xi,f,S)$. The map $f$ satisfies the following two assumptions $P$-a.s. for all $t$
\begin{enumerate}[(i)]
\item  For any $Y$, if $Z^{1}\sim_{M}Z^{2}$, then $f(t,Y_{t},Z_{t}^{1})=f(t,Y_{t},Z_{t}^{2})$
\item For any $z\in R^{m}$ the map $y\mapsto y-f(t,y,z)$ is strictly increasing and continuous in $y$
\end{enumerate}
Then there exists an adapted solution $(Y,Z,K)$ for the RBSDE
(\ref{fs-bsde1}). Moreover, this solution is unique up to indistinguishability
for $Y$ and equivalence $\sim_{M}$ for $Z$.
\end{theorem}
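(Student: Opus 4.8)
The plan is to prove existence by penalization, following the continuous-time scheme of El Karoui et al. \cite{r10}, and to deduce uniqueness from the comparison theorem. For each $n\in\mathbb{N}$, I would introduce the penalized (nonreflected) BSDE with generator
\[
f_{n}(t,y,z)\triangleq f(t,y,z)+n(y-S_{t})^{-},
\]
which has a unique solution $(Y^{n},Z^{n})$ by Theorem \ref{thm:BSDEexist}: assumption (i) of the present theorem gives hypothesis (i) there, while the map $y\mapsto y-f_{n}(t,y,z)=y-f(t,y,z)-n(y-S_{t})^{-}$ is a strictly increasing continuous bijection of $\mathbb{R}$ (it is the sum of the strictly increasing bijection $y\mapsto y-f(t,y,z)$ from hypothesis (ii) and the nondecreasing continuous function $-n(y-S_t)^-$), so hypothesis (ii) there holds as well. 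I would then set $K_{t}^{n}\triangleq\sum_{0\leq u<t}n(Y_{u}^{n}-S_{u})^{-}$, so that $(Y^{n},Z^{n},K^{n})$ satisfies the RBSDE equation \eqref{fs-bsde1} with $K^n$ increasing and $K_0^n=0$, but with the constraint $Y^n\geq S$ and the flat-off (Skorohod) condition only in the limit.

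Next I would establish monotonicity of the approximating sequence in $n$. Since $f_{n}\leq f_{n+1}$ pointwise (as $n(y-S_t)^-\le (n+1)(y-S_t)^-$), the comparison theorem for nonreflected BSDEs, Theorem \ref{Comparison Theorem1}, gives $Y_{t}^{n}\leq Y_{t}^{n+1}$ for all $t$, $P$-a.s.; here hypothesis (iii) of that theorem is supplied by our assumption (i) (both sides vanish since $f$ depends on $z$ only through $\sim_M$-classes), and hypothesis (iv) by the strict monotonicity in (ii). Thus $Y^{n}$ is nondecreasing in $n$. The crucial analytic step is then a uniform a priori bound on $Y^{n}$ (and on $K_T^n$) that is independent of $n$. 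Because the state space is finite, every adapted process is bounded in $L^\infty$, and I expect to obtain such a bound by comparing $Y^n$ against the solution of the BSDE with the same generator $f$ but no penalization (using that $S_T\le\xi$), together with an estimate of $E[K_T^n]$ obtained by taking expectations in \eqref{fs-bsde1} at $t=0$ and using that the $Z^n$-term is a martingale increment. This uniform bound, combined with monotonicity, yields a pointwise (hence, by finiteness of the state space, $L^1$ and indeed $L^\infty$) limit $Y_t\triangleq\lim_n Y_t^n$.

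Having $Y$, I would recover $Z$ and $K$ as follows. The martingale representation theorem, Theorem \ref{MRT}, applied to the limit process (or a weak/monotone limit of the martingales $\sum Z^n_u M_{u+1}$) produces an adapted $Z$ with $Z^{n}\to Z$ in the $\|\cdot\|_M$ sense, after which $K_{T}-K_{t}$ is forced by the equation \eqref{fs-bsde1} and is nondecreasing as an increasing limit of the $K^n$. I would then verify the obstacle constraint $Y_t\ge S_t$ (from $Y_t^n\ge -n^{-1}\cdot(\text{bounded})\to$ the penalty term vanishing, i.e.\ $E[n(Y^n-S)^-]$ stays bounded forces $(Y-S)^-=0$) and the Skorohod flat-off condition $\sum_t(Y_t-S_t)(K_{t+1}-K_t)=0$, the latter being the discrete-time analogue handled exactly as in Lemma \ref{skorohod}; indeed the a priori representation of $K$ as a running supremum from the Proposition above makes the flat-off condition automatic once convergence is in place. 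Uniqueness is the easy part: if $(Y,Z,K)$ and $(Y',Z',K')$ are two solutions, applying the RBSDE comparison theorem, Theorem \ref{Comparison Theorem}, in both directions (the data being identical, so its hypotheses (i)--(iv) hold with equality and (v) from strict monotonicity) gives $Y=Y'$; then $Z\sim_M Z'$ follows from the $\sim_M$-uniqueness in the martingale representation theorem, and $K=K'$ follows since $K_T-K_t$ is determined by $Y,Z$ through \eqref{fs-bsde1}. The main obstacle I anticipate is the uniform, $n$-independent estimate on $\{Y^n,K^n_T\}$ that legitimizes passing to the limit, and verifying that the limiting $K$ indeed satisfies the flat-off condition rather than merely being increasing; the finiteness of the state space should make both manageable, replacing the delicate $L^2$ and weak-convergence arguments of the continuous-time proof by elementary bounded-convergence arguments.
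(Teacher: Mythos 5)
Your overall strategy (penalization, comparison, monotone limit) is the paper's, but there is a genuine gap at the step where you invoke the comparison theorem, and it is not a technicality. To get $Y^n_t\leq Y^{n+1}_t$ from Theorem \ref{Comparison Theorem1} you must verify its hypothesis (iii), namely $f_{n+1}(t,Y^n_t,Z^{n+1}_t)-f_{n+1}(t,Y^n_t,Z^{n}_t)\geq\min_{i\in\mathcal{Q}_t}\{(Z^{n+1}_t-Z^n_t)^*(e_i-E[X_{t+1}\mid\mathcal{F}_t])\}$. You assert both sides vanish because $f$ is $\sim_M$-invariant, but in your multi-step penalization $Z^n$ and $Z^{n+1}$ are \emph{not} $\sim_M$-equivalent: $Z^n_t$ represents the martingale part of $Y^n_{t+1}$, which genuinely depends on $n$ for $t+1<T$. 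So hypothesis (iii) becomes a real constraint on the $z$-dependence of $f$ (a balance/monotonicity condition in $z$), and Theorem \ref{Existence Uniqueness} does not assume any such condition — it only assumes $\sim_M$-invariance and strict monotonicity in $y$. Without it, the monotonicity $Y^n\uparrow$ can fail, and the whole construction collapses. The same problem reappears in your uniqueness argument: applying Theorem \ref{Comparison Theorem} "in both directions with identical data" still requires its hypothesis (iv), which compares $f$ at the two different processes $Z$ and $Z'$ and is not implied by the data being identical.

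The paper's fix, which you miss, is to run the entire argument one time step at a time by backward induction. With $Y_{t+1}$ already constructed (and the same for every $n$), taking conditional expectations in the one-step penalized equation forces $(Z^n_t)^*M_{t+1}=Y_{t+1}-E[Y_{t+1}\mid\mathcal{F}_t]$, so by the martingale representation theorem $Z^n_t\equiv Z_t$ is literally independent of $n$; hypothesis (iii) then holds with both sides equal to zero, which is exactly where your $\sim_M$ argument becomes valid. The one-step reduction also disposes of the two difficulties you flag as "the main obstacle": the $n$-uniform bound on $Y^n_t$ falls out of strict monotonicity of $y\mapsto y-f(t,y,z)$ applied separately on $\{Y^n_t\geq S_t\}$ and $\{Y^n_t<S_t\}$ (no global-in-time estimate or bound on $K^n_T$ is needed), the convergence of $\Delta K^n_t=n(Y^n_t-S_t)^-$ follows from continuity of $f$ in $y$, and no weak limit of the martingale parts is required since $Z$ never changes with $n$. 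Uniqueness is likewise handled at the first time of disagreement, where the common terminal value $Y_{t+1}$ again neutralizes the $z$-comparison hypothesis. I recommend you restructure the proof around this backward induction; as written, the key comparison steps are unjustified under the stated hypotheses.
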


\begin{proof}
It is clear that the solution $Y_{T}=\xi$ at time $T$. Then we construct the
solution for all $t$ using backward induction. Without loss of generality, we
only consider the following one-step RBSDE
\begin{equation}
Y_{t}=Y_{t+1}+f(t,Y_{t},Z_{t})+K_{t+1}-K_{t}-Z_{t}^{\ast}M_{t+1}.
\label{EU-bsde1}
\end{equation}

\noindent \textbf{(1) Existence.} We divide the proof into two steps. In
the first step, we construct a sequence of BSDEs and prove the convergence of
the corresponding solutions. We prove that the limit obtained in Step 1 is a
solution of (\ref{FS_BSDE}) in the second step.

\medskip
\noindent \textbf{Step 1.} Consider the following sequence of BSDEs
\begin{equation}
Y_{t}^{n}=Y_{t+1}+f(t,Y_{t}^{n},Z_{t}^{n})+n(Y_{t}^{n}-S_{t})^{-}-(Z_{t}%
^{n})^{\ast}M_{t+1},\quad n\in\mathbb{N} \label{EU-bsde2}%
\end{equation}
where $\mathbb{N}$ is the set of positive integers. Taking a
conditional expectation in (\ref{EU-bsde2}), we get
\begin{equation}
Y_{t}^{n}=E[Y_{t+1}|\mathcal{F}_{t}]+f(t,Y_{t}^{n},Z_{t}^{n})+n(Y_{t}%
^{n}-S_{t})^{-},\quad n\in\mathbb{N}. \label{EU-bsde-3}
\end{equation}
Hence,
\[
(Z_{t}^{n})^{*}M_{t+1}=Y_{t+1}-E[Y_{t+1}|\mathcal{F}_{t}].
\]

By the Martingale Representation Theorem (Theorem \ref{MRT}), there exists
a unique process $Z_{t}$, up to equivalence $\sim_{M}$, such that the above
equation is satisfied for an arbitrary $n$. Using this $Z_{t}$,
(\ref{EU-bsde-3}) can be rewritten as
\begin{equation}
Y_{t}^{n}=Y_{t+1}+f(t,Y_{t}^{n},Z_{t})+n(Y_{t}^{n}-S_{t})^{-}-Z_{t}^{\ast}M_{t+1},\quad n\in\mathcal{N}. \label{EU-bsde4}
\end{equation}

Let
\[
{ f_{n}(t,y,z)=f(t,y,z)+n(y-S_{t})^{-}}\text{.}%
\]
Then $y-f_{n}(t,y,z)$ is strictly increasing and continuous in
$y$. By theorem \ref{Comparison Theorem1}, (\ref{EU-bsde4}) has a unique
solution $(Y_{t}^{n},Z_{t})$. It is clear that
\begin{enumerate}[(i)]
\item $f_{n+1}(t,y,z)\geq f_{n}(t,y,z),$ \quad $\forall(y,z)\in \mathbb{R}\times \mathbb{R}^{m}$;
\item $f(t,Y_{t},Z_{t}^{n})-f(t,Y_{t},Z_{t}^{n+1})=0=\underset{i\in\mathcal{Q}_{t}}{\min}\{[Z_{t}^{n}-Z_{t}^{n+1}]^{\ast}(e_{i}-E[X_{t+1}])$, as $Z^n=Z^{n+1}=Z$;
\item since the map $y-f_{n}(t,y,z)$ is strictly increasing, we obtain that if
\[y_{1}-f_{n}(t,y_{1},z)\geq y_{2}-f_{n}(t,y_{2},z),\]
then $y_{1}\geq y_{2}$  $P$-a.s.
\end{enumerate}
Therefore, by the Comparison Theorem \ref{Comparison Theorem1}, we see that $Y_{t}^{n+1}\geq Y_{t}^{n}$ $P$-a.s. Thus we have the existence of a limit
\[Y_{t}^{n}\uparrow Y_{t}\quad P-a.s.\]

From (\ref{EU-bsde-3}), on the event $\{Y_{t}^{n}\geq S_{t}\}$, we have
$Y_{t}^{n}=E[Y_{t+1}|\mathcal{F}_{t}]+f(t,Y_{t}^{n},Z_{t})$. Because the map
$y\mapsto y-f(t,y,z)$ is strictly increasing, we deduce that $Y_{t}^{n}$ is essentially
bounded on $\{Y_{t}^{n}\geq S_{t}\}$. On the event $\{Y_{t}^{n}<S_{t}\}$,
\[Y_{t}^{n}-f(t,Y_{t}^{n},Z_{t})=E[Y_{t+1}|\mathcal{F}_{t}]+n(S_{t}-Y_{t}^{n}).\]
Since  $y-f(t,y,z)$ is strictly increasing and $n(S_{t}- Y_{t}^{n})\geq 0$ on $\{Y_{t}^{n}<S_{t}\}$, there
exists a lower bound for ${ Y_{t}^{n}}$ on this event.
Combining these bounds, from Fatou's Lemma we see that
\[E\mid Y_{t}\mid\leq\lim_{n\rightarrow\infty}E\mid Y_{t}^{n}\mid<\infty.\]

Define a process $K^n$ by $K^n_0=0$ and
\[
K_{t+1}^n-K_t^n=\Delta K_{t}^{n}=n(Y_{t}^{n}-S_{t})^{-}.
\]
 By (\ref{EU-bsde-3}), we have
\[
|\Delta K_{t}^{n+p}-\Delta K_{t}^{n}|\leq |f(t,Y_{t}^{n+p},Z_{t})-f(t,Y_{t}^{n},Z_{t})|+|Y_{t}^{n+p}-Y_{t}^{n}|, \quad \text{for all } p\in\mathbb{N}.\]

Since $f$ is continuous in $y$ and $Y_{t}^{n}\uparrow Y_{t}$ $P$-a.s., we obtain
\[
|\Delta K_{t}^{n+p}-\Delta K_{t}^{n}| \rightarrow 0,\quad \text{as }n\rightarrow \infty.\]
Consequently, there exist random variables $\Delta K_t$ such that $\Delta K_{t}^{n}\rightarrow\Delta K_{t}=K_{t+1}-K_t$ as $n\rightarrow\infty$.
Define the limiting process $K$ by
\[K_{0}=0\text{ and }K_{t}=\sum_{0\leq u<t}\Delta K_{u}.\]
Then as $n\rightarrow\infty$, (\ref{EU-bsde4}) becomes
\[Y_{t}=Y_{t+1}+f(t,Y_{t},Z_{t})+K_{t+1}-K_{t}-Z_{t}^{\ast}M_{t+1}.\]

\noindent\textbf{Step 2. } It is clear that the triple $(Y_{t},Z_{t},K_{t})$ obtained above
 satisfies (i) and (ii) of Definition \ref{FS_BSDE}. It remains to check (iii) and (iv).

First, note that $K_{t}$ is increasing, as $\Delta K_{t}$ is non-negative. As
\[(Y_{t}^{n}-S_{t})\Delta K_{t}^{n}=n(Y_{t}^{n}-S_{t})(Y_{t}^{n}-S_{t})^{-}=-n[(Y_{t}^{n}-S_{t})^{-}]^{2}\leq0,\]
we have that
\[(Y_{t}-S_{t})(K_{t+1}-K_{t})\leq0.\]

On the other hand, as $Y^n$ is increasing in $n$,
\[(Y_{t}^{n+1}-S_{t})^{-}\leq(Y_{t}^{n}-S_{t})^{-}.\]
By (\ref{EU-bsde-3}), we have
\[(Y_{t}^{n}-S_{t})^{-}=\frac{Y_{t}^{n}-E[Y_{t+1}|\mathcal{F}_{t}]-f(t,Y_{t}^{n},Z_{t})}{n}.\]
Then, as $n\rightarrow\infty$,
\[(Y_{t}^{n}-S_{t})^{-}\downarrow0\text{ and}\quad(Y_{t}-S_{t})^{-}=\lim_{n\rightarrow+\infty}(Y_{t}^{n}-S_{t})^{-}=0.\]
It follows that $Y_{t}\geq S_{t}$. Hence
\[(Y_{t}-S_{t})(K_{t+1}-K_{t})\geq0\quad P\text{-a.s.}\]
 Thus, we obtain $(Y_{t}-S_{t})(K_{t+1}-K_{t})=0$ $P$-a.s.

\medskip
\noindent \textbf{(2) Uniqueness.} Suppose that there exist two solutions
$(Y_{t},Z_{t},K_{t})$ and $(Y_{t}^{\prime},Z_{t}^{\prime},K_{t}^{\prime})$ of the
RBSDE (\ref{EU-bsde1}).  Without loss of
generality, suppose $Y_{t}>Y_{t}^{\prime }$ and $Y_{s}=Y_{s}^{\prime }$ for
all $s\in \{0,1,...,(t-1)\}$.
Then $Y_{t}>Y_{t}^{\prime}\geq S_{t}$. It follows that $K_{t+1}-K_{t}=0$ and (\ref{EU-bsde1}) can be simplified to
\[Y_{t}=Y_{t+1}+f(t,Y_{t},Z_{t})-Z_{t}^{\ast}M_{t+1}.\]
 On the other hand,
\[Y_{t}^{\prime}=Y_{t+1}+f(t,Y_{t}^{\prime},Z_{t}^{\prime})+K_{t+1}^{\prime}-K_{t}^{\prime}-Z_{t}^{\prime*}M_{t+1}.\]

By Theorem \ref{Comparison Theorem1}, we have $Y_{t}\leq Y_{t}^{\prime}$ $P$-a.s. This leads to
contradiction. Thus, we have $Y_{t}=Y_{t}^{\prime}$  $P$-a.s.

By Corollary \ref{Corollary Comparison Theorem}, we have $K_{t}=K_{t}^{\prime}$, $K_{t+1}=K_{t+1}^{\prime}$, and consequently
\[\begin{split}
Z_{t}^{\prime\ast}M_{t+1}&=  Y_{t+1}-E[Y_{t+1}|\mathcal{F}_{t}]+K_{t+1}^{\prime}-K_{t}^{\prime}-E[K_{t+1}^{\prime}-K_{t}^{\prime}|\mathcal{F}_{t}]\\
&=  Y_{t+1}-E[Y_{t+1}|\mathcal{F}_{t}]+K_{t+1}-K_{t}-E[K_{t+1}-K_{t}|\mathcal{F}_{t}]\\
&= Z_{t}^{\ast}M_{t+1}
\end{split}
\]
By Definition \ref{def:simM}, we have $Z\sim_{M}Z^{\prime}$.
\end{proof}

% \begin{remark}
% If the map $f$ is strictly decreasing and continuous in $y$, then the theorem still holds.
% \end{remark}

\subsection{Relation to optimal stopping problems}

We now show that the solution $(Y_{t})$ of the RBSDE
(\ref{fs-bsde1}) corresponds to the value of an optimal stopping problem.

\begin{proposition}\label{propositation fsbsde} Let $\{(Y_{t},Z_{t},K_{t})\}_{0\leq t\leq T}$ be a
solution of the RBSDE (\ref{fs-bsde1}). Then for each
$t\in\{0,1,...,T\}$,
\[
Y_{t}=\underset{\theta\in\mathcal{J}_{t}}{\sup}E\Big[\sum_{t\leq s<\theta
}f(s,Y_{s},Z_{s})+S_{\theta}1_{\{\theta<T\}}+\xi1_{\{\theta=T\}}\Big|\mathcal{F}_{t}\Big],
\]
where $\mathcal{J}$ is the set of all stopping times dominated by $T$ and
$\mathcal{J}_{t}\triangleq\{\theta\in\mathcal{J};t\leq\theta\leq T\}$.
\end{proposition}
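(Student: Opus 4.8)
The plan is to prove the two inequalities separately, in both cases exploiting the RBSDE dynamics evaluated between $t$ and a stopping time $\theta$. First I would subtract the defining relation \eqref{fs-bsde1} written at time $\theta$ from the one written at time $t$; since in discrete time each $\theta(\omega)$ is a fixed integer, this subtraction is legitimate pathwise and yields
\[
Y_t = Y_\theta + \sum_{t\le s<\theta} f(s,Y_s,Z_s) + (K_\theta - K_t) - \sum_{t\le s<\theta} Z_s^\ast M_{s+1}
\]
for every $\theta\in\mathcal{J}_t$.

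For the inequality $Y_t \ge \sup_\theta(\cdots)$, I would take $E[\cdot\mid\mathcal{F}_t]$ in the above identity. The stochastic-integral term vanishes: writing $\sum_{t\le s<\theta} Z_s^\ast M_{s+1} = \sum_{s=t}^{T-1} 1_{\{s<\theta\}} Z_s^\ast M_{s+1}$ and noting that $\{s<\theta\}\in\mathcal{F}_s$ because $\theta$ is a stopping time, each summand has zero conditional expectation given $\mathcal{F}_s$, hence given $\mathcal{F}_t$ by the tower property. Since $K$ is increasing we have $K_\theta - K_t \ge 0$, and since $Y_s\ge S_s$ for all $s$ with $Y_T=\xi$, we have $Y_\theta \ge S_\theta 1_{\{\theta<T\}} + \xi 1_{\{\theta=T\}}$. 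Dropping the nonnegative $K$-term and bounding $Y_\theta$ from below then gives the desired lower bound for each $\theta$, and taking the supremum yields one direction.

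For the reverse inequality I would exhibit an optimal stopping time. The natural candidate is the first time the solution touches the obstacle,
\[
\theta_t^\ast = \inf\{s\ge t : Y_s = S_s\}\wedge T,
\]
which is a stopping time in $\mathcal{J}_t$ since the event $\{\theta_t^\ast\le n\}$ is determined by the $\mathcal{F}_n$-measurable quantities $Y_s,S_s$ for $s\le n$. The crucial point is that $K_{\theta_t^\ast}=K_t$. This follows from the Skorohod/complementarity condition (iv) of Definition \ref{FS_BSDE}: because $Y_s\ge S_s$ and $K$ is increasing, every summand $(Y_s-S_s)(K_{s+1}-K_s)$ is nonnegative, so the sum being zero forces each term to vanish; on $\{t\le s<\theta_t^\ast\}$ we have $Y_s>S_s$ by definition of $\theta_t^\ast$, whence $K_{s+1}-K_s=0$ there and thus $K_{\theta_t^\ast}-K_t=0$. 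Moreover $Y_{\theta_t^\ast}=S_{\theta_t^\ast}$ on $\{\theta_t^\ast<T\}$ and $Y_{\theta_t^\ast}=\xi$ on $\{\theta_t^\ast=T\}$. Substituting $\theta=\theta_t^\ast$ into the identity above and taking $E[\cdot\mid\mathcal{F}_t]$ then gives equality, so $Y_t$ is attained and the supremum is at least $Y_t$.

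The main obstacle is the reverse inequality, specifically the verification that $K$ does not increase strictly before $\theta_t^\ast$; everything rests on extracting the termwise complementarity $(Y_s-S_s)(K_{s+1}-K_s)=0$ from the summed condition, which is immediate once one observes the nonnegativity of each term. The forward inequality is essentially bookkeeping: the only care needed is the measurability argument that makes the martingale-transform term disappear under conditional expectation for a genuinely random stopping time $\theta$.
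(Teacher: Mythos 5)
Your proof is correct and follows essentially the same route as the paper: the pathwise identity between $t$ and $\theta$, conditioning to kill the martingale-transform term and using $K_\theta-K_t\ge 0$, $Y_\theta\ge S_\theta 1_{\{\theta<T\}}+\xi 1_{\{\theta=T\}}$ for one direction, and the first hitting time of the obstacle together with the termwise complementarity condition (forcing $K$ to be flat before that time) for the other. The paper's proof is the same argument with slightly less detail on the measurability and termwise-vanishing points you spell out.
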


\begin{proof}
For a given stopping time $\theta\in\mathcal{J}_{t}$, we have
\[Y_{t}=Y_{\theta}+\sum_{t\leq u<\theta}{f(u,Y_{u},Z_{u})}+K_{\theta}-K_{t}-\sum_{t\leq u<\theta}{Z_{u}^{\ast}M_{u+1}},\qquad0\leq t\leq T.
\]
Taking the conditional expectation,
\begin{align*}
Y_{t}  & =E[Y_{\theta}+\sum_{t\leq u<\theta}{f(u,Y_{u},Z_{u})}+K_{\theta}-K_{t}|\mathcal{F}_{t}]\\
&  \geq E[\sum_{t\leq u<\theta}{f(u,Y_{u},Z_{u})}+S_{\theta}1_{\{ \theta<T\}}+\xi1_{\{ \theta=T\}}|\mathcal{F}_{t}].
\end{align*}

In order to obtain the reversed inequality, we define
\[D_{t}=\begin{cases}
         T, & \text{if }Y_{u}>S_{u}\text{ for all } {t\leq u\leq T};\\
\inf\{u: t\leq u\leq T,  Y_{u}=S_{u}\}, & \text{otherwise.}%
        \end{cases}
\]

Note that $\sum_{0\leq t\leq T}(Y_{t}-S_{t})(K_{t+1}-K_{t})=0$
implies that $K_s= K_{s-1}$ for any $t+1\leq s\leq D_{t}$, and so
\[
K_{D_{t}}-K_{t}=\sum_{t+1\leq s\leq D_{t}}(K_{s}-K_{s-1})=0,\quad0\leq t\leq T.
\]
From this, we see that
\begin{align*}
Y_{t} & =E\Big[Y_{D_{t}}+\sum_{t\leq u<D_{t}}{f(u,Y_{u},Z_{u})}+K_{D_{t}}-K_{t}\Big|\mathcal{F}_{t}\Big]\\
&  =E\Big[Y_{D_{t}}+\sum_{t\leq u<D_{t}}{f(u,Y_{u},Z_{u})}\Big|\mathcal{F}_{t}\Big]\\
&  \leq\underset{\theta\in\mathcal{J}_{t}}{\sup}E\Big[\sum_{t\leq u<\theta}f(u,Y_{u},Z_{u})+S_{\theta}1_{\{\theta<T\}}+\xi1_{\{\theta=T\}}
\Big|\mathcal{F}_{t}\Big]
\end{align*}
which completes the proof.
\end{proof}

\begin{example}
Set $L_{t}=\sum_{t\leq u<T}Z_{u}^{\ast}M_{u+1}$. Consider the special case
$f=C$, $S_{T}=\xi\geq0$ where $C$ is a constant. If $(Y,Z, K)$ is a solution, then
\[Y_{0} =E[\xi+CT+K_{T}]  =E\Big[\xi+\underset{0\leq t\leq T}{\sup}(S_{t}+L_{t}-C(T-t)-\xi)^{+}\Big].\]
Since $S_{T}=\xi$, it is easy to check that
\[Y_{0}=\sup_{\theta\in\mathcal{J}_{0}}E[S_{\theta}+C\theta]=E\Big[\underset{0\leq t\leq T}{\sup}(S_{t}+L_{t}+Ct)\Big].
\]
 When $C=0$, we have
\[Y_{0}=\sup_{\theta\in\mathcal{J}_{0}}E[S_{\theta}]=E\Big[\underset{0\leq t\leq
T}{\sup}(S_{t}+L_{t})\Big].\]
\end{example}

By Proposition \ref{propositation fsbsde}, the solution of the
RBSDE in which $f$ is a given stochastic process is the value function of an
optimal stopping problem. In the following, we shall particularly investigate the
cases where $f(t,y,z)$ is a linear function or concave (convex) function. In
the latter case, the solution $\{Y_{t}\}_{0\leq t\leq T}$ is
shown to be the value function of a mixed optimal stopping--optimal stochastic
control problem. Note that El. Karoui et al. \cite{r10} studied similar
problems for reflected backward stochastic differential equations in continuous time.

Without loss of generality, we consider the one-step RBSDE
in our framework, i.e.
\begin{equation}
Y_{t}=Y_{t+1}+f(t,Y_{t},Z_{t})-Z_{t}^{\ast}M_{t+1}+K_{t+1}-K_{t},\qquad 0\leq t<T.
\label{FS-bsde1}
\end{equation}
%
% { Throughout this subsection, we need the following assumption:}
%
% \textbf{Assumption }$(H)$\textbf{: }{ if $Z^{1}\sim_{M}Z^{2}$, then
% $Z^{1}=Z^{2}$ }${ P-a.s.}$
%
% \begin{remark}
% Actually there are many processes $X_{t}$ satisfying the
% Assumption $(\emph{H})$ such as the standard random walk etc.
% \end{remark}

By Proposition \ref{propositation fsbsde}, we have the following
results.

\begin{proposition}
\label{proposiztation solution} Consider the RBSDE (\ref{fs-bsde1}) with
coefficient $f=\alpha_{t},$ where $\{\alpha_{t}\}_{0\leq t\leq T}$ is a given
adapted process and takes values in $R$, and $S$ is a given adapted boundary process. Then the unique solution $(Y,Z,K)$
satisfies
\[Y_{t}=\sup_{\theta\in\mathcal{J}_{t}}E\Big[\sum_{t\leq s<\theta}\alpha
_{s}+S_{\theta}I_{\{\theta<T\}}+\xi I_{\{\theta=T\}}\Big|\mathcal{F}_{t}\Big].
\]
Moreover, if we consider equation (\ref{FS-bsde1}), then we have
\begin{equation}
Y_{t}=S_{t}\vee(\alpha_{t}+E[Y_{t+1}|\mathcal{F}_{t}%
]).\label{solution of one step}%
\end{equation}

\end{proposition}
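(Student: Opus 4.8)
The plan is to prove the two assertions separately, since the optimal-stopping representation is essentially a substitution while the recursion requires a genuine one-step computation. For the representation, I would specialise Proposition \ref{propositation fsbsde} to the present data. Because the coefficient $f=\alpha_t$ does not depend on $(y,z)$, every solution satisfies $f(s,Y_s,Z_s)=\alpha_s$, so the formula of Proposition \ref{propositation fsbsde} reads
\[
Y_t=\sup_{\theta\in\mathcal{J}_t}E\Big[\sum_{t\le s<\theta}\alpha_s+S_\theta 1_{\{\theta<T\}}+\xi 1_{\{\theta=T\}}\Big|\mathcal{F}_t\Big],
\]
which is exactly the first claimed identity. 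Existence and uniqueness of $(Y,Z,K)$ is guaranteed by Theorem \ref{Existence Uniqueness}, whose hypotheses hold trivially since $y\mapsto y-\alpha_t$ is strictly increasing and continuous.

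For the recursion (\ref{solution of one step}) I would start from equation (\ref{FS-bsde1}) with $f=\alpha_t$, namely
\[
Y_t=Y_{t+1}+\alpha_t-Z_t^{\ast}M_{t+1}+K_{t+1}-K_t,
\]
and take $E[\,\cdot\mid\mathcal{F}_t]$ on both sides. Since $Z_t$ is $\mathcal{F}_t$-measurable and $M$ is a martingale difference, the term $E[Z_t^{\ast}M_{t+1}\mid\mathcal{F}_t]$ vanishes; and because the increment $K_{t+1}-K_t$ is $\mathcal{F}_t$-measurable in this one-step setting (it arises in the penalization of Theorem \ref{Existence Uniqueness} as the limit of the $\mathcal{F}_t$-measurable quantities $n(Y_t^n-S_t)^-$, and the solution is unique), conditioning collapses to
\[
Y_t=\alpha_t+E[Y_{t+1}\mid\mathcal{F}_t]+(K_{t+1}-K_t).
\]
Writing $A_t:=\alpha_t+E[Y_{t+1}\mid\mathcal{F}_t]$, this displays $Y_t-A_t=K_{t+1}-K_t$ as a one-step Skorohod increment, subject to the constraints $Y_t\ge S_t$, $K_{t+1}-K_t\ge 0$ and the complementarity $(Y_t-S_t)(K_{t+1}-K_t)=0$ from Definition \ref{FS_BSDE}.

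The conclusion then follows by a short case split. Since the increment is non-negative we always have $Y_t\ge A_t$, and together with $Y_t\ge S_t$ this gives $Y_t\ge S_t\vee A_t$. For the reverse inequality: if $K_{t+1}-K_t>0$, complementarity forces $Y_t=S_t\le S_t\vee A_t$; if $K_{t+1}-K_t=0$, then $Y_t=A_t\le S_t\vee A_t$. In either case $Y_t\le S_t\vee A_t$, so $Y_t=S_t\vee(\alpha_t+E[Y_{t+1}\mid\mathcal{F}_t])$ $P$-a.s. The one point genuinely requiring care is the $\mathcal{F}_t$-measurability of $K_{t+1}-K_t$, which is what lets the conditional expectation produce the clean recursion rather than leaving a residual $E[K_{t+1}-K_t\mid\mathcal{F}_t]$; I would justify it by appealing to the penalization construction underlying Theorem \ref{Existence Uniqueness}. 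As an alternative that sidesteps this issue entirely, the recursion can also be derived directly from the first part by the dynamic programming principle for optimal stopping, splitting $\mathcal{J}_t$ into $\{\theta=t\}$ (yielding $S_t$) and $\{\theta\ge t+1\}$ (yielding $\alpha_t+E[Y_{t+1}\mid\mathcal{F}_t]$ via the tower property).
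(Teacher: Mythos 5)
Your proof is correct and follows the paper's strategy: part one is the direct specialisation of Proposition \ref{propositation fsbsde}, and part two conditions the one-step equation on $\mathcal{F}_t$ and exploits the complementarity condition. The one place you diverge is the point you yourself flag as delicate: you case-split on the sign of $K_{t+1}-K_t$, which forces you to establish that this increment is $\mathcal{F}_t$-measurable (via the penalization construction plus uniqueness of $K$ from Corollary \ref{Corollary Comparison Theorem} --- a correct but somewhat roundabout justification, since Definition \ref{FS_BSDE} only requires $K$ adapted). The paper sidesteps this entirely: it keeps the residual term $E[K_{t+1}-K_t\mid\mathcal{F}_t]$ and splits instead on whether $\rho_t:=\alpha_t+E[Y_{t+1}\mid\mathcal{F}_t]$ is $\geq S_t$ or $<S_t$; in the first case $Y_t-S_t\geq E[K_{t+1}-K_t\mid\mathcal{F}_t]\geq 0$ combined with complementarity kills the increment, and in the second case the increment must be positive with positive conditional probability, so complementarity pins $Y_t=S_t$. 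That version needs no predictability of $K$ and is the cleaner argument; your alternative derivation via the dynamic programming split of $\mathcal{J}_t$ also works and avoids the issue, but is not the route the paper takes.
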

\begin{proof}
 In fact, we can directly obtain (\ref{solution of one step}) from the definition of an RBSDE solution (Definition
\ref{FS_BSDE}). Denote
\[
\rho_{t}=\alpha_{t}+E[Y_{t+1}|\mathcal{F}_{t}].
\]
Taking the conditional expectation for (\ref{FS-bsde1}), we have
\[
Y_{t}=\rho_{t}+E[K_{t+1}-K_{t}|\mathcal{F}_{t}].
\]

There are then two cases:
\begin{enumerate}[(i)]
\item $\rho_{t}\geq S_{t}$. Then $Y_{t}-S_{t}\geq E[K_{t+1}-K_{t}|\mathcal{F}_{t}]\geq 0$, since $K$ is an increasing process. However, by condition (iv) of Definition \ref{FS_BSDE}, it follows that $K_{t+1}-K_{t}=0$, so $Y_{t}=\rho_{t}$.
\item $\rho_{t}<S_{t}$. It follows that $K_{t+1}-K_{t}>0$. By condition (iv) of Definition \ref{FS_BSDE}
we have $Y_{t}=S_{t}$.
\end{enumerate}
{ To sum up, $Y_{t}=S_{t}\vee\rho_{t}$, i.e. $Y_{t}=S_{t}%
\vee(\alpha_{t}+E[Y_{t+1}|\mathcal{F}_{t}])$. }
\end{proof}

To neatly consider linear RBSDEs, we need the following definition.
\begin{definition}\label{defQvector}
 Recall from (\ref{Qtset}) that $\mathcal{Q}_t$ defines the set of possible jumps of $X$ at time $t$. We shall say that $\gamma$ is a $Q$-vector process if it is an  adapted process in $L^1(\mathbb{R}^m)$ which satisfies $\sum_j \langle \gamma_t, e_j\rangle = 0$ and $\langle \gamma_t, e_i\rangle =0$ for all $e_i\not\in \mathcal{Q}_t$. We write $\mathbb{R}^m_{Q,t}$ for the space of $Q$-vectors at time $t$, and note this is a subspace of $\mathbb{R}^m$.
\end{definition}
\begin{lemma}
For $\gamma$ a $Q$-vector process, the function $f(\omega,t, z) = \langle \gamma_t, z\rangle $ satisfies Theorem \ref{thm:BSDEexist} condition (i). Furthermore, the solution process $Z$ can be taken to lie in $\mathbb{R}^m_{Q,t}$ without loss of generality, and is unique in this space.
\end{lemma}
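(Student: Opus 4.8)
The plan is to extract from the definition of $\sim_M$ one structural fact about differences of $Z$-processes and then apply it twice. Since $\|Z^1-Z^2\|_M^2$ is a sum over $u$ of expectations of the nonnegative quantities $\Tr[(\Delta Z_u^\ast M_{u+1})(\Delta Z_u^\ast M_{u+1})^\ast]$, where $\Delta Z=Z^1-Z^2$, the hypothesis $Z^1\sim_M Z^2$ forces each term to vanish, giving $\Delta Z_u^\ast M_{u+1}=0$ $P$-a.s. for every $u$. Because $M_{u+1}=e_i-E[X_{u+1}\mid\mathcal{F}_u]$ on $\{X_{u+1}=e_i\}$, and this event has positive conditional probability precisely when $i\in\mathcal{Q}_u$, I would condition on $\mathcal{F}_u$ to deduce that $\langle\Delta Z_u,\,e_i-E[X_{u+1}\mid\mathcal{F}_u]\rangle=0$ for every $i\in\mathcal{Q}_u$, $P$-a.s. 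Writing $c:=\langle\Delta Z_u,E[X_{u+1}\mid\mathcal{F}_u]\rangle$, this says that all coordinates $\langle\Delta Z_u,e_i\rangle$ with $i\in\mathcal{Q}_u$ share the common value $c$.

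To verify condition (i) of Theorem \ref{thm:BSDEexist}, I would compute $f(u,Y_u,Z_u^1)-f(u,Y_u,Z_u^2)=\langle\gamma_u,\Delta Z_u\rangle$. As $\gamma_u$ is a $Q$-vector it vanishes off $\mathcal{Q}_u$, so this inner product equals $\sum_{i\in\mathcal{Q}_u}\langle\gamma_u,e_i\rangle\langle\Delta Z_u,e_i\rangle=c\sum_{i\in\mathcal{Q}_u}\langle\gamma_u,e_i\rangle=c\sum_j\langle\gamma_u,e_j\rangle=0$, the last equality being the zero-sum property defining a $Q$-vector. Hence $f(u,Y_u,Z_u^1)=f(u,Y_u,Z_u^2)$, which is condition (i).

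For the remaining claims I would first note that $M_{u+1}$ is itself, pathwise, a $Q$-vector: its coordinates sum to zero and it vanishes outside $\mathcal{Q}_u$, so $M_{u+1}\in\mathbb{R}^m_{Q,u}$. Given any solution $Z$, define $\bar Z_u$ to be the orthogonal projection of $Z_u$ onto $\mathbb{R}^m_{Q,u}$; this is $\mathcal{F}_u$-measurable since the subspace depends only on the $\mathcal{F}_u$-measurable set $\mathcal{Q}_u$. Because $Z_u-\bar Z_u$ is orthogonal to $\mathbb{R}^m_{Q,u}$, which contains both $M_{u+1}$ and $\gamma_u$, we get $\bar Z_u^\ast M_{u+1}=Z_u^\ast M_{u+1}$ and $f(u,Y_u,\bar Z_u)=f(u,Y_u,Z_u)$, so $(Y,\bar Z)$ is again a solution with $\bar Z$ valued in $\mathbb{R}^m_{Q,u}$; this is the `without loss of generality' statement. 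For uniqueness within this space, I would take two $\mathbb{R}^m_{Q,u}$-valued solutions with $Z^1\sim_M Z^2$ and invoke the structural fact: the coordinates of $\Delta Z_u$ over $\mathcal{Q}_u$ all equal some $c$, while $\Delta Z_u\in\mathbb{R}^m_{Q,u}$ forces $\sum_{i\in\mathcal{Q}_u}\langle\Delta Z_u,e_i\rangle=0$, whence $|\mathcal{Q}_u|\,c=0$ and $\Delta Z_u=0$ $P$-a.s.

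The main obstacle is the random nature of $\mathcal{Q}_u$. Passing from $\Delta Z_u^\ast M_{u+1}=0$ a.s. to the coordinatewise statement on $\{i\in\mathcal{Q}_u\}$ must be done by a conditional-probability argument (each $i\in\mathcal{Q}_u$ is realized with positive conditional probability) rather than a naive pathwise reading, and one must similarly confirm that the projection $\bar Z$ remains adapted. Once these measurability points are settled, the rest is elementary linear algebra on the subspace $\mathbb{R}^m_{Q,u}$.
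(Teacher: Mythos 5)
Your argument is correct and follows essentially the same route as the paper: the paper's (one-line) proof simply asserts that $Z\sim_M Z'$ means $Z_t$ and $Z'_t$ differ only by a constant on $\mathcal{Q}_t$ and by arbitrary values off $\mathcal{Q}_t$, both of which lie in the kernel of $\langle\gamma_t,\cdot\rangle$. You derive that characterization explicitly via the conditional-probability argument and spell out the projection and uniqueness steps that the paper leaves implicit, which is a faithful (and more complete) rendering of the same idea.
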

\begin{proof}
 Simply note that $Z \sim_M Z'$ if and only if, for every $t$, $Z_t$ and $Z'_t$ differ at most by a constant and by the values of $\langle Z_t, e_i\rangle$ for $e_i\notin\mathcal{Q}_t$, both of which are in the kernel of the linear map $\langle \gamma_t, \cdot \rangle$.
\end{proof}

\begin{proposition}\label{proposiztation solution2}
Let $\{\alpha_{t},\beta_{t},\gamma_{t}\}_{0\leq t\leq T}$ be adapted processes
taking values in $\mathbb{R}\times\lbrack0,1)\times \mathbb{R}^{m}$, and let $\gamma$ be a $Q$-vector process. Let $S$ be a given adapted boundary process. Consider the
RBSDE (\ref{FS-bsde1}) with
\[
f(t,y,z)=\alpha_{t}+\beta_{t}y+\langle\gamma_{t},z\rangle.
\]
Then the solution $(Y,Z,K)$ satisfies
\begin{equation}
Y_{t}=S_{t}\vee(\alpha_{t}+\beta_{t}Y_{t}+\langle\gamma_{t},z\rangle+E[Y_{t+1}|\mathcal{F}_{t}]). \label{FS-bsde2}%
\end{equation}
\end{proposition}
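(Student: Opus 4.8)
The plan is to adapt the proof of Proposition~\ref{proposiztation solution} almost verbatim, absorbing the two extra terms $\beta_t Y_t$ and $\langle\gamma_t,Z_t\rangle$ into the continuation value. Before doing so, I would confirm that the RBSDE genuinely has a unique solution by checking the hypotheses of Theorem~\ref{Existence Uniqueness}. Condition (i) there holds because, by the preceding Lemma, $z\mapsto\langle\gamma_t,z\rangle$ respects $\sim_M$ whenever $\gamma$ is a $Q$-vector process, and adding the $z$-independent term $\alpha_t+\beta_t y$ leaves this property intact. Condition (ii) holds since $y\mapsto y-f(t,y,z)=(1-\beta_t)y-\alpha_t-\langle\gamma_t,z\rangle$ is continuous and, because $\beta_t\in[0,1)$ forces $1-\beta_t>0$, strictly increasing in $y$. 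Thus a unique triple $(Y,Z,K)$ exists.

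Next I would take $E[\,\cdot\mid\mathcal{F}_t]$ in the one-step equation (\ref{FS-bsde1}). Since $Y_t$ and $Z_t$ are $\mathcal{F}_t$-measurable and $E[Z_t^{\ast} M_{t+1}\mid\mathcal{F}_t]=Z_t^{\ast} E[M_{t+1}\mid\mathcal{F}_t]=0$ (as $M$ is a martingale difference), this gives
\[
Y_t=\alpha_t+\beta_t Y_t+\langle\gamma_t,Z_t\rangle+E[Y_{t+1}\mid\mathcal{F}_t]+E[K_{t+1}-K_t\mid\mathcal{F}_t].
\]
Writing $\rho_t=\alpha_t+\beta_t Y_t+\langle\gamma_t,Z_t\rangle+E[Y_{t+1}\mid\mathcal{F}_t]$, this reads $Y_t=\rho_t+E[K_{t+1}-K_t\mid\mathcal{F}_t]$, exactly the relation appearing in Proposition~\ref{proposiztation solution} but with the augmented continuation value $\rho_t$.

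Then I would run the same two-case argument. Conditioning the flat-off relation $(Y_t-S_t)(K_{t+1}-K_t)=0$ on $\mathcal{F}_t$ and using that $Y_t,S_t$ are $\mathcal{F}_t$-measurable yields $(Y_t-S_t)E[K_{t+1}-K_t\mid\mathcal{F}_t]=0$. If $\rho_t\geq S_t$, then a strictly positive increment $E[K_{t+1}-K_t\mid\mathcal{F}_t]>0$ would force $Y_t=S_t$, whence $S_t=\rho_t+E[K_{t+1}-K_t\mid\mathcal{F}_t]>S_t$, a contradiction; so the increment vanishes and $Y_t=\rho_t=S_t\vee\rho_t$. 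If $\rho_t<S_t$, then $Y_t\geq S_t>\rho_t$ forces the increment to be strictly positive, so $Y_t=S_t=S_t\vee\rho_t$. In either case $Y_t=S_t\vee\rho_t$, which is (\ref{FS-bsde2}).

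I expect no substantive obstacle. The only point needing care is the observation that the new terms $\beta_t Y_t$ and $\langle\gamma_t,Z_t\rangle$ are $\mathcal{F}_t$-measurable and therefore pass unchanged through $E[\,\cdot\mid\mathcal{F}_t]$, so that they fold cleanly into $\rho_t$ and the entire argument of Proposition~\ref{proposiztation solution} transfers without modification. The resulting characterization is implicit in $Y_t$ (it reappears on the right through $\beta_t Y_t$), but this is intrinsic to the linear generator and does not affect the derivation.
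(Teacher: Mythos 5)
Your proposal is correct and follows essentially the same route as the paper: the paper verifies that Theorem \ref{Existence Uniqueness} applies and then invokes Proposition \ref{proposiztation solution} with $f(t,Y_t,Z_t)$ playing the role of the fixed adapted term $\alpha_t$, which is exactly the reduction you carry out (you simply re-derive the two-case conditional-expectation argument rather than citing it). The one detail worth keeping is your explicit check that $\beta_t\in[0,1)$ makes $y\mapsto(1-\beta_t)y-\alpha_t-\langle\gamma_t,z\rangle$ strictly increasing and that the $Q$-vector lemma gives invariance under $\sim_M$, which the paper leaves as ``easy to check.''
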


\begin{proof}
It is easy to check that Theorem \ref{Existence Uniqueness} applies, so the solution $(Y,Z,K)$ exists and is unique. By Proposition
\ref{proposiztation solution} applied with $f(t,Y_t, Z_t)$ as the fixed term, we obtain (\ref{FS-bsde2}).
\end{proof}

Note that this also yields a simple method of calculating solutions. We have that $Z_{t}$ satisfies $Z_{t}^{\ast}M_{t+1}=Y_{t+1}-E[Y_{t+1}|\mathcal{F}_{t}]$ from
the proof of Theorem \ref{Existence Uniqueness}, and so we obtain
\[
\hat Y_{t}=\frac{1}{1-\beta_{t}}\big(\alpha_{t}+\langle\gamma_{t},Z_t\rangle+E[Y_{t+1}|\mathcal{F}_{t}]\big).
\]
For each $(\omega, t)$, if $\hat Y_{t}>S_{t}$, then $Y_t=\hat Y_t$ is as desired; otherwise, $Y_{t}=S_{t}$.

\begin{remark}
If $\beta_{t}=1$, then
\[y-f(t,y,z)=-\alpha_{t}-\langle\gamma_{t},z\rangle\]
which violates assumption (ii) of Theorem \ref{Existence Uniqueness}, as the right hand side is independent of $y$. Thus, we
can not guarantee that there exists a unique solution of the linear RBSDE (and typically, no solution will exist).
\end{remark}

\subsubsection{Concave coefficients}
We now suppose that for each fixed $(\omega,t)$, the driver $f(t,y,z)$ is a
concave function of $(y,z)$. For each $(\omega,t,\beta,\gamma)\in\Omega\times\{0,1,...,T\} \times \mathbb{R}\times \mathbb{R}^{m}$, define the conjugate
function $F(t,\beta,\gamma)$ as follows:
\[\begin{split}
F(\omega,t,\beta,\gamma)&=\sup_{(y,z)\in\mathbb{R}\times \mathbb{R}^m_{Q,t}}(f(t,y,z)-\beta y-\langle\gamma,z\rangle)\\
D_{t}^{F}(\omega)&=\{(\beta,\gamma)\in \mathbb{R}\times \mathbb{R}^{m}_{Q,t};F(\omega,t,\beta,\gamma)<\infty\}.
\end{split}
\]
It follows that
\[
f(t,y,z)=\inf_{(\beta,\gamma)\in D_{t}^{F}}\{F(t,\beta,\gamma)+\beta y+\langle \gamma,z\rangle\},\]
the infimum is achieved at $(\beta^{\prime},\gamma^{\prime})\in D_{t}^{F}$ and the set $D_{t}^{F}$ is a.s. bounded (refer to \cite{EPQ}).  If the function $f$ also satisfies conditions (iii) and (iv) of the Comparison Theorem (Theorem \ref{Comparison Theorem1}), we see that the infimum is attained in the smaller set
\begin{equation}\label{eq:Cdef}
 \begin{split}
C_{t}^{F}(\omega)&=\big\{(\beta,\gamma)\in D_t^F: |\beta_t|<1,\\
    &\qquad\qquad \langle \gamma_t, z\rangle \geq \langle e_i - E[X_{t+1}|\mathcal{F}_t], z\rangle\text{ for all }z\in\mathbb{R}^m_{Q,t}, e_i\in \mathcal{Q}_t\big\}.
  \end{split}
\end{equation}

Denote the solution of RBSDE with coefficient
\[f^{\beta,\gamma}(t,y,z)=F(t,\beta_{t},\gamma_{t})+\beta_{t}y+\langle\gamma_{t},z\rangle
\]
 by $\{(Y_{t}^{\beta,\gamma},Z_{t}^{\beta,\gamma},K_{t}^{\beta,\gamma
})\}_{0\leq t\leq T}$ (resp. $\{(Y_{t},Z_{t},K_{t})\}_{0\leq t\leq T}$ for the RBSDE with coefficient $f(t,y,z)$). Then, $P$-a.s. for all $t$, we have
\begin{align*}
f(t,Y_{t},Z_{t})  &  =F(t,\beta^{\prime},\gamma^{\prime})+\beta^{\prime}Y_{t}+\langle\gamma^{\prime},Z_{t}\rangle\\
(Y_{t},Z_{t},K_{t})  &  =(Y_{t}^{\beta^{\prime},\gamma^{\prime}},Z_{t}^{\beta^{\prime},\gamma^{\prime}},K_{t}^{\beta^{\prime},\gamma^{\prime}})\quad
\end{align*}
and so $Y_{t}=Y_{t}^{\beta^{\prime},\gamma^{\prime}}$ can be
interpreted as the value functions of an optimization problem.

\begin{theorem}\label{convexdriverRBSDE}
For each $(\beta_{t},\gamma_{t})\in C_{t}^{F}$ with $|\beta_{t}|<1$ $P$-a.s., we have
\begin{align*}
Y_{t}^{\beta,\gamma} &  =S_{t}\vee(F(t,\beta_{t},\gamma_{t})+\beta_{t} Y_{t}^{\beta,\gamma}+\langle \gamma_{t},Z_{t}\rangle +E[Y_{t+1}|\mathcal{F}_{t}]);\\
Y_{t} &  =S_{t}\vee(F(t,\beta_{t}^{\prime},\gamma_{t}^{\prime})+\beta_{t}^{\prime} Y_{t}+\langle \gamma_{t}^{\prime},Z_{t}\rangle +E[Y_{t+1}|\mathcal{F}_{t}]).
\end{align*}
Moreover,
\begin{align*}
Y_{t} &  =\inf_{(\beta,\gamma)\in C_{t}^{F}}Y_{t}^{\beta,\gamma}\\
&  =\inf_{(\beta,\gamma)\in C_{t}^{F}}\big(S_{t}\vee(F(t,\beta_{t},\gamma_{t})+\beta_{t}Y_{t}^{\beta,\gamma}+\langle \gamma_{t},Z_{t}\rangle+E[Y_{t+1}|\mathcal{F}_{t}])\big)\\
&  =S_{t}\vee\inf_{(\beta,\gamma)\in C_{t}^{F}}(F(t,\beta_{t},\gamma
_{t})+\beta_{t}Y_{t}^{\beta,\gamma}+\langle\gamma_{t},Z_{t}\rangle+E[Y_{t+1}%
|\mathcal{F}_{t}]).
\end{align*}

In other words, $Y_{t}$ is the value function of a minimax
control problem, and the triple ($\beta^{\prime},\gamma^{\prime},D_{t}$),
where $D_{t} =\inf\{s:t\leq s\leq T, Y_{s}=S_{s}\}$ is optimal.
\end{theorem}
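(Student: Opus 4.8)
The plan is to prove the infimum representation by a two-sided comparison argument and then to commute the lattice operation $S_t\vee(\cdot)$ with the infimum. First I would dispose of the two one-step identities. Applying Proposition \ref{proposiztation solution2} to the linear driver $f^{\beta,\gamma}(t,y,z)=F(t,\beta_t,\gamma_t)+\beta_t y+\langle\gamma_t,z\rangle$ yields the first displayed equation directly. For the second, I would apply Proposition \ref{proposiztation solution} to the original RBSDE, treating the value $\alpha_t=f(t,Y_t,Z_t)$ as the fixed coefficient, and then substitute the conjugacy relation $f(t,Y_t,Z_t)=F(t,\beta_t',\gamma_t')+\beta_t' Y_t+\langle\gamma_t',Z_t\rangle$ that holds at the maximiser $(\beta',\gamma')$.

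For the bound $Y_t\le\inf_{(\beta,\gamma)\in C_t^F}Y_t^{\beta,\gamma}$, I would fix $(\beta,\gamma)\in C_t^F$ and apply the RBSDE Comparison Theorem \ref{Comparison Theorem} to the data $(\xi,f^{\beta,\gamma},S)$ and $(\xi,f,S)$. The point is that the set $C_t^F$ in (\ref{eq:Cdef}) is engineered precisely so that every hypothesis holds: conjugate duality gives $f(t,y,z)\le F(t,\beta_t,\gamma_t)+\beta_t y+\langle\gamma_t,z\rangle=f^{\beta,\gamma}(t,y,z)$ pointwise, which is hypothesis (ii); the constraint $\langle\gamma_t,z\rangle\ge\langle e_i-E[X_{t+1}\mid\mathcal{F}_t],z\rangle$ for $z\in\mathbb{R}^m_{Q,t}$ and $e_i\in\mathcal{Q}_t$ is exactly hypothesis (iv) for a linear driver (take $z=Z_t^1-Z_t^2$ and minimise over $i$); and $|\beta_t|<1$ makes $y\mapsto(1-\beta_t)y-F(t,\beta_t,\gamma_t)-\langle\gamma_t,z\rangle$ strictly increasing, securing hypothesis (v). Hypotheses (i) and (iii) are immediate since $\xi$ and $S$ are shared. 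Comparison then gives $Y_t\le Y_t^{\beta,\gamma}$ for every admissible $(\beta,\gamma)$.

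For the reverse inequality I would show the infimum is attained at $(\beta',\gamma')$. Along the solution of the original RBSDE one has $f(t,Y_t,Z_t)=f^{\beta',\gamma'}(t,Y_t,Z_t)$, so $(Y,Z,K)$ also solves the RBSDE with driver $f^{\beta',\gamma'}$; by the uniqueness part of Theorem \ref{Existence Uniqueness} it coincides with $(Y^{\beta',\gamma'},Z^{\beta',\gamma'},K^{\beta',\gamma'})$, giving $Y_t=Y_t^{\beta',\gamma'}$ and hence $Y_t=\inf_{(\beta,\gamma)\in C_t^F}Y_t^{\beta,\gamma}$. To pull the obstacle out of the infimum I would use the elementary identity $\inf_i(c\vee a_i)=c\vee\inf_i a_i$, valid for any constant $c$ because $x\mapsto c\vee x$ is nondecreasing and continuous; applied with $c=S_t$ this produces the last displayed line. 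The minimax interpretation then assembles the two optimisers: $(\beta',\gamma')$ is the optimal control by the attainment just shown, and $D_t=\inf\{s:t\le s\le T,\ Y_s=S_s\}$ is the optimal stopping time furnished by Proposition \ref{propositation fsbsde}.

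The main obstacle is a consistency issue hidden in the notation: the identities are written with a single process $Z_t$ and a single future value $E[Y_{t+1}\mid\mathcal{F}_t]$ shared across all controls, whereas a priori the full RBSDE for $f^{\beta,\gamma}$ carries its own $Z^{\beta,\gamma}$ and its own $Y^{\beta,\gamma}_{t+1}$. I would resolve this by arguing by backward induction on $t$, so that at each step the future values coincide and the relation $Z_t^{\ast}M_{t+1}=Y_{t+1}-E[Y_{t+1}\mid\mathcal{F}_t]$ from the proof of Theorem \ref{Existence Uniqueness} pins down a common $Z_t$. The accompanying delicate point is verifying that the maximiser $(\beta',\gamma')$ actually lies in the smaller set $C_t^F$ rather than merely in $D_t^F$; this is where the standing assumption that $f$ satisfies conditions (iii)--(iv) of the Comparison Theorem is used, exactly as recorded in the discussion preceding the theorem.
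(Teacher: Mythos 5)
Your proposal is correct and follows essentially the same route as the paper's proof: the one-step identities from Proposition \ref{proposiztation solution2}, the Comparison Theorem for $Y_{t}\leq Y_{t}^{\beta,\gamma}$, attainment of the infimum at the conjugate-duality optimiser $(\beta',\gamma')$, and the exchange of $\inf$ with $\vee$. You supply considerably more detail than the paper does—in particular the explicit verification of the comparison hypotheses from the definition of $C_{t}^{F}$, the check that $(\beta',\gamma')$ lies in $C_{t}^{F}$ rather than merely $D_{t}^{F}$, and the backward-induction resolution of the shared $Z_{t}$ and $E[Y_{t+1}\mid\mathcal{F}_{t}]$—all of which the paper leaves implicit.
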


\begin{proof}
The first statement is apparent from Proposition \ref{proposiztation solution2}. By the Comparison Theorem \ref{Comparison Theorem}, we have
\[
Y_{t}\leq Y_{t}^{\beta,\gamma},\quad\text{for all }(\beta,\gamma)\in C_{t}^{F}.
\]
On the other hand,
\[
Y_{t}=Y_{t}^{\beta^{\prime},\gamma^{\prime}}\geq\inf_{(\beta,\gamma)\in
D_{t}^{F}}Y_{t}^{\beta,\gamma},
\]
which immediately leads to
\[
Y_{t}=\inf_{(\beta,\gamma)\in C_{t}^{F}}Y_{t}^{\beta,\gamma}.
\]
Finally, it is easy to see that the operators $\inf$ and $\vee$ can be exchanged.
\end{proof}

\begin{remark}\label{supstopping}
If $f$ is a convex function of $(y,z)$, by essentially the same argument, we have a similar representation of the form
\[Y_{t}  =S_{t}\vee\sup_{(\beta,\gamma)\in C_{t}^{(-F)}}(F(t,\beta_{t},\gamma
_{t})+\beta_{t}Y_{t}^{\beta,\gamma}+\langle\gamma_{t},Z_{t}\rangle+E[Y_{t+1}|\mathcal{F}_{t}])
\]
where $F(t, \beta, \gamma) = \inf_{(y,z)\in \mathbb{R}\times \mathbb{R}^m_{Q,t}} (f(t,y,z)-\beta y-\langle\gamma,z\rangle)$, and $C^{(-F)}_t$ is as defined in (\ref{eq:Cdef}).
\end{remark}

\section{Optimal stopping under $g$-expectation}

In order to studyoptimal stopping problems under
$g$-expectation, we first study the $g$-expectation theory on spaces related to
discrete time, finite state processes. Then we give the connection between
multiple prior martingales and $g$-martingales. Last, we show that some
optimal stopping problems with multiple priors can be solved by computing the
corresponding RBSDEs.

\subsection{$g$-expectation}

Peng \cite{r18} and \cite{r19} introduced the notions of
$g$-expectation and conditional $g$-expectation as well as $g$-martingale via
backward stochastic differential equations. The aim of this section is to study the
$g$-expectation theory in our framework.

\begin{definition}
 We say a driver $f$ satisfying the conditions of Theorem \ref{Existence Uniqueness} is $\emph{normalised}$ if
\[f(\omega,t, y,0) = 0\quad P\text{-a.s. for all }t\in\{0,1,...,T\}, y\in \mathbb{R}.\]
\end{definition}

\begin{definition}\label{def:gexp}
 A system of operators
\[G(\cdot|\mathcal{F}_t) : L^1(\mathcal{F}_T; \mathbb{R}) \to L^1(\mathcal{F}_t; \mathbb{R})\]
is called a filtration consistent nonlinear expectation if it satisfies, for all $\xi\in L^1(\mathcal{F}_T; \mathbb{R})$, all $t$,
\begin{enumerate}[(i)]
 \item If $\xi\geq \xi'$ $P$-a.s. then $G(\xi|\mathcal{F}_t) \geq G(\xi'|\mathcal{F}_t)$.
 \item For any $\mathcal{F}_t$-measurable $\xi$, we have $G(\xi|\mathcal{F}_t) = \xi$.
 \item We have the tower property $G(G(\xi|\mathcal{F}_t)|\mathcal{F}_s) = G(\xi|\mathcal{F}_s)$ for  all $s<t$.
\item For any $A\in \mathcal{F}_t$, we have $I_A G(\xi|\mathcal{F}_t) = G(I_A \xi|\mathcal{F}_t)$.
\end{enumerate}
It is said to be translation invariant if
\begin{enumerate}[(i)]
 \setcounter{enumi}{4}
\item For any $q\in L^1(\mathcal{F}_t, \mathbb{R})$ we have $G(\xi+q|\mathcal{F}_t) = G(\xi|\mathcal{F}_t) +q$.
\end{enumerate}
\end{definition}

We have the following representation theorem from \cite{r8}, Theorem 7.

\begin{theorem} \label{gexprepresentation}The following are equivalent.

\begin{enumerate}[(i)]
 \item $G$ is a filtration consistent, translation invariant nonlinear expectation
 \item $Y_t=G(\xi|\mathcal{F}_t)$ is the solution to a BSDE with coefficient $f:\Omega\times \{0,1,...,T\}\times \mathbb{R}^m \to \mathbb{R}$, and
$f$ satisfies the conditions of Theorem \ref{Existence Uniqueness} (so BSDE solutions exist) and conditions (iii) and (iv) of Theorem  \ref{Comparison Theorem1} (so the comparison theorem holds) and is normalised.
\end{enumerate}
Furthermore, the function $f$ is unique, and can be obtained from the relation $f(\omega, t, z) = G(z^* M_{t+1}|\mathcal{F}_t)$.
\end{theorem}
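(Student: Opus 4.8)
The statement is an equivalence, so the plan is to prove both implications, and I expect the single formula $f(\omega,t,z)=G(z^{\ast}M_{t+1}\mid\mathcal{F}_{t})$ to be the linchpin: it produces the driver in the hard direction and simultaneously forces its uniqueness. I would organise everything around the one-step structure of the BSDE, exploiting that in this finite-state discrete setting the martingale representation (Theorem \ref{MRT}) is exact and each $\mathcal{F}_t$ is a finite $\sigma$-algebra with finitely many atoms.

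For (ii)$\Rightarrow$(i), I would take $G(\xi\mid\mathcal{F}_t):=Y_t$ to be the BSDE solution operator and verify the five axioms of Definition \ref{def:gexp} in turn. Monotonicity (i) is exactly the Comparison Theorem \ref{Comparison Theorem1}, whose hypotheses (iii)--(iv) are assumed in (ii). For axiom (ii), given $\mathcal{F}_t$-measurable $\xi$ I would exhibit the explicit solution $Y_u=\xi,\ Z_u=0$ on $[t,T]$, which solves the BSDE precisely because $f$ is normalised, and conclude $G(\xi\mid\mathcal{F}_t)=\xi$ by uniqueness. The tower property (iii) is the flow property of the BSDE: splitting the backward recursion at time $t$ and invoking uniqueness gives it immediately. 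The local property (iv) again uses normalisation together with uniqueness, restricting the recursion to $A\in\mathcal{F}_t$. Translation invariance (v) is where I would use that the representing $f$ depends on $z$ only: adding an $\mathcal{F}_t$-measurable $q$ to $\xi$ shifts the whole solution on $[t,T]$ by $q$ without altering the driver, so $Y_t$ shifts by $q$.

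For (i)$\Rightarrow$(ii), set $Y_t=G(\xi\mid\mathcal{F}_t)$. The tower property yields the one-step consistency $Y_t=G(Y_{t+1}\mid\mathcal{F}_t)$. Since $Y_{t+1}-E[Y_{t+1}\mid\mathcal{F}_t]$ is $\mathcal{F}_{t+1}$-measurable with vanishing conditional mean, Theorem \ref{MRT} supplies an $\mathcal{F}_t$-measurable $Z_t$ with $Z_t^{\ast}M_{t+1}=Y_{t+1}-E[Y_{t+1}\mid\mathcal{F}_t]$. Writing $Y_{t+1}=E[Y_{t+1}\mid\mathcal{F}_t]+Z_t^{\ast}M_{t+1}$ and applying translation invariance with $q=E[Y_{t+1}\mid\mathcal{F}_t]$ gives
\[
Y_t=E[Y_{t+1}\mid\mathcal{F}_t]+G(Z_t^{\ast}M_{t+1}\mid\mathcal{F}_t),
\]
which is the one-step BSDE once I set $f(\omega,t,z):=G(z^{\ast}M_{t+1}\mid\mathcal{F}_t)$; backward induction then assembles the full equation (\ref{bsde-1}). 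I would define $f$ first for constant $z\in\mathbb{R}^m$ and then extend to the $\mathcal{F}_t$-measurable integrand $Z_t$: because $\mathcal{F}_t$ has only finitely many atoms, $Z_t$ is simple, and the local property (iv) applied atom-by-atom gives $G(Z_t^{\ast}M_{t+1}\mid\mathcal{F}_t)=f(t,Z_t)$.

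It then remains to check that this $f$ meets all the requirements of (ii) and is unique. Normalisation is $f(t,0)=G(0\mid\mathcal{F}_t)=0$; well-definedness under $\sim_M$ (condition (i) of Theorem \ref{Existence Uniqueness}) holds because $Z^1\sim_M Z^2$ forces $(Z^1)^{\ast}M_{t+1}=(Z^2)^{\ast}M_{t+1}$ a.s. by Definition \ref{def:simM}; the strict monotonicity and continuity of $y\mapsto y-f(t,z)$ are automatic since $f$ is $y$-independent. Uniqueness follows by running the construction on $\xi=z^{\ast}M_{t+1}$, which forces any representing driver to equal $G(z^{\ast}M_{t+1}\mid\mathcal{F}_t)$. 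The main obstacle I anticipate is not the algebra of the one-step relation but (a) passing rigorously from constant $z$ to the $\mathcal{F}_t$-measurable integrand $Z_t$ via the local property, and (b) deducing the structural comparison hypotheses (iii)--(iv) of Theorem \ref{Comparison Theorem1} for the constructed $f$ purely from the monotonicity axiom of $G$; both are where the finite-state discrete structure must be used carefully.
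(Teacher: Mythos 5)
The paper gives no proof of this theorem: it is imported verbatim as Theorem 7 of \cite{r8}, so there is nothing internal to compare against. Your proposal correctly reconstructs the standard argument used there: the easy direction checks the axioms of Definition \ref{def:gexp} via the comparison theorem, normalisation and uniqueness of BSDE solutions, and the hard direction builds $f(\omega,t,z)=G(z^{\ast}M_{t+1}\mid\mathcal{F}_{t})$ from the one-step martingale representation plus translation invariance, extending from constant $z$ to the integrand $Z_t$ atom-by-atom via the local property. The one step you flag as an anticipated obstacle, namely deriving condition (iii) of Theorem \ref{Comparison Theorem1} for the constructed $f$, does go through directly: since $X_{t+1}$ lies a.s.\ in $\{e_i: i\in\mathcal{Q}_t\}$, one has $(z^1-z^2)^{\ast}M_{t+1}\geq c:=\min_{i\in\mathcal{Q}_t}(z^1-z^2)^{\ast}(e_i-E[X_{t+1}\mid\mathcal{F}_t])$ pointwise with $c$ being $\mathcal{F}_t$-measurable, so monotonicity plus translation invariance give $f(t,z^1)\geq f(t,z^2)+c$; spelling this out would complete your sketch.
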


From \cite{r18} and \cite{r19}, an operator $G$ defined by a BSDE solution of this type is called a $g$-expectation. This result therefore shows that $g$-expectations and filtration consistent, translation invariant nonlinear expectations coincide. For the sake of brevity, we will therefore use the term $g$-expectation.

Now we study the Doob--Meyer decomposition and optional sampling
theorem under $g$-expectation.
\begin{definition}
 A process $\{X_t\}$ will be called a $g$-supermartingale  if $X_t\in L^1(\mathcal{F}_t; \mathbb{R})$ for all $t$ and  $X_s\geq G(X_t|\mathcal{F}_t)$ for all $s\leq t$. In a similar way, we define submartingales and martingales.
\end{definition}
Recall that, in this setting, a process $K$ is predictable if $K_{t+1}$ is $\mathcal{F}_t$-measurable for all $t$.
\begin{theorem}[Doob--Meyer Decomposition]
 Let $X$ be a $g$-supermartingale (resp. $g$-submartingale). Then there exists a unique predictable increasing (resp. decreasing) process $K$ such that $X+K$ is a $g$-martingale and $K_0=0$.
\end{theorem}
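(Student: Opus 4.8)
The plan is to imitate the classical Doob decomposition, with translation invariance of the $g$-expectation playing the role that linearity plays in the usual argument. First I would reduce everything to one-step conditional $g$-expectations: by monotonicity (property (i) of Definition \ref{def:gexp}) together with the tower property (property (iii)), a process is a $g$-supermartingale (resp.\ $g$-martingale) if and only if $X_t\geq G(X_{t+1}\mid\mathcal{F}_t)$ (resp.\ $X_t = G(X_{t+1}\mid\mathcal{F}_t)$) for every $t$; the general statement for $s\leq t$ then follows by iterating. This reduction turns the problem into an essentially algebraic one.

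For existence I would define $K$ explicitly by $K_0=0$ and
\[
K_{t+1}-K_t = X_t - G(X_{t+1}\mid\mathcal{F}_t).
\]
Each increment is $\mathcal{F}_t$-measurable (both $X_t$ and $G(X_{t+1}\mid\mathcal{F}_t)$ are), lies in $L^1$ (since $G$ maps into $L^1(\mathcal{F}_t;\mathbb{R})$), and is non-negative by the $g$-supermartingale inequality; hence $K$ is a predictable, increasing, integrable process vanishing at $0$. To verify that $X+K$ is a $g$-martingale I would invoke translation invariance (property (v)): as $K_{t+1}$ is $\mathcal{F}_t$-measurable,
\[
G(X_{t+1}+K_{t+1}\mid\mathcal{F}_t) = G(X_{t+1}\mid\mathcal{F}_t)+K_{t+1} = X_t-(K_{t+1}-K_t)+K_{t+1} = X_t+K_t,
\]
which is exactly the one-step $g$-martingale property for $X+K$, and the full statement for $s\leq t$ follows from the tower property. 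For uniqueness, running this computation in reverse shows that for \emph{any} predictable $K$ with $X+K$ a $g$-martingale, translation invariance and the $\mathcal{F}_t$-measurability of $K_{t+1}$ force $K_{t+1}-K_t = X_t - G(X_{t+1}\mid\mathcal{F}_t)$; since $K_0=0$ is prescribed, the increments and hence $K$ are determined. The submartingale case is identical after reversing the inequality, yielding a decreasing $K$.

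The main obstacle, though a mild one, is confirming that the construction produces a genuine $g$-martingale rather than merely a classical one: this is precisely where translation invariance is indispensable and where the argument departs from the linear Doob decomposition. I would stress that translation invariance is available because, by Theorem \ref{gexprepresentation}, every $g$-expectation is a filtration consistent, translation invariant nonlinear expectation (equivalently, its driver is independent of $y$). Integrability causes no trouble, since for each fixed time $L^1$ coincides with $L^\infty$ in the finite-state setting.
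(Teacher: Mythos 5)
Your proposal is correct and follows essentially the same route as the paper: define $K$ recursively through the one-step conditional $g$-expectation and use translation invariance, together with the $\mathcal{F}_t$-measurability of $K_{t+1}$, to verify both the $g$-martingale property and uniqueness. In fact your increment $K_{t+1}-K_t = X_t - G(X_{t+1}\mid\mathcal{F}_t)$ has the correct sign to make $K$ increasing for a $g$-supermartingale (the paper's displayed recursion has the sign reversed), and your write-up supplies the ``simple calculation'' the paper leaves to the reader.
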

\begin{proof}
 Let $K_0=0$ and define $K$ recursively by
\[K_{t+1} = K_{t} + G(X_{t+1}|\mathcal{F}_t) - X_t.\]
Then simple calculation verifies the result.
\end{proof}

In the following, we study the optional sampling theorem for
$g$-super and $g$-sub-martingales in our framework.

\begin{theorem}
 Let $X$ be a $g$-supermartingale. Then for any stopping times $\sigma, \tau$ with $0\leq \sigma\leq \tau \leq T$, we have
\[X_\sigma \geq G(X_\tau|\mathcal{F}_\sigma).\]
Similarly for $g$-submartingales.
\end{theorem}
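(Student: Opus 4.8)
The plan is to reduce the statement, which conditions on the $\sigma$-algebra $\mathcal{F}_\sigma$ attached to a stopping time, to a family of inequalities conditioned on the \emph{deterministic} $\sigma$-algebras $\mathcal{F}_t$, where the tower property of Definition \ref{def:gexp}(iii) is directly available. To this end I first fix the meaning of the conditional $g$-expectation at a stopping time by setting $G(\xi|\mathcal{F}_\sigma):=\sum_{t=0}^{T}I_{\{\sigma=t\}}\,G(\xi|\mathcal{F}_t)$; since $\{\sigma=t\}\in\mathcal{F}_t$, the local property (Definition \ref{def:gexp}(iv)) shows this is the natural object and that it inherits monotonicity, translation invariance and the local property from the deterministic operators. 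Note also that the one-step inequality $X_t\ge G(X_{t+1}|\mathcal{F}_t)$ is built into the definition of a $g$-supermartingale.

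The core of the argument is the auxiliary claim that for every \emph{deterministic} $t\in\{0,1,\dots,T\}$ and every stopping time $\tau$ one has $X_t\ge G(X_{\tau\vee t}\mid\mathcal{F}_t)$, which I would prove by backward induction on $t$. The case $t=T$ is immediate because $X_{\tau\vee T}=X_T$ is $\mathcal{F}_T$-measurable, so Definition \ref{def:gexp}(ii) gives equality. For the inductive step, writing $A=\{\tau\le t\}\in\mathcal{F}_t$ and $A^c=\{\tau\ge t+1\}$, I split $X_{\tau\vee t}=I_A X_t+I_{A^c}X_{\tau\vee(t+1)}$, which is valid because $\tau\vee t=\tau\vee(t+1)$ on $A^c$. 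Translation invariance (Definition \ref{def:gexp}(v)) peels off the $\mathcal{F}_t$-measurable summand $I_A X_t$, the local property turns $G(I_{A^c}X_{\tau\vee(t+1)}|\mathcal{F}_t)$ into $I_{A^c}G(X_{\tau\vee(t+1)}|\mathcal{F}_t)$, and then the deterministic tower property together with the induction hypothesis at $t+1$ and monotonicity yields $G(X_{\tau\vee(t+1)}|\mathcal{F}_t)\le G(X_{t+1}|\mathcal{F}_t)$, which the one-step supermartingale inequality bounds by $X_t$. Assembling these gives $G(X_{\tau\vee t}|\mathcal{F}_t)\le I_A X_t+I_{A^c}X_t=X_t$, completing the induction.

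Finally I would descend from the deterministic time to the stopping time $\sigma$. On $\{\sigma=t\}\subseteq\{\tau\ge t\}$ we have $X_{\tau\vee t}=X_\tau$, so the local property gives $I_{\{\sigma=t\}}G(X_\tau|\mathcal{F}_\sigma)=I_{\{\sigma=t\}}G(X_{\tau\vee t}|\mathcal{F}_t)\le I_{\{\sigma=t\}}X_t=I_{\{\sigma=t\}}X_\sigma$; summing over $t$ yields $G(X_\tau|\mathcal{F}_\sigma)\le X_\sigma$. The $g$-submartingale case is entirely symmetric, with every inequality reversed. The delicate point throughout is the bookkeeping with the conditional $g$-expectation at a stopping time: one must only ever localize on events measurable with respect to the deterministic $\sigma$-algebra being conditioned upon, so that Definition \ref{def:gexp}(iv)--(v) genuinely apply. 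Routing the argument through the deterministic claim $X_t\ge G(X_{\tau\vee t}|\mathcal{F}_t)$ is precisely what lets me use only the deterministic tower property and avoid having to establish a separate tower property for stopping times.
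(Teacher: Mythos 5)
Your proof is correct, but it follows a genuinely different route from the paper. The paper's argument goes through its machinery: it applies the Doob--Meyer decomposition to obtain an increasing predictable $K$ with $X+K$ a $g$-martingale, rewrites the resulting BSDE between $\sigma$ and $\tau$ with the perturbed driver $f^{K}(t,z)=f(t,z)+K_{t+1}-K_{t}\geq f(t,z)$, and invokes the comparison theorem to conclude $X_\sigma\geq Y_\sigma=G(X_\tau|\mathcal{F}_\sigma)$. You instead work purely at the level of the axioms in Definition \ref{def:gexp}: the backward-induction claim $X_t\geq G(X_{\tau\vee t}|\mathcal{F}_t)$, established via the decomposition $X_{\tau\vee t}=I_{\{\tau\leq t\}}X_t+I_{\{\tau\geq t+1\}}X_{\tau\vee(t+1)}$ together with translation invariance, the local property, the deterministic tower property and monotonicity, followed by localization on $\{\sigma=t\}$. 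Each approach has its advantages. The paper's proof is short given the tools already built and makes the supermartingale ``gap'' explicit as the accumulated increments of $K$, but it is terse about what the BSDE and the comparison theorem mean between two random times, and it needs the representation of $G$ as a BSDE solution. Your proof needs none of that: it applies to any filtration-consistent nonlinear expectation satisfying (i)--(v) (indeed, translation invariance can even be avoided, since $I_A G(X_{\tau\vee t}|\mathcal{F}_t)=G(I_A X_t|\mathcal{F}_t)=I_A X_t$ follows from (iv) and (ii) alone), it makes the definition of $G(\cdot|\mathcal{F}_\sigma)$ at a stopping time explicit rather than implicit, and it handles the random-time bookkeeping carefully by only ever localizing on events in the deterministic $\sigma$-algebra being conditioned on. The one-step supermartingale inequality you use is the correct reading of the paper's definition (which contains an evident typo, $G(X_t|\mathcal{F}_t)$ for $G(X_t|\mathcal{F}_s)$), so no issue arises there.
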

\begin{proof}
 From the Doob--Meyer decomposition, we know that there is an increasing process $K$ such that $X+K$ is a $g$-martingale. Hence, by the recursive nature of BSDEs and normalisation
\[\begin{split}X_\sigma + K_\sigma &= X_\tau + K_\tau + \sum_{\sigma\leq t<\tau} f(t, Z_t) - \sum_{\sigma\leq t<\tau} Z_t^*M_{t+1}\\
   &= X_\tau + K_\tau + \sum_{\sigma\leq t<T} f(t, Z_t) - \sum_{\sigma\leq t<T} Z_t^*M_{t+1}.
  \end{split}
\]
 Rearranging the terms, we have
\[X_\sigma  = X_\tau  + \sum_{\sigma\leq t<\tau} f^K(t, Z_t) - \sum_{\sigma\leq t<\tau} Z_t^*M_{t+1}\]
where
\[f^K(t, Z_t)= f(t, Z_t) + K_{t+1}-K_t.\]
As $K$ is an increasing predictable process, $f^K$ satisfies the requirements of Theorem \ref{Comparison Theorem}, and $f^K(t,Z_t) \geq f(t, Z_t)$. Therefore, $X_\sigma\geq G(X_\tau|\mathcal{F}_\sigma)=Y_\sigma$, where $Y_\sigma$ solves the BSDE
\[Y_\sigma  = Y_\tau  + \sum_{\sigma\leq t<\tau} f(t, Z_t) - \sum_{\sigma\leq t<\tau} Z_t^*M_{t+1}.\]
The corresponding argument when $X$ is a submartingale, and so $K$ is a decreasing process, also holds.
\end{proof}

\subsection{Multiple prior martingales and $g$-Martingales}

Riedel \cite{r22} developed a theory of optimal stopping under
multiple priors. He defined the a process $\{U_t\}$ to be a `multiple prior martingale' if it
satisfies
\[
U_{t}=\inf_{p\in\Lambda}E^{p}[U_{t+1}|\mathcal{F}_{t}],
\]
 where $\Lambda$ is a set of time-consistent measures, as defined in the following.

\begin{definition}\label{defnTimeconsistentMeasures}
 A family $\Lambda$ of probability measures will be called `time-consistent' if, for any $Q, Q'\in \Lambda$, any $A\in \mathcal{F}_t$, we have $Q''\in \Lambda$, where
\[Q''(B) = E_Q[ I_A E_{Q'}[ I_B|\mathcal{F}_t] + I_{A^c} I_B].\]
\end{definition}
See the $m$-stability of \cite{Delbaen}, Proposition 3.6 in \cite{NutzSoner}, Theorem 2.2 in \cite{r3} and Definition 13 in \cite{CohenAggregation} for discussion of this and related concepts. In particular, we have the following result, which is proven in each of these references in varying degrees of generality (any of which is sufficient for our setting here).
\begin{lemma}\label{timeconsistentequiv}
The family of operators $G(\cdot|\mathcal{F}_t)=\inf_{p\in\Lambda}E^{p}[U_{t+1}|\mathcal{F}_{t}]$ is filtration consistent (in particular satisfies (ii-iv) of Definition \ref{def:gexp}) if and only if $\Lambda$ is a time-consistent family of measures.
\end{lemma}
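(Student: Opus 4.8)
The plan is to observe that the lemma reduces, after stripping off the routine axioms, to the equivalence of the tower property (iii) of Definition \ref{def:gexp} with the pasting closure in Definition \ref{defnTimeconsistentMeasures}. First I would dispose of the easy axioms. Monotonicity (i) is immediate because each $E^p[\cdot|\mathcal{F}_t]$ is monotone and an infimum of monotone maps is monotone. The $\mathcal{F}_t$-measurable fixing (ii) holds since $E^p[\xi|\mathcal{F}_t]=\xi$ for every $p$ whenever $\xi$ is $\mathcal{F}_t$-measurable, and locality (iv) holds because for $A\in\mathcal{F}_t$ one may pull $I_A$ inside each conditional expectation and then outside the infimum: $G(I_A\xi|\mathcal{F}_t)=\inf_p I_A E^p[\xi|\mathcal{F}_t]=I_A G(\xi|\mathcal{F}_t)$. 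None of these uses time-consistency, so everything rests on (iii).

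For the direction ``$\Lambda$ time-consistent $\Rightarrow$ tower property'', one inequality is free: since $G(\xi|\mathcal{F}_t)\le E^p[\xi|\mathcal{F}_t]$ for every $p\in\Lambda$, applying $E^p[\cdot|\mathcal{F}_s]$ and the classical tower property gives $E^p[G(\xi|\mathcal{F}_t)|\mathcal{F}_s]\le E^p[\xi|\mathcal{F}_s]$, and taking the infimum over $p$ yields $G(G(\xi|\mathcal{F}_t)|\mathcal{F}_s)\le G(\xi|\mathcal{F}_s)$. The reverse inequality is where pasting enters. Fixing $\epsilon>0$, I would choose $Q\in\Lambda$ nearly attaining the outer infimum defining $G(G(\xi|\mathcal{F}_t)|\mathcal{F}_s)$ and $Q'\in\Lambda$ nearly attaining the inner infimum defining $G(\xi|\mathcal{F}_t)$, then form the pasted measure $Q''$ of Definition \ref{defnTimeconsistentMeasures} with $A=\Omega$. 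Since $Q''$ coincides with $Q$ on $\mathcal{F}_t$ and with $Q'$ conditionally on $\mathcal{F}_t$, one has $E^{Q''}[\xi|\mathcal{F}_s]=E^Q[E^{Q'}[\xi|\mathcal{F}_t]|\mathcal{F}_s]\le E^Q[G(\xi|\mathcal{F}_t)|\mathcal{F}_s]+O(\epsilon)$, whence $G(\xi|\mathcal{F}_s)\le G(G(\xi|\mathcal{F}_t)|\mathcal{F}_s)$. Selecting a single $Q'$ that approximates the inner essential infimum simultaneously requires that the family $\{E^{Q'}[\xi|\mathcal{F}_t]:Q'\in\Lambda\}$ be downward directed, which follows by pasting at a general $A\in\mathcal{F}_t$; in the present finite-state, finite-horizon model this is automatic, since the essential infimum is just a pointwise minimum over the finitely many atoms and is realised by pasting two measures on the relevant $\mathcal{F}_t$-set.

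For the converse, assuming $G$ is filtration consistent I would show that $\Lambda$, taken as its maximal representing family, is closed under pasting. Given $Q,Q'\in\Lambda$ and $A\in\mathcal{F}_t$, the pasted $Q''$ satisfies $E^{Q''}[\xi|\mathcal{F}_s]\ge G(\xi|\mathcal{F}_s)$ for all $\xi$ and all $s$, the only nontrivial case being $s\le t$, where the tower property together with locality (iv) forces $E^{Q''}[\xi|\mathcal{F}_s]\ge G(G(\xi|\mathcal{F}_t)|\mathcal{F}_s)=G(\xi|\mathcal{F}_s)$; hence $Q''$ belongs to the representing family, i.e. $\Lambda$ is time-consistent. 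I expect the main obstacle to be the interchange of the infimum with conditional expectation in the reverse tower inequality, that is, justifying the passage from a pointwise-optimal choice of inner measure to a single near-optimal $Q'$ and the ensuing pasting. In full generality this is precisely the downward-directedness argument underlying $m$-stability; for the finite model here it collapses to a finite minimisation, and for the general statement I would appeal to the equivalences established in \cite{Delbaen,NutzSoner,r3,CohenAggregation}, any of which covers the present setting.
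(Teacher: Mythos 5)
Your argument is correct, but it takes a genuinely different route from the paper: the paper does not prove this lemma at all, instead deferring to the literature (``proven in each of these references in varying degrees of generality'' --- the $m$-stability result of \cite{Delbaen}, Proposition 3.6 of \cite{NutzSoner}, Theorem 2.2 of \cite{r3}, and \cite{CohenAggregation}). What you have written is essentially a self-contained reconstruction of the standard $m$-stability argument from those references, and all the key ingredients are in the right places: properties (i), (ii) and (iv) of Definition \ref{def:gexp} hold for any $\Lambda$ without pasting; the inequality $G(G(\xi|\mathcal{F}_t)|\mathcal{F}_s)\leq G(\xi|\mathcal{F}_s)$ is free from the classical tower property under each fixed measure; the reverse inequality uses the pasted measure of Definition \ref{defnTimeconsistentMeasures} with $A=\Omega$ together with downward-directedness of $\{E^{Q'}[\xi|\mathcal{F}_t]:Q'\in\Lambda\}$, which is itself obtained by pasting over the set $A=\{E^{Q'_1}[\xi|\mathcal{F}_t]\leq E^{Q'_2}[\xi|\mathcal{F}_t]\}$ (and, in the present finite-state, finite-horizon setting, by pasting over the finitely many atoms of $\mathcal{F}_t$); and the converse checks that the pasted measure again dominates $G$ at every time. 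Your approach buys a self-contained proof in the discrete setting at the cost of some length; the paper's approach buys brevity at the cost of sending the reader elsewhere.

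One point deserves emphasis, which you correctly flag but which the lemma's literal statement glosses over: the ``only if'' direction is not true for an arbitrary representing family $\Lambda$, since distinct families can induce the same operator $G$ and only the maximal (saturated) representing family $\{Q: E^{Q}[\xi|\mathcal{F}_s]\geq G(\xi|\mathcal{F}_s)\ \text{for all}\ \xi, s\}$ is guaranteed to be closed under pasting. Your converse argument proves exactly that statement, and your verification that the pasted measure $Q''$ satisfies $E^{Q''}[\xi|\mathcal{F}_s]\geq G(\xi|\mathcal{F}_s)$ (splitting into $s\leq t$, where $Q''$ agrees with $Q$ on $\mathcal{F}_t$ and the tower property is invoked, and $s>t$, where the conditional law is that of $Q$ or $Q'$ according to $A$) is sound. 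If you were to write this up formally you should state that reading of the converse explicitly, and in the $\varepsilon$-argument for the hard tower inequality you should spell out that a single $\varepsilon$-optimal $Q'$ is obtained by iterating the two-measure pasting over the atoms of $\mathcal{F}_t$; neither is a gap, only a matter of making the sketch precise.
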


In this subsection, we will study special cases of multiple prior martingales using the theory of (R)BSDEs.

We denote by $\mathcal{Q}$ the set of all probability measures
$Q\sim P$. For any $Q\in\mathcal{Q}$, set
\[W_{t}:=E^P\Big[\frac{dQ}{dP}\Big|\mathcal{F}_{t}\Big].\]
Then $W_{t}$ is a martingale and $W_0=1$. By Theorem \ref{MRT}, there exists an adapted process $z$ such that
\[W_{t}=1+\sum_{0\leq s<t}z_{s}^{\ast}M_{s+1}.\]

Let
\[\begin{split}
\psi_t&=\var(X_{t+1}|\mathcal{F}_{t})= E[X_{t+1}X_{t+1}^{\ast}|\mathcal{F}
_{t}]-E[X_{t+1}|\mathcal{F}_{t}]E[X_{t+1}^{\ast}|\mathcal{F}_{t}]\\
&=E[M_{t+1} M_{t+1}^*|\mathcal{F}_t],\end{split}
\]
so $\psi$ is a symmetric positive semidefinite matrix with null space orthogonal to the space of $Q$-vectors. We note that $\psi$ also appeared in Definition \ref{def:simM}. As the $z$ process in the martingale representation theorem is only defined up to equivalence $\sim_M$, writing
\[\theta_t = \frac{1}{W_t} \psi_t z_t,\]
and $\psi^+$ for the Moore--Penrose pseudoinverse of $\psi$, we have $\{W_t \psi_t^+ \theta_t\}_{0\leq t\leq T} \sim_M z$. Without loss of generality, we can take $\theta_t$ to be a $Q$-vector.

Therefore, we can write
\[W_{t}=\prod_{0\leq s<t}(1+\theta_{s}^*\psi_t^+ M_{s+1})\]
and
\begin{equation}
\frac{dQ}{dP}=W_{T}=\prod_{0\leq s<T}(1+\theta_{s}^*\psi_t^+M_{s+1}). \label{kn-1}
\end{equation}
Thus, for any $Q\in\mathcal{Q}$, $\frac{dQ}{dP}$ can be generated by
$(\theta_{t})$ through (\ref{kn-1}). The probability measure generated by
$(\theta_{t})$ is denoted by $Q^{\theta}$. It is classical that this is a probability measure if and only if $\theta_{s}^*\psi_s^+M_{s+1} >-1$ a.s.%, that is,
%\[\theta_s^* E[X_{s+1}|\mathcal{F}_s] < (\theta_s)_i +1\]
%almost surely, for all $s\in\{0,\ldots,T-1\}$ and $i\in\{1,\ldots,m\}$, where $(\theta_s)_i$ denotes the $i$th component of $\theta_s$. %Without loss of generality, we can take $\theta_s$ to be a $Q$-vector, in the sense of Definition \ref{defQvector}. Let $\Theta$ denote the set of $Q$-vectors satisfying this requirement.

Basic calculation yields that
\[E_{Q^\theta}[X_{t+1}|\mathcal{F}_t] = E[(1+\theta_t^*\psi_t^+ M_{t+1})X_{t+1}] = E[X_{t+1}|\mathcal{F}_t]+\psi_t^+\psi_t\cdot \theta_t.\]
In particular, for $\theta_t$ a $Q$-vector (and so orthogonal to the null space of $\psi$),
\[E_{Q^{\theta}}[M_{t+1}|\mathcal{F}_t] = \theta_t.\]
\begin{proposition}\label{BSDEinfexp}
If $\Lambda$ is a time consistent family of measures, by Lemma \ref{timeconsistentequiv} and Theorem \ref{gexprepresentation}, the process $Y_t = G(\xi|\mathcal{F}_t)$ solves a BSDE with terminal value $Y_T=\xi$ and driver
\[f(\omega,t,z) = \inf_{\{\theta:Q^{\theta}\in\Lambda\}}\{z^*\theta_t\}.\]
 \end{proposition}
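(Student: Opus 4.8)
The plan is to combine the abstract representation theorem (Theorem \ref{gexprepresentation}) with the explicit computation of $E_{Q^{\theta}}[M_{t+1}|\mathcal{F}_t]=\theta_t$ carried out just above the statement. By Lemma \ref{timeconsistentequiv}, the time-consistency of $\Lambda$ guarantees that $G(\cdot|\mathcal{F}_t)=\inf_{p\in\Lambda}E^p[\cdot|\mathcal{F}_t]$ is a filtration consistent nonlinear expectation. I would first check that it is also translation invariant: for $q\in L^1(\mathcal{F}_t;\mathbb{R})$, each $E^p[\cdot|\mathcal{F}_t]$ is linear and fixes $\mathcal{F}_t$-measurable random variables, so $E^p[\xi+q|\mathcal{F}_t]=E^p[\xi|\mathcal{F}_t]+q$, and taking the infimum over $p\in\Lambda$ preserves this additive constant. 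Hence $G$ satisfies all hypotheses of Theorem \ref{gexprepresentation}(i), and therefore $Y_t=G(\xi|\mathcal{F}_t)$ solves a BSDE whose driver $f$ is \emph{unique} and recovered from the relation $f(\omega,t,z)=G(z^*M_{t+1}|\mathcal{F}_t)$.

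The core of the argument is then to evaluate this formula. I would write
\[
f(\omega,t,z)=G(z^*M_{t+1}|\mathcal{F}_t)=\inf_{p\in\Lambda}E^p[z^*M_{t+1}|\mathcal{F}_t].
\]
Every $p\in\Lambda$ is equivalent to $P$ (since $\Lambda\subseteq\mathcal{Q}$), so it is of the form $Q^{\theta}$ for some density-generating process $\theta$, as established in (\ref{kn-1}); the admissible $\theta$ are exactly those with $Q^{\theta}\in\Lambda$. Using linearity of the conditional expectation to pull the deterministic (given $\mathcal{F}_t$) vector $z$ outside, together with the identity $E_{Q^{\theta}}[M_{t+1}|\mathcal{F}_t]=\theta_t$ for $Q$-vector $\theta_t$, I obtain
\[
E_{Q^{\theta}}[z^*M_{t+1}|\mathcal{F}_t]=z^*E_{Q^{\theta}}[M_{t+1}|\mathcal{F}_t]=z^*\theta_t.
\]
Taking the infimum over $\{\theta:Q^{\theta}\in\Lambda\}$ yields exactly $f(\omega,t,z)=\inf_{\{\theta:Q^{\theta}\in\Lambda\}}\{z^*\theta_t\}$, which is the claimed driver.

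The main obstacle is confirming that the driver obtained this way genuinely satisfies the regularity hypotheses of Theorem \ref{gexprepresentation}(ii), so that the representation is legitimate and $f$ is the \emph{unique} such driver. Concretely, I need $f$ to satisfy the conditions of Theorem \ref{Existence Uniqueness} and conditions (iii)--(iv) of Theorem \ref{Comparison Theorem1}, and to be normalised. Normalisation is immediate, since $f(\omega,t,0)=\inf_\theta\{0\}=0$. The remaining conditions are structural consequences of $G$ being a filtration consistent, translation invariant nonlinear expectation, and are precisely the content of the equivalence in Theorem \ref{gexprepresentation}; the subtlety is that one should verify the infimum is attained (equivalently, that the relevant set of $Q$-vectors $\{\theta:Q^{\theta}\in\Lambda\}$ is suitably compact/closed), which is what makes $z\mapsto\inf_\theta z^*\theta_t$ a well-behaved, in fact piecewise-linear concave, driver. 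Since $\Lambda$ arises from a time-consistent family and the state space is finite, this attainment is not hard, but it is the step that deserves care.
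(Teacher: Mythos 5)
Your proposal is correct and follows essentially the same route the paper intends: the paper gives no separate proof, treating the result as an immediate combination of Lemma \ref{timeconsistentequiv} (filtration consistency of the infimum operator), Theorem \ref{gexprepresentation} (representation with driver $f(\omega,t,z)=G(z^*M_{t+1}|\mathcal{F}_t)$), and the identity $E_{Q^{\theta}}[M_{t+1}|\mathcal{F}_t]=\theta_t$ derived just above the statement. Your additional checks of translation invariance and attainment of the infimum are sensible elaborations of steps the paper leaves implicit, not a different argument.
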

Conversely, we can verify that $Q^\theta$ defined in this way is a probability measure (absolutely continuous with respect to $P$) provided that
\begin{itemize}
\item $\theta_t$ is a.s. a $Q$-vector for all $t$ and
\item $0 \leq E[X_{t+1}|\mathcal{F}_t] + \theta_t \leq 1$ a.s., the inequality being taken componentwise.
\end{itemize}
The measure $Q^\theta$ is equivalent to $P$ if and only if the inequality is strict in all components where $\theta_t\neq 0$.

\subsubsection{$\kappa$-ignorance model}
We now consider a concrete example, inspired by the $\kappa$-ignorance model in \cite{r3}. Suppose there exists a nonnegative process $\kappa_t$ such that
\[0 \leq E[X_{t+1}|\mathcal{F}_t] +\theta_t \leq 1\]
for all $Q$-vectors $\theta_t$ with $\|\theta_t\|_M \leq \kappa_t$. Consider the associated set of probability measures
\begin{equation}
\mathcal{B}=\Big\{Q^{\theta}: \|\theta_t\|_M \leq \kappa_t\, P\text{-a.s. for all }t\}.
\label{kn-2}%
\end{equation}

\begin{lemma}
$\mathcal{B}$ is a time consistent family of measures.
\end{lemma}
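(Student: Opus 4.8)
The plan is to verify the pasting condition of Definition \ref{defnTimeconsistentMeasures} directly at the level of the density generators, using the explicit representation (\ref{kn-1}). Fix $Q=Q^{\theta}$ and $Q'=Q^{\theta'}$ in $\mathcal{B}$, a time $r$, and a set $A\in\mathcal{F}_r$, and let $Q''$ be the measure obtained by pasting. Write $W,W'$ for the $P$-density processes of $Q,Q'$ respectively, so that $W_r=\prod_{0\leq s<r}(1+\theta_s^{*}\psi_s^{+}M_{s+1})$, and similarly for $W'$. The first step is to compute $dQ''/dP$ explicitly. For $B\in\mathcal{F}_T$ I would substitute $E_{Q'}[I_B|\mathcal{F}_r]=E_P[W'_T I_B|\mathcal{F}_r]/W'_r$ and $E_Q[\,\cdot\,]=E_P[W_T\,\cdot\,]$ into the defining formula for $Q''(B)$; then, using that $A$ (hence $I_A/W'_r$) is $\mathcal{F}_r$-measurable together with the martingale identity $E_P[W_T|\mathcal{F}_r]=W_r$ and the tower property, the first term collapses and one obtains
\[
\frac{dQ''}{dP}=I_A\,\frac{W_r}{W'_r}\,W'_T+I_{A^c}\,W_T .
\]

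The second step is to recognise this density as a single product of the form (\ref{kn-1}). Since $W'_T/W'_r=\prod_{r\leq s<T}(1+\theta'^{*}_s\psi_s^{+}M_{s+1})$, the right-hand side above is exactly the density generated by the spliced process
\[
\theta''_s=\theta_s\,1_{\{s<r\}}+\big(I_A\,\theta'_s+I_{A^c}\,\theta_s\big)\,1_{\{s\geq r\}} .
\]
Because $A\in\mathcal{F}_r\subseteq\mathcal{F}_s$ for every $s\geq r$, and $\theta,\theta'$ are adapted $Q$-vector processes, $\theta''$ is again an adapted $Q$-vector process in the sense of Definition \ref{defQvector}; hence $Q''=Q^{\theta''}$.

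The final step is to check the constraint $\|\theta''_t\|_M\leq\kappa_t$ $P$-a.s. for every $t$, which places $Q''$ in the set (\ref{kn-2}). For $t<r$ we simply have $\theta''_t=\theta_t$. For $t\geq r$, at each $\omega$ the vector $\theta''_t(\omega)$ equals $\theta'_t(\omega)$ on $A$ and $\theta_t(\omega)$ on $A^c$, so pointwise it coincides with one of two vectors whose $\|\cdot\|_M$-norm is at most $\kappa_t(\omega)$; therefore $\|\theta''_t\|_M\leq\kappa_t$ everywhere, and $Q''\in\mathcal{B}$.

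I expect the main obstacle to be the density computation of the first step: keeping the conditioning levels straight and correctly exploiting $A\in\mathcal{F}_r$ together with the martingale property to reduce $E_Q[I_A\,E_{Q'}[\,\cdot\,|\mathcal{F}_r]]$ to the clean product form. Once the density is identified with the single spliced generator $\theta''$, the admissibility check is immediate, and the conceptual reason is worth emphasising: the constraint defining $\mathcal{B}$ is imposed separately and pointwise at each $(t,\omega)$, i.e. $\mathcal{B}$ is rectangular, so splicing two admissible generators along an $\mathcal{F}_r$-set automatically yields an admissible generator.
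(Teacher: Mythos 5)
Your proof is correct and follows essentially the same route as the paper: paste the two measures by splicing the density generators at the given time on the set $A$, identify the result as $Q^{\theta''}$ via the product representation (\ref{kn-1}), and check that the spliced generator still satisfies the pointwise constraint $\|\theta''_t\|_M\leq\kappa_t$. You are more explicit than the paper, which asserts the form of $\theta''$ without the density computation; note also that your formula $\theta''_s=I_A\theta'_s+I_{A^c}\theta_s$ on $\{s\geq r\}$ is the correct per-period expression, whereas the paper's displayed generator is written with increments $\theta'_s-\theta'_t$ as if the generators were cumulative.
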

\begin{proof}
If $Q^\theta,Q^{\theta'} \in \mathcal{B}$, then for any time $t$, any $A\in \mathcal{F}_t$, the measure defined by
\[Q''(B) = E_{Q^\theta}[I_AE_{Q^{\theta'}}[I_B|\mathcal{F}_t]+I_{A^c} I_B]\]
will have representation $Q'' = Q^{\theta''}$, where
\[\theta''_s = \theta_{s\wedge t} + ((\theta_s-\theta_t)I_{A^c} + (\theta'_s - \theta'_t)I_{A})I_{s>t}.\]
Therefore, as $\theta''$ will also satisfy $\|\theta''_s\|_M\leq \kappa_s$,  the measure $Q''$ is also in $\mathcal{B}$.
\end{proof}

\begin{definition}
\label{min-1}Suppose $\xi\in L^{1}($$\mathcal{F}_{T};R)$. Let
\[
G(\xi|\mathcal{F}_{t})=\inf_{Q\in\mathcal{B}}\{E_{Q}[\xi|\mathcal{F}_{t}]\},\qquad 0\leq t\leq T.
\]
Then we call $G(\xi|\mathcal{F}_{t})$ the minimal conditional expectation of $\xi$ about $\mathcal{B}$.
Similarly, we can define the corresponding maximal conditional expectation.
\end{definition}

As $\mathcal{B}$ is a time consistent family, by Lemma \ref{timeconsistentequiv} and Theorem \ref{gexprepresentation}, we see the following relation.

\begin{theorem}
\label{gmax-1}For any $\xi\in L^{1}(\mathcal{F}_{T};R)$,  $Y_s = G(\xi|\mathcal{F}_s)$ is the solution to the BSDE
\[
Y_t = \xi - \sum_{t\leq s < T} \kappa_s \|Z_s\|_M -  \sum_{t\leq s < T} Z_s^* M_{s+1}
\]
\end{theorem}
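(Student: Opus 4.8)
The plan is to recognise $G(\cdot|\mathcal{F}_t)=\inf_{Q\in\mathcal{B}}E_Q[\cdot|\mathcal{F}_t]$ as a $g$-expectation and to compute its driver explicitly as $-\kappa_t\|z\|_M$. The preceding lemma shows $\mathcal{B}$ is a time-consistent family, so by Lemma \ref{timeconsistentequiv} and Theorem \ref{gexprepresentation} the operator $G$ is a filtration consistent, translation invariant nonlinear expectation and $Y_s=G(\xi|\mathcal{F}_s)$ solves a BSDE with terminal value $\xi$. Its unique normalised driver is, by Proposition \ref{BSDEinfexp},
\[
f(\omega,t,z)=\inf_{\{\theta:\,Q^\theta\in\mathcal{B}\}}z^*\theta_t=\inf_{\theta_t\in\mathbb{R}^m_{Q,t},\ \|\theta_t\|_M\leq\kappa_t}z^*\theta_t ,
\]
where the second equality uses the standing hypothesis of the $\kappa$-ignorance model: every $Q$-vector $\theta_t$ with $\|\theta_t\|_M\leq\kappa_t$ obeys $0\leq E[X_{t+1}|\mathcal{F}_t]+\theta_t\leq1$, so such $\theta_t$ generate genuine measures and the feasible set is exactly the $M$-ball of radius $\kappa_t$ inside $\mathbb{R}^m_{Q,t}$.

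The crux, and the step I expect to require the most care, is evaluating this constrained infimum: minimising the linear functional $\theta_t\mapsto z^*\theta_t$ over an ellipsoidal ball cut out by the conditional quadratic form $\psi_t=E[M_{t+1}M_{t+1}^*|\mathcal{F}_t]$ on the $Q$-vector subspace. Since $\psi_t$ is only positive semidefinite on $\mathbb{R}^m$, with null space orthogonal to $\mathbb{R}^m_{Q,t}$, I would restrict attention to $\mathbb{R}^m_{Q,t}$, where $\psi_t$ is positive definite, and work through its Moore--Penrose pseudoinverse $\psi_t^+$, under which the constraint $\|\theta_t\|_M\leq\kappa_t$ reads $\theta_t^*\psi_t^+\theta_t\leq\kappa_t^2$. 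A Lagrange-multiplier or Cauchy--Schwarz argument then identifies the minimiser as the $Q$-vector $\bar\theta_t=-\kappa_t\,\psi_t z/\|z\|_M$ when $\|z\|_M\neq0$ (with any feasible point serving when $z$ is $\psi_t$-null); after checking that $\bar\theta_t$ lies in the feasible ball, substituting gives
\[
\inf_{\theta_t\in\mathbb{R}^m_{Q,t},\ \|\theta_t\|_M\leq\kappa_t}z^*\theta_t=-\kappa_t\|z\|_M .
\]

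Finally, inserting $f(\omega,s,z)=-\kappa_s\|z\|_M$ into the general BSDE form (\ref{bsde-1}) with terminal condition $Y_T=\xi$ yields exactly
\[
Y_t=\xi-\sum_{t\leq s<T}\kappa_s\|Z_s\|_M-\sum_{t\leq s<T}Z_s^*M_{s+1},
\]
as claimed. Beyond the optimisation, the only bookkeeping to watch is that the $M$-norm bounding the priors $\theta_t$ is dual---via $\psi_t^+$ against $\psi_t$---to the quadratic form defining $\|Z_s\|_M$, and it is precisely this duality that makes $\kappa_s$ and $\|Z_s\|_M$ pair up correctly in the driver.
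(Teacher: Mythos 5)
Your proposal is correct and follows essentially the same route as the paper: identify $G$ as a filtration consistent, translation invariant nonlinear expectation via the time consistency of $\mathcal{B}$, read off the driver $f(\omega,t,z)=\inf_{\theta:\|\theta\|_M\leq\kappa_t}z^*\theta$ from Theorem \ref{gexprepresentation}, and evaluate the infimum by Cauchy--Schwarz at $\bar\theta_t=-\kappa_t\psi_t z/\|z\|_M$ to get $-\kappa_t\|z\|_M$. Your only addition is to spell out explicitly that the constraint on $\theta_t$ is the dual quadratic form $\theta_t^*\psi_t^+\theta_t\leq\kappa_t^2$ on $\mathbb{R}^m_{Q,t}$, which the paper leaves implicit but which is exactly what makes the computation close.
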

\begin{proof}
We know from Theorem \ref{gexprepresentation} that $Y_s = G(\xi|\mathcal{F}_s)$ solves a BSDE with driver $f(\omega, t, z ) = G(z^* M_{t+1}|\mathcal{F}_t)$. Hence
\[f(\omega, t, z )= \inf_{Q\in\mathcal{B}}\{E_{Q}[z^* M_{t+1}|\mathcal{F}_{t}]\} = \inf_{\theta: \|\theta\|_M\leq \kappa_t}\{z^*\theta\}.\]
By the Cauchy--Schwartz inequality, this infimum is realised at $\theta \sim_M -\kappa\psi z$, where we have $f(\omega, t, z )=- \kappa_t \|z\|_M$.
\end{proof}
\subsubsection{Scenario perturbation model}
An alternative similar model is where a collection of perturbation vector processes $\{\pi^i\}_{i=1}^n$ are given, each of which takes values in the probability vectors in $\mathbb{R}^m$. We assume these are absolutely continuous with respect to $\pi^0_t:=E[X_{t+1}|\mathcal{F}_t]$, in the sense that if a component of $\pi^0_t$ is zero, then so is the corresponding component of $\pi^i_t$. These vectors can be thought of as `scenarios', or mixtures of scenarios, which with we will stress-test our outcome, and correspond to measures where $X_{t+1}$ has $\mathcal{F}_t$ conditional expectation $\pi^i_t$.

For a given  parameter $\kappa\leq 1$, we define a scenario perturbation measure to be a measure $Q^\theta$ where
\[\theta_t = \lambda_t (\pi^{\mathfrak{i}(t)}_t - \pi^0_t)\]
for some adapted processes $\lambda$ and $\mathfrak{i}$ with $\lambda_t\leq\kappa$ and $\mathfrak{i}(t)\in\{0,\ldots, n\}$. Again, one can verify that the associated family of measures is time consistent, and the corresponding minimal conditional expectations are given by the BSDE solutions
\[G(\xi|\mathcal{F}_t)=Y_t = \xi - \sum_{t\leq s < T} \kappa\min_i\{Z_s^*(\pi^{i}_s-\pi^0_s)\} -  \sum_{t\leq s < T} Z_s^* M_{s+1}\]

\subsection{Nonlinear expectations and optimal stopping}

Riedel \cite{r22} considered the optimal stopping problem under
ambiguity as follows:
\[
\text{maximize }\inf_{Q\in\Lambda}E^{Q}[U_{\tau}]\quad\text{over all
stopping times}\quad\tau\leq T
\]
for a finite horizon $T<\infty$, where $\Lambda$ is a time-consistent set of priors and
$(U_{t})_{t\in\mathcal{N}}$ is an essentially bounded and adapted process.

To solve the above problem, Riedel \cite{r22} introduced the
multiple prior Snell envelope $\bar{U}$ defined by $\bar{U}_{T}=U_{T}$ and
\[
\bar{U}_{t}=\max\{U_{t},\inf_{Q\in\mathcal{B}}E^{Q}[\bar{U}_{t+1}%
|\mathcal{F}_{t}]\},\;t\in\{0,1,...,T-1\}.
\]

We now study the relation between the multiple prior Snell envelope and RBSDEs. For a given time consistent family of measures $\Lambda$, let
\[\Theta= \{\theta: Q^\theta\in\Lambda\}\cap\{Q\text{-vectors}\}.\]
Consider the following RBSDE:
\begin{equation}
\begin{cases}
Y_{t}=Y_{t+1}+\inf_{\theta\in\Theta}\{Z_t^*\theta_t\}-Z_{t}^{\ast}M_{t+1}+K_{t+1}-K_{t}\\
Y_{T}=U_{T},\\
Y_{t}\geq U_{t},\\
(Y_{t}-U_{t})(K_{t+1}-K_{t})=0
\end{cases}
\label{kn-7}
\end{equation}

By Theorem \ref{Existence Uniqueness}, provided the infimum is almost surely finite (which is guaranteed by the fact that $\theta$ generates a measure), (\ref{kn-7}) has a unique solution $(Y_{t},Z_{t},K_{t})$. }

\begin{theorem}
\label{mutip copy(1)} Suppose $U_{T}\in L^{1}(\mathcal{F}_{T};R)$. Then the solution $Y_{t}$ of (\ref{kn-7}) is the multiple prior Snell envelope of $U$ with multiple
prior set $\Lambda$, that is, $Y=\bar U$.
\end{theorem}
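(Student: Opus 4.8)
The plan is to show that the RBSDE solution $Y$ and the multiple prior Snell envelope $\bar U$ satisfy the same backward recursion with the same terminal condition, and then conclude by backward induction that they coincide. The key observation is that Proposition \ref{proposiztation solution} gives an explicit one-step characterization of the RBSDE solution. Specifically, applying that proposition with the (adapted, fixed-in-$Y$) coefficient $\alpha_t = \inf_{\theta\in\Theta}\{Z_t^*\theta_t\}$ treated as the driver evaluated at the solution, the solution of (\ref{kn-7}) satisfies
\[
Y_t = U_t \vee \big(\inf_{\theta\in\Theta}\{Z_t^*\theta_t\} + E[Y_{t+1}|\mathcal{F}_t]\big).
\]
So the first step is to rewrite the obstacle-reflection dynamics as this pointwise maximum.

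\textbf{The main work} is to identify the bracketed term with the multiple-prior conditional expectation $\inf_{Q\in\Lambda} E^Q[Y_{t+1}|\mathcal{F}_t]$. Here I would invoke the computation preceding Proposition \ref{BSDEinfexp}: for a $Q$-vector $\theta_t$ generating $Q^\theta$, we have $E_{Q^\theta}[M_{t+1}|\mathcal{F}_t] = \theta_t$, and more generally the change of measure shifts the conditional expectation of $Y_{t+1}$ by exactly $Z_t^*\theta_t$ once $Z$ is the martingale representation integrand of $Y_{t+1} - E[Y_{t+1}|\mathcal{F}_t]$. Concretely, writing $Y_{t+1} - E[Y_{t+1}|\mathcal{F}_t] = Z_t^* M_{t+1}$ (up to $\sim_M$), one gets
\[
E_{Q^\theta}[Y_{t+1}|\mathcal{F}_t] = E[Y_{t+1}|\mathcal{F}_t] + Z_t^*\theta_t,
\]
so that taking the infimum over $\theta\in\Theta$ reproduces $\inf_{Q\in\Lambda}E^Q[Y_{t+1}|\mathcal{F}_t]$ (using that $\Theta$ ranges exactly over the $Q$-vectors $\theta$ with $Q^\theta\in\Lambda$). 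This turns the one-step RBSDE relation into
\[
Y_t = U_t \vee \inf_{Q\in\Lambda} E^Q[Y_{t+1}|\mathcal{F}_t] = \max\{U_t,\ \inf_{Q\in\Lambda}E^Q[Y_{t+1}|\mathcal{F}_t]\},
\]
which is precisely the defining recursion of $\bar U$.

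\textbf{With the recursion matched,} the conclusion follows by a standard backward induction: at $t=T$ both $Y_T = U_T = \bar U_T$ by the terminal conditions in (\ref{kn-7}) and in the definition of the Snell envelope; and if $Y_{t+1} = \bar U_{t+1}$ $P$-a.s., then the identical one-step maps applied to the same input yield $Y_t = \bar U_t$ $P$-a.s. Uniqueness of the RBSDE solution (Theorem \ref{Existence Uniqueness}), which is cited just before the statement, guarantees there is no ambiguity in which $Y$ we are comparing.

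\textbf{The step I expect to be the main obstacle} is the measure-change identity $E_{Q^\theta}[Y_{t+1}|\mathcal{F}_t] = E[Y_{t+1}|\mathcal{F}_t] + Z_t^*\theta_t$, because it requires care about the $\sim_M$ equivalence class of $Z$ and about restricting $\theta$ to genuine $Q$-vectors. One must check that replacing $Z_t$ by any $\sim_M$-equivalent representative does not change $Z_t^*\theta_t$ for $Q$-vector $\theta_t$ (which holds since the two differ only by a constant and by components on $e_i\notin\mathcal{Q}_t$, both annihilated against a $Q$-vector), and that the parametrization $\theta\mapsto Q^\theta$ correctly identifies $\Theta$ with $\{Q\in\Lambda\}$ so that the two infima agree rather than merely one dominating the other. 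Once this identification is secured, the remainder is a routine induction.
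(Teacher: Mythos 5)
Your proof is correct and takes essentially the same route as the paper: the paper likewise reduces (\ref{kn-7}) to the one-step relation $Y_t = U_t\vee\big(\inf_{\theta\in\Theta}\{Z_t^*\theta_t\}+E[Y_{t+1}|\mathcal{F}_t]\big)$ (citing Theorem \ref{convexdriverRBSDE}, which itself rests on the Proposition \ref{proposiztation solution} you invoke directly) and then identifies the second term with $\inf_{Q\in\Lambda}E^Q[Y_{t+1}|\mathcal{F}_t]$, there by appeal to Theorem \ref{gexprepresentation} where you instead carry out the change-of-measure computation $E_{Q^\theta}[Y_{t+1}|\mathcal{F}_t]=E[Y_{t+1}|\mathcal{F}_t]+Z_t^*\theta_t$ explicitly. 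The only difference is which packaged intermediate results are cited; the underlying argument, including the concluding backward induction against the Snell-envelope recursion, is the same.
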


\begin{proof}
By Theorem \ref{gexprepresentation}, we know that for any $\xi\in L^1(\mathcal{F}_{t+1};\mathbb{R})$
\[
\inf_{Q\in\Lambda}E^Q[\xi|\mathcal{F}_t] = \inf_{\theta\in\Theta}\{z^*\theta_t\}+E[\xi|\mathcal{F}_{t}]
\]
where $z^{\ast}M_{t+1}=\xi-E[\xi|\mathcal{F}_{t}]$. For $(Y,Z,K)$ the solution of (\ref{kn-7}), as $f(t, z) = \inf_{\theta\in\Theta}\{z^*\theta_t\}$ is concave, we know from Theorem \ref{convexdriverRBSDE} that
\[Y_t= U_t \vee \inf_{\theta\in\Theta}\{z^*\theta_t+E[\xi|\mathcal{F}_{t}]\} = \bar U_t\]
as desired.
\end{proof}

By the above theorem and Proposition \ref{propositation fsbsde}
as well as other properties of RBSDEs, we can deduce the following useful
results:

\begin{enumerate}[(i)]
 \item $\bar{U}$ is the smallest $g$-supermartingale (for the driver $f(t, z) = \inf_{\theta\in\Theta}\{z^*\theta_t\}$) which dominates $U$;
\item $\bar{U}$ is the value process of the following
optimal stopping problem under ambiguity, i.e.
\[
\bar{U}_{t}=\sup_{\tau\in\mathcal{J}_{t}}\inf_{P\in\mathcal{B}}E^{P}[U_{\tau}|\mathcal{F}_{t}];
\]
\item an optimal stopping rule can be given by
\[
\tau^{*}=\inf\{t\geq0: \bar{U}_{t}=U_{t}\}.
\]
\end{enumerate}

Now we reconsider a simple example which was discussed in \cite{r16} and
\cite{r5}.

\begin{example}
\label{Exam1} Suppose the value process of an asset is governed by
\begin{equation}%
\begin{cases}
S_{t+1}-S_{t}=\mu S_{t}+\sigma S_{t}(M_{t+1}-M_{t})\\
S_{0}=s>0,
\end{cases}
\label{kn-8}%
\end{equation}
where $s,\mu\in \mathbb{R}^+$, $\sigma\in\mathbb{R}^m\setminus \{0\}$ are given constants. We want to find the optimal time
$\tau^\ast\in\{0,1,...,T\}$ to sell this asset.

We first suppose that there does not exist ambiguity and the risk only comes
from the martingale difference process. This problem can be formulated as
follows
\[
\sup_{0\leq\tau\leq T}E[S_{\tau}].
\]
From (\ref{kn-8}), we know
\[
E[S_{t+1}-S_{t}]=\mu E[S_{t}].
\]
Since $S_{0}=s>0$, we have $E[S_{1}]>S_{0}>0$. It is easy to see that
$E[S_{t+1}]\geq E[S_{t}]>0$. Thus, the optimal time is $\tau^{\ast}=T$, which
implies that the owner is better to hold this asset until the maturity time
$T$.

Now if there exists ambiguity, which can be described by a family of time-consistent
probability measures $\Lambda$ with associated set $\Theta = \{\theta: Q^\theta\in\Lambda\}$. Then an ambiguity averse decision maker wishes to solve
\[
\sup_{\tau}\inf_{\theta\in\Theta}E^{Q^{\theta}}[S_{\tau}].
\]
This problem solved by considering the following RBSDE,
\[
\begin{cases}
\bar{U}_{t}=\bar{U}_{t+1}-\mu \bar U_t + \inf_{\theta\in\Theta}\{Z_t^* \theta_t\}-Z_{t}^{\ast}M_{t+1}+K_{t+1}-K_{t}\\
\bar{U}_{T}=S_{T},\quad \bar{U}_{t}\geq S_{t}\\
(\bar{U}_{t}-S_{t})(K_{t+1}-K_{t})=0.
\end{cases}
\]
By Proposition \ref{propositation fsbsde}, we know
\[
\tau^{\ast}=\inf\{u\leq T;\quad \bar{U}_{u}=S_{u}\}\wedge T.
\]
Hence, ambiguity aversion can encourage ealier stopping.
\end{example}

\section{Applications to pricing of American contingent claims}

It is well-known that the price of an American option corresponds
to the solution of a reflected BSDE, where the information flow is generated
by the Brownian motion \cite{r11,BH}. In this section, we explore this pricing
problem in the discrete time and finite state cases, where the martingale
difference process replace the Brownian motion.

We begin with the classical set-up for discrete time asset
pricing: the basic securities consist of $m+1$ assets $\{S_{t}^{i}\}_{0\leq t\leq
T, i\in\{0,1,...,m\}}$, one of which is a non-risky asset with price process as
follows:
\[
S_{t+1}^{0}-S_{t}^{0}=r_{t}S_{t}^{0},
\]
where $r_{t}$ is the interest rate. The other $k$ risky asset (the stocks) are
traded discretely, of which the price process $S_{t}^{i}$ for one share of
$i$th stock is governed by the linear difference equation
\[
S_{t+1}^{i}-S_{t}^{i}=S_{t}^{i}\Big(b_{t}^{i}+\sum_{j=1}^{m}\sigma_{t}^{i,j}M_{t+1}^{j}\Big)
\]
where $M_{t}=(M_{t}^{1},M_{t}^{2},...,M_{t}^{m})^{\ast}$ is our martingale
difference sequence on $\mathbb{R}^{m}$.

We assume that
\begin{enumerate}[(i)]
 \item The short interest rate $r$ is a predictable process
which is generally nonnegative.
\item The stock appreciation rates $b=(b^{1},b^{2},...,b^{n})^{\ast}$ is a predictable process.
\item The volatility matrix $\sigma=(\sigma^{i,j})$ is a predictable process in $\mathbb{R}^{k\times m}$.
\item There exists a predictable $Q$-vector process $\theta$, called the risk premium, such that
\[
b_{t}-r_{t}\mathbf{1}=\sigma_{t}\theta_{t},\;dt\times dP-a.e..
\]
 where $\mathbf{1}$ is the vector whose every component is $1$. Denote by $\Theta$ the family of all such processes.
\end{enumerate}

We note that each $\theta\in\Theta$ corresponds to a measure where
\[E^{Q^\theta}[S^i_{t+1}-S_t^i|\mathcal{F}_t] = S_{t}^{i}\Big(b_{t}^{i}+\sum_{j=1}^{m}\sigma_{t}^{i,j}\theta_t^j\Big) = S_{t}^{i}r_t\]
and so $\Theta$ is a representation of the equivalent martingale measures of the discounted processes $S_s^i\prod_{s\leq t}(1+r_s)^{-1}$.

\begin{definition}
A predictable process $H=(H^{0},H^{1},...,H^{k})$ is called self-financing if $\langle H_t, S_{t-1}\rangle=\langle H_t, S_t\rangle$, where $S=(S^0, S^1, \ldots, S^k)$. The value $V$ of of the corresponding self-financing portfolio can be
formulated as follows (refer to \cite{r20}):
\[
V_{t}=H^{0}_tS_{t}^{0}+\sum_{i=1}^{k}H^{i}_tS_{t}^{i}=H^0_{t+1}S_{t}^{0}+\sum_{i=1}^{k}H^{i}_{t+1}S_{t}^{i}.
\]
So
\begin{equation}\label{dwe}
 \begin{split}
  V_{t+1}-V_{t}  &  =H^{0}_{t+1}(S_{t+1}^{0}-S_{t}^{0})+\sum_{i=1}^{k} H^{i}_{t+1}(S_{t+1}^{i}-S_{t}^{i})\\
&  =r_{t}V_{t}+\sum_{i=1}^{k}H^{i}_{t+1}S_{t}^{i}\Big(b_{t}^{i}-r_{t}+\sum_{j=1}^{m}\sigma_{t}^{i,j}M_{t+1}^{j}\Big)\\
&  =r_{t}V_{t}-z_t^*(\theta_{t}+ M_{t+1}),
 \end{split}
\end{equation}
where $z_t^j= -\sum_{i=1}^k H^{i}_{t+1}S_{t}^{i}\sigma_{t}^{i,j}$ and $\theta\in\Theta$.
\end{definition}
 We then have the following subreplication result.
\begin{theorem}
 Let $f(t,y,z) = -r y + \inf_{\theta\in\Theta} \{z^*\theta_t\}$. Then the solution $(Y,Z)$ to the BSDE with driver $f$ and terminal value $Y_T=\xi$ is equal to the largest subreplication price of the european contingent claim $\xi$.
\end{theorem}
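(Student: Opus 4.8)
The plan is to identify $Y$ with a discounted nonlinear expectation and then prove two matching inequalities: that $Y$ dominates the value of every subreplicating portfolio (so it is an upper bound for subreplication prices), and that $Y_0$ is itself attained by some self-financing subreplication (so it is the \emph{largest} such price). The representation of $Y$ as an infimum of martingale-measure expectations, already available from the $g$-expectation theory of Section 3, is the engine for both halves.

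First I would reinterpret the BSDE solution. Writing $\beta_t=\prod_{0\le s<t}(1+r_s)^{-1}$ for the discount factor, the linear term $-r_t y$ in $f$ is exactly what makes the discounted process $\tilde Y_t:=\beta_t Y_t$ solve a BSDE whose driver is the purely $z$-dependent concave map $z\mapsto\inf_{\theta\in\Theta}\{z^*\theta_t\}$. Since $\Theta$ represents the equivalent martingale measures of the discounted assets and (as in the $\kappa$-ignorance and scenario examples) is time consistent, Proposition \ref{BSDEinfexp} together with Theorem \ref{gexprepresentation} and Lemma \ref{timeconsistentequiv} yield $\tilde Y_t=\inf_{\theta\in\Theta}E^{Q^\theta}[\beta_T\xi\,|\,\mathcal F_t]$. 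A one-step computation using $E^{Q^\theta}[M_{t+1}|\mathcal F_t]=\theta_t$ then shows that $\tilde Y$ is a $Q^\theta$-submartingale for \emph{every} $\theta\in\Theta$ and a genuine $Q^{\theta^*}$-martingale for the process $\theta^*$ attaining the infimum in the driver at each step; these two facts drive everything.

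For the upper bound, let $H$ be any self-financing strategy whose value $V$ satisfies $V_T\le\xi$. Because $\Theta$ represents the martingale measures of the discounted assets, the discounted wealth $\tilde V$ is a $Q^\theta$-martingale for every $\theta\in\Theta$, so $\tilde V_t=E^{Q^\theta}[\tilde V_T|\mathcal F_t]\le E^{Q^\theta}[\beta_T\xi|\mathcal F_t]$; taking the infimum over $\theta$ and using the representation of $\tilde Y$ gives $\tilde V_t\le\tilde Y_t$, hence $V_t\le Y_t$ and in particular $V_0\le Y_0$. Thus no subreplication price can exceed $Y_0$. For achievability I would use the optimiser $\theta^*$: the discounted increment can be written $\tilde Y_{t+1}-\tilde Y_t=\beta_{t+1}Z_t^*(M_{t+1}-\theta^*_t)$, a $Q^{\theta^*}$-martingale increment whose conditional mean under every other $Q^\theta$ is nonnegative. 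I would split this increment into a marketed gain $G_t$ — the part produced by trading the discounted stocks, i.e. coming from the component of $Z_t$ in the row space of $\sigma_t$ — plus a nonnegative residual $\Delta A_t\ge0$. Taking $V_0=Y_0$ and letting $V$ generate the gains $G_t$ gives $\tilde V_t=\tilde Y_t-A_t\le\tilde Y_t$ with $A$ nondecreasing and $A_0=0$, so $\tilde V_T=\beta_T\xi-A_T\le\beta_T\xi$, exhibiting a self-financing subreplication with $V_0=Y_0$. Combined with the upper bound, this identifies $Y_t$ with the largest subreplication price at every $t$.

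The main obstacle is exactly this last decomposition: the control $Z_t$ produced by the BSDE need not be attainable, because the market is incomplete, so one cannot simply trade $Z_t$. What rescues the construction is that $\tilde Y$ is a submartingale \emph{simultaneously} under all $Q^\theta$, $\theta\in\Theta$; in the present finite-state, one-period-at-a-time setting this is a discrete optional decomposition, and the existence of an attainable $G_t$ with residual $\Delta A_t\ge0$ follows from linear-programming (Farkas) duality at each node, the sign of $\Delta A_t$ being governed by $\inf_{\theta\in\Theta}E^{Q^\theta}[\tilde Y_{t+1}-\tilde Y_t|\mathcal F_t]\ge0$. Showing that the residual is genuinely nonnegative and constructible as a bona fide self-financing strategy, rather than merely that the conditional means have the right sign, is the technical heart of the argument; everything else reduces to the martingale bookkeeping already set up in Section 3.
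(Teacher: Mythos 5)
Your argument is correct, but it takes a genuinely different and more self-contained route than the paper. The paper's proof is essentially a two-step citation: it invokes the standard discrete-time duality result from \cite{r20} to assert that the largest subreplication price is $\inf_{Q\in\Lambda}E^Q[\xi R_T/R_t\mid\mathcal{F}_t]$ over the equivalent martingale measures $\Lambda=\{Q^\theta:\theta\in\Theta\}$, and then identifies this infimum with the BSDE solution via Proposition \ref{BSDEinfexp}. You perform that last identification in the same way (after discounting, using positive homogeneity of $z\mapsto\inf_\theta\{z^*\theta_t\}$ so that $\tilde Y_t=\beta_tY_t$ solves the translation-invariant BSDE), but you replace the cited duality with a direct proof: the upper bound via the $Q^\theta$-martingale property of discounted self-financing wealth, and attainability via a node-by-node optional decomposition of the universal submartingale $\tilde Y$ using Farkas/LP duality. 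What your approach buys is transparency about where incompleteness enters --- the $Z$ from the BSDE need not be marketed, and the residual increasing process $A$ is exactly the gap --- and it makes explicit the role of time-consistency of $\Theta$, which the paper leaves implicit. What it costs is that the decomposition step, which you correctly flag as the technical heart, is only sketched; in this finite-state one-period-at-a-time setting the Farkas argument is routine, but writing it out is precisely the content of the ``standard duality results'' the paper outsources to \cite{r20}, so your proof is not complete until that lemma is proved or cited. One further caution: you rely on $E^{Q^\theta}[M_{t+1}\mid\mathcal{F}_t]=\theta_t$ making discounted wealth a $Q^\theta$-martingale; the paper's sign conventions in assumption (iv) and equation (\ref{dwe}) are not mutually consistent on this point, so you should fix a convention explicitly rather than inherit the ambiguity.
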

\begin{proof}
Let $R_t=\prod_{s\leq t}(1+r_s)^{-1}$. By standard duality results (see \cite{r20}), we know that the largest subreplication value of $\xi$ is given by $\inf_{Q\in\Lambda}E^Q[\xi R_T/R_t|\mathcal{F}_t]$, where $\Lambda$ is the family of equivalent martingale measures for the discounted processes $S_t^iR_t$. By construction $\Lambda = \{Q^\theta: \theta\in\Theta\}$, and so by Proposition \ref{BSDEinfexp}, this price is given by the solution to the stated BSDE.
\end{proof}
\begin{remark}
In a similar way, we can obtain the minimal superreplication price as the solution of the BSDE with driver $f(t,y,z) = -r y + \sup_{\theta\in\Theta} \{z^*\theta_t\}$, by considering subreplication of $-\xi$. Note that (\ref{dwe}) then shows that, as expected, the self-financing portfolios have the same subreplication and superreplication prices, as (\ref{dwe}) holds for all $\theta\in\Theta$.
\end{remark}
% Note that the equation (\ref{dwe}) satisfies the assumptions of the existence
% and uniqueness Theorem. So for any given contingent claim $\xi$, there exists
% a unique solution $(V_{t},\pi_{t})$ for (\ref{dwe}).

Let us consider the valuation problem of an American contingent
claim with possible payoffs $\{\xi_{t}\}_{0\leq t\leq T}$. The holder can exercise only once,  at a stopping time time $\tau \in\{0,1,...,T\}$. For notational convenience, we define
\[\begin{split}
\bar f(t,y,z) &= -r y + \sup_{\theta\in\Theta} \{z^*\theta_t\}, \\
\underline f(t, y, z) &= -r y + \inf_{\theta\in\Theta} \{z^*\theta_t\},
  \end{split}
\]
and we assume that this $\sup$ and $\inf$ are attained.

It is well known that this kind of claim cannot
be replicated by a self-financing portfolio, and that it is necessary to introduce
self-financing super-strategies with a cumulative consumption process.

\begin{definition}
A self-financing super-strategy is a vector process $(V,H,K),$ where $V$ is
the value process, $H$ is the portfolio process and $K$ is the cumulative
consumption process, such that
\[
V_{t+1}-V_{t}=\langle H_t, S_{t+1}-S_t\rangle -(K_{t+1}-K_{t}),
\]
where $K$ is an increasing adapted process with $K_{0}=0$. Equivalently, it is a process such that $V_{t} \geq E^Q[V_{t+1}|\mathcal{F}_t]$ for all $Q\in\Lambda$. If $-V$ is a super-strategy, then we say that $V$ is a sub-strategy.
\end{definition}

\begin{definition}
Given a payoff process $\{\xi_{t}\}_{t\in\{0,1,...,T\}}$, a
super-strategy is called a superreplication strategy if
\[
V_{t}\geq\xi_{t} \text{ for all }t\in\{0,1,...,T\},P-a.s.
\]
in which case $V$ is called a superreplication price. The value $\inf V_t$, where the infimum is taken over all superreplication prices, is called the minimal superreplication price.

In a similar way, with the inequality reversed, we define sub-replication strategies (which are sub-strategies with $V_{t}\geq\xi_{t}$ for all $t$) and the maximal subreplication price.
\end{definition}

\begin{theorem}
Let $(Y,Z, K)$ be the solution to the RBSDE with driver $\bar f$, terminal value $\xi_T$ and lower barrier $\xi_t$. Then $Y_t$ is equal to the smallest superreplication price of the American contingent claim with payoff $\{\xi_t\}$. Similarly, the RBSDE with driver $\underline f$ yields the largest subreplication price for the claim.
\end{theorem}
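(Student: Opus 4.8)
The plan is to identify the value processes of self-financing super-strategies with the $\bar f$-supermartingales and then to read the result off from the extremal characterisation of RBSDE solutions. Write $\bar G(\cdot|\mathcal{F}_t)$ for the one-step valuation operator attached to $\bar f$, so that $\bar G(\eta|\mathcal{F}_t)$ is the solution $v$ of $v=\bar f(t,v,Z)+E[\eta|\mathcal{F}_t]$ with $Z^{*}M_{t+1}=\eta-E[\eta|\mathcal{F}_t]$. Using the basic calculation $E_{Q^{\theta}}[\eta|\mathcal{F}_t]=E[\eta|\mathcal{F}_t]+Z^{*}\theta_t$ and the maximal version of the European replication theorem (together with Proposition \ref{BSDEinfexp} applied to $\Lambda=\{Q^{\theta}:\theta\in\Theta\}$, the one-period discount $1/(1+r_t)$ being carried by the $-ry$ term), one has $\bar G(\eta|\mathcal{F}_t)=\sup_{Q\in\Lambda}E^{Q}[\eta/(1+r_t)|\mathcal{F}_t]$. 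Since $y\mapsto y-\bar f(t,y,z)=(1+r_t)y-\sup_{\theta}z^{*}\theta$ is strictly increasing and continuous ($r\geq 0$) and $\bar f$ respects $\sim_M$, Theorem \ref{Existence Uniqueness} gives a unique RBSDE solution $(Y,Z,K)$ with driver $\bar f$, terminal value $\xi_T$ and lower barrier $\xi_t$.

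The first step is the equivalence: a process $V$ is the value of a self-financing super-strategy if and only if it is a $\bar f$-supermartingale, i.e. $V_t\geq\bar G(V_{t+1}|\mathcal{F}_t)$ for all $t$. This is immediate from the definition of a super-strategy, whose alternative formulation requires exactly that $V_t\geq E^{Q}[V_{t+1}|\mathcal{F}_t]$ for every $Q\in\Lambda$ (again with the discounting absorbed into the $-ry$ term); taking the supremum over $Q\in\Lambda$ turns this into $V_t\geq\bar G(V_{t+1}|\mathcal{F}_t)$. Hence a superreplication price is precisely a $\bar f$-supermartingale $V$ with $V_t\geq\xi_t$ for all $t$, and the minimal superreplication price is the smallest such process.

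It then remains to show that $Y$ is the smallest $\bar f$-supermartingale dominating $\xi$. That $Y$ is a superreplication price is clear: the RBSDE reads $Y_t=Y_{t+1}+\bar f(t,Y_t,Z_t)-Z_t^{*}M_{t+1}+(K_{t+1}-K_t)$ with $K$ increasing, so applying $\bar G(\cdot|\mathcal{F}_t)$ and discarding the nonnegative increment $K_{t+1}-K_t$ (monotonicity, by the comparison theorem, Theorem \ref{Comparison Theorem}) yields $Y_t\geq\bar G(Y_{t+1}|\mathcal{F}_t)$, while the barrier condition gives $Y_t\geq\xi_t$. For minimality, let $V$ be any superreplication price and argue backwards in time: $V_T\geq\xi_T=Y_T$, and if $V_{t+1}\geq Y_{t+1}$ then monotonicity of $\bar G$ gives $V_t\geq\bar G(V_{t+1}|\mathcal{F}_t)\geq\bar G(Y_{t+1}|\mathcal{F}_t)$, whence $V_t\geq\max\{\xi_t,\bar G(Y_{t+1}|\mathcal{F}_t)\}$. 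The one-step relation for RBSDE solutions (Proposition \ref{proposiztation solution}, with $\bar f(t,Y_t,Z_t)$ taken as the fixed driver term and the resulting implicit equation in $Y_t$ solved) identifies the right-hand side with $Y_t$, so $V_t\geq Y_t$. Thus $Y_t$ is the minimal superreplication price.

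For the subreplication statement I would run the same argument with the worst case $\sup_{Q}$ replaced by the best case $\inf_{Q}$, equivalently $\bar f$ replaced by the concave driver $\underline f$ and $\bar G$ by $\underline G(\cdot|\mathcal{F}_t)=\inf_{Q\in\Lambda}E^{Q}[\cdot/(1+r_t)|\mathcal{F}_t]$; together with the optimal-stopping representation of Proposition \ref{propositation fsbsde} this identifies the RBSDE solution with driver $\underline f$ and lower barrier $\xi_t$ as $\sup_{\tau}\inf_{Q\in\Lambda}E^{Q}[\text{discounted }\xi_\tau|\mathcal{F}_t]$, which by the corresponding duality is the largest subreplication price. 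The delicate point throughout is the identification in the first paragraph, namely that the operator built from the driver $\bar f$ genuinely equals $\sup_{Q\in\Lambda}E^{Q}[\cdot/(1+r_t)|\mathcal{F}_t]$ once the discount factor hidden in $-ry$ is accounted for; here the dynamics \eqref{dwe} (which hold for every $\theta\in\Theta$ simultaneously) and the representation theorem (Theorem \ref{gexprepresentation}) do the work, and one must check that the super-strategy definition is applied consistently with discounting. Granting this correspondence, the two envelope inequalities are a routine backward induction and the theorem follows.
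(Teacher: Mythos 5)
Your proof is correct and its first half coincides with the paper's: both establish that $Y$ is a superreplication price by reading off the one-step dynamics $Y_t-(K_{t+1}-K_t)=\sup_{Q\in\Lambda}E^Q[Y_{t+1}R_{t+1}/R_t\,|\,\mathcal{F}_t]$ and using that $K$ is increasing, and both rest on the same duality identification of $\Lambda=\{Q^\theta:\theta\in\Theta\}$ with the equivalent martingale measures (your care about where the discount factor $1/(1+r_t)$ hides in the $-ry$ term is warranted, since the paper's ``equivalently'' clause in the super-strategy definition omits it). Where you diverge is the minimality step. The paper takes an arbitrary superreplication price $Y'$, writes its dynamics via \eqref{dwe} as an RBSDE-type equation with an increasing consumption $K'$, and invokes the comparison theorem for RBSDEs to conclude $Y'_t\geq Y_t$; strictly speaking $(Y',z',K')$ is not an RBSDE solution because it need not satisfy the Skorohod condition $(Y'-\xi)(K'_{t+1}-K'_t)=0$, though the comparison proof never uses that condition for the dominating process, so the argument survives. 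You instead run a backward induction on the Bellman relation $Y_t=\xi_t\vee\bar G(Y_{t+1}|\mathcal{F}_t)$ (obtained from Proposition \ref{proposiztation solution} by solving the implicit one-step equation) together with monotonicity of $\bar G$: this is more elementary, avoids any appeal to the RBSDE comparison theorem outside its stated hypotheses, and makes explicit that $Y$ is the smallest $\bar f$-supermartingale dominating the obstacle. The trade-off is that your route leans on the ``if'' direction of the super-strategy/$\bar f$-supermartingale equivalence (attainability of the $\sup_Q$ dynamics by an actual portfolio plus consumption), which, like the paper, you ultimately delegate to the standard discrete-time superhedging duality of \cite{r20}. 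Both arguments are sound; yours is the cleaner of the two on the minimality half.
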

\begin{proof}
We consider the superreplication price only, the subreplication price is similar. Recall from Proposition \ref{propositation fsbsde} that if $(Y, Z, K)$ is the solution of the RBSDE with driver $f$ and lower barrier $\xi_\tau$, then
\begin{equation}\label{eqYAmerican}
Y_t = \sup_{\tau} E\Big[\sum_{t\leq s< \tau} \bar f(s, Y_s, Z_s) + \xi_\tau\Big|\mathcal{F}_t\Big].
\end{equation}
The one-step dynamics for $Y$ are
\[\begin{split}
   Y_t-(K_{t+1}-K_t) &= Y_{t+1} + f(t, Y_t, Z_t) - Z_t^* M_{t+1}\\
&= Y_{t+1} -r_t Y_t + \sup_{\theta\in\Theta} \{Z_t^* \theta_t\} - Z_t^* M_{t+1}\\
&= \sup_{Q\in\Lambda} E^Q[Y_{t+1}R_{t+1}/R_t|\mathcal{F}_t].
  \end{split}\]
As $K$ is an increasing process we see that $Y$ corresponds to a super-strategy, and so $Y_t$ is a superreplication price for $\xi$. Conversely, if $Y'$ is a superreplication price for $\{\xi_t\}$, then from (\ref{dwe}) there would exist a process $z'$ and an increasing consumption process $K'$ such that
\[Y'_{t+1}-Y'_{t}  =r_{t}Y'_{t}-(z_t')^*(\theta_{t}+ M_{t+1}) + K'_{t+1}-K'_t\]
and $Y'_t\geq \xi_t$. By the comparison theorem for RBSDEs, this implies $Y'_t\geq Y_t$, so $Y$ is the minimal superreplication price for $\{\xi_t\}$.
\end{proof}

\begin{remark}
By replacing $\xi_t$ with $-\xi_t$, we can consider the perspective of the seller of a claim, who will have to pay out when it is exercised. After changing sign again, this results in the supremum over $\tau$ in (\ref{eqYAmerican}) being replaced with an infimum, as the seller cannot control the exercise time of the option.
\end{remark}

\textbf{Acknowledgements} The authors would like to thank Prof. Shige Peng and
Frank Riedel for some useful conversations in the stochastic game workshop in
China (November 2012).

\bigskip

\end{document}